\newcommand\e\varepsilon
\newcommand\R{\mathbb R}
\newcommand\de\partial
\newcommand\weakto\rightharpoonup
\renewcommand\le\leqslant
\renewcommand\ge\geqslant
\renewcommand\a\alpha
\renewcommand\b\beta
\renewcommand\d\delta
\newcommand\vfi\varphi
\newcommand\g\gamma
\newcommand\gb\gamma
\renewcommand\l\lambda
\newcommand\n\nabla
\newcommand\s\sigma
\renewcommand\t\theta
\renewcommand\O\S
\newcommand\G\Gamma
\renewcommand\S\Sigma
\renewcommand\L\Lambda
\renewcommand\o\S
\def\bbm[#1]{\text{\boldmath $#1$}}
\newcommand\beq{\begin{equation}}
\newcommand\eeq{\end{equation}}
\renewcommand\leq{\leqslant}
\newtheorem{theorem}{Theorem}[section]
\newtheorem{lemma}[theorem]{Lemma}
\newtheorem{definition}[theorem]{Definition}
\newtheorem{proposition}[theorem]{Proposition}
\newtheorem{remark}[theorem]{Remark}
\def\sideremark#1{\ifvmode\leavevmode\fi\vadjust{\vbox to0pt{\vss
\hbox to0pt{\hskip\hsize\hskip1em
\vbox{\hsize3cm\tiny\raggedright\pretolerance10000
\noindent #1\hfill}\hss}\vbox to8pt{\vfil}\vss}}}
\definecolor{edu}{rgb}{0,1,0.2}
\numberwithin{equation}{section}
\title[Bound  and ground states for an elliptic system with double criticality]
{Existence of bound and ground states for an elliptic system with double criticality}
\keywords{Systems of elliptic equations, Variational methods, Ground states, Bound states, Compactness principles, Critical Sobolev, Hardy Potential, Doubly critical problems.}%
\subjclass[2010]{Primary  35J47, 35J50, 35J60,  35Q55}
\author{Eduardo Colorado, Rafael L\'opez-Soriano, Alejandro Ortega}
\email[Eduardo Colorado ]{ecolorad@math.uc3m.es}%
\email[Rafael López-Soriano]{ralopezs@math.uc3m.es}%
\email[Alejandro Ortega ]{alortega@math.uc3m.es}
\address[E. Colorado, R. López-Soriano, A. Ortega]{Departamento de Matem\'aticas,
Universidad Carlos III de Madrid, Av. Universidad 30, 28911 Legan\'es (Madrid), Spain}
\begin{document}
\maketitle

\begin{center}
{\it   Dedicated to Ireneo Peral in memoriam}
\end{center}

\begin{abstract}
We study the existence of bound and ground states for a class of nonlinear elliptic systems in $\R^N$. These equations involve critical power nonlinearities and Hardy-type singular potentials, coupled by a term containing up to critical powers. More precisely, we find ground states either the positive coupling parameter $\nu$ is large or $\nu$ is small under suitable assumptions on the other parameters of the problem. Furthermore, bound states are found as Mountain--Pass-type critical points of the underlying functional constrained on the Nehari manifold. Our variational approach improves some known results and allows us to cover range of parameters which have not been considered previously.

\end{abstract}

\section{Introduction}
\setcounter{equation}0

\

The aim of this paper is to study a system of coupled nonlinear Schr\"ordinger equations, typically known as Gross-Pitaevskii,
\begin{equation}\label{GPsystem}
\left\{\begin{array}{ll}
\displaystyle{-i \frac{\partial}{\partial t} \Psi_1=  \Delta \Psi_1 -a_1(x) \Psi_1 + \mu_1 |\Psi_1|^2\Psi_1 +\nu |\Psi_2|^2 \Psi_1 }  & \text{in }\mathbb{R}^N, \quad t>0, \vspace{.3cm}\\
\displaystyle{-i \frac{\partial}{\partial t} \Psi_2=  \Delta \Psi_2  -a_2(x) \Psi_2 + \mu_2 |\Psi_2|^2\Psi_2 +\nu |\Psi_1|^2 \Psi_2 }  & \text{in }\mathbb{R}^N, \quad t>0,  \vspace{.3cm}\\
\displaystyle{ \Psi_j=\Psi_j(x,t)\in\mathbb{C}}, & \mbox{ for } j=1,2,     \vspace{.3cm}\\
\displaystyle{ \Psi_j(x,t)\to 0}, \qquad \mbox{ as } |x|\to+\infty,  &  \quad t>0,
\end{array}\right.
\end{equation}
where $a_1,a_2$ are real potenctials, $\mu_1,\mu_2>0$ and $\nu\neq 0$ is the so-called coupling parameter. Problem \eqref{GPsystem} arises in several physical contexts, such as the Hartree-Fock theory for a double condensate, namely a binary mixture Bose--Einstein condensates in two different hyperfine states. Here $\Psi_j$ represents the respective condensate amplitudes, the constants $\mu_j$ the interspecies lengths and the sign of $\nu$ determines the interaction between the states. See \cite{Esry, Frantz} for further details.

System \eqref{GPsystem} also appears in nonlinear optics. More precisely, it allows us to study the propagation of pulses in birefringnent optical fibers and the beam in Kerr-like photorefracive media. For more information, we refer the reader to \cite{Akhmediev,Kivshar} and references therein.

\

Let us focus on solitary-wave solutions of system \eqref{GPsystem}, namely, solutions $\Psi_1(x,t)= e^{i \lambda_1 t}u(x) $ and $\Psi_2(x,t)= e^{i \lambda_2 t}v(x)$, with $u,v$ solving
\beq\label{BSsystem}
\left\{\begin{array}{ll}
\displaystyle{-\Delta u + V_1(x) u= \mu_1 u^{3} + \nu u v^{2}}  &\text{in }\mathbb{R}^N,\vspace{.3cm}\\
-\Delta v + V_2(x) v = \mu_2 v^{3} + \nu u^{2}v &\text{in }\mathbb{R}^N,
\end{array}\right.
\eeq
where $V_1(x)=a_1(x)+\lambda_1$  and $V_2(x)=a_2(x)+\lambda_2$ with respect to \eqref{GPsystem}. This problem is typically known in the literature as the Bose-Einstein condensate system. Actually, system \eqref{BSsystem} can be seen as a particular case of the following extension
\beq\label{pBSsystem}
\left\{\begin{array}{ll}
\displaystyle{-\Delta u + V_1(x) u= \mu_1 u^{2p-1} + \nu u^{p-1} v^{p}}  &\text{in }\mathbb{R}^N,\vspace{.3cm}\\
-\Delta v + V_2(x) v = \mu_2 v^{2p-1} + \nu u^{p}v^{p-1} &\text{in }\mathbb{R}^N,
\end{array}\right.
\eeq
where $1<p\le\frac{N}{N-2}$ with $N\ge 3$. In the subcritical regime, namely $p<\frac{N}{N-2}$, the question of the existence and multiplicity of solutions for \eqref{pBSsystem} has been extensively analyzed under some assumptions on $V_j$ and $\nu$, see \cite{AC2,ACR, BW, LinWei, LW, MMP, POMP, SIR}, among others.

Regarding the critical case, $p=\frac{N}{N-2}$, if one takes $V_j$ as non-zero constants, due to a Pohozaev--type identity, system \eqref{pBSsystem} admits only the trivial solution $(u,v)=(0,0)$ . Hence, in this paper we shall assume the potential as a Hardy-type one $V_j=-\frac{\lambda_j}{|x|^2}$. This kind of potentials appears also in several physical models, for instance in nonrelativistic quantum mechanics, molecular physics or combustion models. Notice that for \eqref{pBSsystem} with $V_j=-\frac{\lambda_j}{|x|^2}$ the non-trivial solutions are singular around the point $x=0$.

Concerning systems with singular potentials, we point out the following Hamiltonian system introduced in \cite{FigPerRos},
\begin{equation*}
\left\{\begin{array}{ll}
\displaystyle -\Delta u = \frac{v^p}{|x|^{\alpha}}   &\text{in }\Omega,\vspace{.3cm}\\
\displaystyle -\Delta v =  \frac{u^q}{|x|^{\beta}} &\text{in }\Omega,\vspace{.3cm}\\
\end{array}\right.
\end{equation*}
where $\Omega\subset \mathbb{R}^N$ is a bounded domain and some conditions on $p,q,\alpha,\beta$ are imposed. See also \cite{BVG} where some estimates on the singularity of the solutions near the origin where obtained previously for Hamiltonian systems with absorption terms.

\

In this work, we will consider the problem
\beq\label{system:alphabeta}
\left\{\begin{array}{ll}
\displaystyle -\Delta u - \lambda_1 \frac{u}{|x|^2}-u^{2^*-1}= \nu \alpha h(x) u^{\alpha-1}v^\beta   &\text{in }\mathbb{R}^N,\vspace{.3cm}\\
\displaystyle -\Delta v - \lambda_2 \frac{v}{|x|^2}-v^{2^*-1}= \nu \beta h(x) u^\alpha v^{\beta-1} &\text{in }\mathbb{R}^N,\vspace{.3cm}\\
u,v> 0 & \text{in }\mathbb{R}^N\setminus\{0\},
\end{array}\right.
\eeq
where $\lambda_1,\lambda_2\in(0,\Lambda_N)$ with $\Lambda_N=\frac{(N-2)^2}{4}$ the best constant in the Hardy's inequality, $2^*=\frac{2N}{N-2}$ is the critical Sobolev exponent, $\nu>0$, $\alpha,\beta$ are real parameters and $h$ is a certain function defined in $\mathbb{R}^N$. In particular, we shall assume
\beq\label{alphabeta}\tag{$\alpha\beta$}
\alpha, \beta> 1 \qquad \mbox{ and } \qquad  \alpha+\beta\le2^*,
\eeq
and
\beq\label{H}\tag{H}
h \mbox{ is a positive $ L^\infty(\mathbb{R}^N) $ function. }
\eeq

Note that system \eqref{system:alphabeta} is related to \eqref{pBSsystem} with $\mu_j=1$ for $j=1,2$ and the potentials are of Hardy kind ones.

\

Our goal is to derive the existence of positive solutions to the system \eqref{system:alphabeta}. We shall obtain the main results of the work by means of variational methods. More precisely, we will look for solutions as critical points of the associated energy functional
\begin{equation}\label{functalphabeta}
\begin{split}
\mathcal{J}_\nu (u,v)=&\frac{1}{2} \int_{\mathbb{R}^N} \left( |\nabla u|^2 + |\nabla v|^2  \right) \, dx -\frac{\lambda_1}{2} \int_{\mathbb{R}^N} \dfrac{u^2}{|x|^2} \, dx   -\frac{\lambda_2}{2} \int_{\mathbb{R}^N} \dfrac{v^2}{|x|^2} \, dx  \vspace{0,7cm} \\
&- \frac{1}{2^*} \int_{\mathbb{R}^N} \left( |u|^{2^*} + |v|^{2^*}  \right) \, dx  -\nu \int_{\mathbb{R}^N} h(x) |u|^\alpha |v|^{\beta} \, dx  ,
\end{split}
\end{equation}
defined in the the Sobolev--based space  $\mathbb{D}=\mathcal{D}^{1,2} (\mathbb{R}^N)\times \mathcal{D}^{1,2}(\mathbb{R}^N)$. Here we consider the space $\mathcal{D}^{1,2}(\mathbb{R}^N)$ as the completion of $C^{\infty}_0(\mathbb{R}^N)$ under the norm
$$
\|u\|_{\mathcal{D}^{1,2}(\mathbb{R}^N)}=\left(\int_{\mathbb{R}^N} \, |\nabla u|^2  \, dx  \right)^{1/2}.
$$

A crucial point in our analysis will be the played by the \textit{semi-trivial} solutions, namely couples of solutions with a trivial component. Observe that for any $\nu \in \mathbb{R}$, problem \eqref{system:alphabeta} admits two \textit{semi-trivial} positive solutions $(z_1,0)$ and $(0,z_2)$, where $z_j$ satisfies the entire problem
$$
-\Delta z_j - \lambda_j \frac{z_j}{|x|^2}=z_j^{2^*-1} \qquad \mbox{ and } \qquad z_j>0 \qquad \mbox{ in } \mathbb{R}^N\setminus \{ 0\},
$$
for $j=1,2$. The explicit expression of $z_j$ was found by Terracini in \cite{Terracini}, which is recalled in Section~\ref{section2} joint with several energy properties.

\

A characterization of the \textit{semi-trivial} solutions as critical points of the energy functional is provided by Abdellaoui, Felli and Peral (cf. \cite{AbFePe}). In fact, the authors prove that the couples $(z_1,0)$ and $(0,z_2)$ become either a local minimum or a saddle point of $\mathcal{J}_\nu$ on the corresponding Nehari manifold (see Section~\ref{section2}) under some hypotheses on the parameters $\nu,\alpha,\beta$. This classification will be crucial in order to study the geometry of the functional $\mathcal{J}_\nu$ and to obtain some energy estimates, which will allow us to deduce existence of solutions. For that reason, we recall the classification in Proposition~\ref{thmsemitrivialalphabeta}.

The coupling parameter $\nu$ and the exponents $\alpha,\beta$ dramatically affect the behavior of the functional $\mathcal{J}_\nu$. In particular, if $\nu$ is large enough, one can find a positive ground state (cf. \cite{AbFePe}), that we also include in Theorem~\ref{thm:nugrande} to have a complete picture of the results. Conversely, if $\alpha,\beta\ge 2$ and $\nu$ is small enough, the ground states correspond to the \textit{semi-trivial} solutions (cf. \cite{AbFePe}). Moreover, under this assumption, the functional $\mathcal{J}_\nu$ restricted on the Nehari manifold $\mathcal{N}_\nu$ exhibits a Mountain--Pass geometry. Since the Palais-Smale (PS for short) condition is satisfied under certain hypotheses on $h$ and $\alpha,\beta$, the existence of bound states is automatically guaranteed.

The critical case, $\alpha+\beta=2^*$, was widely analyzed by Chen and Zou in \cite{ChenZou}. Among other questions, they establish that the lower bound of $\mathcal{J}_\nu$ on $\mathcal{N}_\nu$ is not reached for $\nu$ strictly negative. Moreover, they find existence of radial ground states for $h(x)=1$ under different premises on $\alpha,\beta$ and $\lambda_1,\lambda_2$, as well as they describe the limit energy level of the solutions as $\nu\to0$. The case of a sign-changing potential was studied in \cite{ZhongZou} under somehow optimal hypotheses on the potential $h$.

We point out that the case in which one of the exponents $\alpha,\beta$ may be $1$ is considered in a forthcoming paper, \cite{CoLSOr}. For that setting, one of the \textit{semi-trivial} couples is no longer solution of \eqref{system:alphabeta}. Indeed, the authors focus on the special case in which $\alpha=2$ and $\beta=1$. This configuration appears in the so-called Schr\"odinger-Korteweg-de Vries which models some phenomena in fluid mechanics, see \cite{AA, Col, DFO, FO} and references therein for more details.

To the best of our knowledge, the cases in which $1<\alpha<2\leq \beta$ and $\lambda_2<\lambda_1$ or $1<\beta<2\leq \alpha$ and $\lambda_1<\lambda_2$ are not considered by the previous works. Hence, the main goal of this work is to cover these ranges.

In case that $1<\beta<2\leq \alpha$ and $\nu$ is small enough, the couple $(0,z_2)$ becomes a local minimum and $(z_1,0)$ is a saddle point of $\mathcal{J}_\nu$. The order between the parameters $\lambda_1$ and $\lambda_2$ determines the order between the \textit{semitrivial} energy levels. Indeed, if $\lambda_2>\lambda_1$ with $\nu$ small enough, the couple $(0,z_2)$ corresponds to the ground state of \eqref{system:alphabeta}, see Theorem~\ref{thm:groundstatesalphabeta}. Assuming that the parameters are somehow closed, it is proved that the energy functional exhibits a Mountain--Pass geometry and, consequently, a positive bound state is found. We refer to Theorem~\ref{MPgeom} for further details. Analogous conclusions are obtained in case that \break $1<\alpha<2\leq\beta$ and $\lambda_1>\lambda_2$.

The main ingredient in our approach is the application of an algebraic lemma in the component whose exponent is greater than $2$, see Lemma~\ref{algelemma}. More precisely, this result allows us to derive lower bounds on integral terms and, consequently, to prove the critical mass of the underlying component must not vanish. For that reason, under suitable hypotheses on $\nu$, we find ground and bound states supposing that $\max \{\alpha,\beta\}\ge 2$.

\

As commented previously, to establish the existence results, first we need to guarantee some compactness properties. This feature is given by a PS condition, relying on the well known \textit{concentration-compactness principle}, cf. \cite{Lions1,Lions2}. In this context, we shall deal with the lack of compactness of the embedding of the space $\mathcal{D}^{1,2}(\mathbb{R}^N)$ in $L^{2^*}(\mathbb{R}^N)$. Then, the nonlinear coupling term, $u^\alpha v^\beta$, may be critical depending on the values of the exponents. Indeed, we will consider two different cases between the subcritical exponents ($\alpha+\beta< 2^*$) and the critical ones ($\alpha+\beta= 2^*$). If $\alpha+\beta< 2^*$, compactness follows by standard embedding, whereas if $\alpha+\beta=2^*$ the issue is more delicate and we need a careful analysis and extra assumptions on $h$.

\

Organization of the paper: In Section \ref{section2}, we introduce notation, we give the definitions of bound and ground states, we introduce the Nehari Manifold and we show the character of the semi-trivial solutions.
Section \ref{section:PS} is devoted to prove the PS condition in both regimes, the subcritical and critical ones. In Section \ref{section:main} we show the main results on the existence of bound and ground states of \eqref{system:alphabeta}.

\

\section{Preliminaries and Functional setting}\label{section2}

In this section, we present a suitable variational setting for the system \eqref{system:alphabeta}. Actually, the problem \eqref{system:alphabeta} is the Euler-Lagrange system for the energy functional $\mathcal{J}_\nu$, introduced in \eqref{functalphabeta}, defined in the product space $\mathbb{D}=\mathcal{D}^{1,2} (\mathbb{R}^N)\times \mathcal{D}^{1,2} (\mathbb{R}^N)$. The energy space $\mathbb{D}$ is equipped with the norm
\begin{equation*}
\|(u,v)\|^2_{\mathbb{D}}=\|u\|^2_{\lambda_1}+\|v\|^2_{\lambda_2},
\end{equation*}
where
\begin{equation*}
\|u\|^2_{\lambda}=\int_{\mathbb{R}^N} |\nabla u|^2 \, dx - \lambda \int_{\mathbb{R}^N} \frac{u^2}{|x|^2} \, dx.
\end{equation*}

\

Let us point out that, because of the Hardy's inequality,
\begin{equation}\label{hardy_inequality}
\Lambda_N \int_{\R^N} \dfrac{u^2}{|x|^2} \, dx \leq \int_{\R^N} |\nabla u|^2 \, dx,
\end{equation}
the norm $\|\cdot\|_{\lambda}$ is equivalent to the norm $\|\cdot\|_{\mathcal{D}^{1,2} (\mathbb{R}^N)}$ for any $\lambda\in (0,\Lambda_N)$, where $\Lambda_N=\frac{(N-2)^2}{4}$ is the optimal constant in the Hardy inequality.

\

As mentioned above, the particular case of a single equation has been extensively studied. In particular, if either the system is decoupled, namely $\nu=0$, or some component vanishes, $u$ or $v$ satisfies the entire equation
\beq\label{entire}
-\Delta z - \lambda \frac{z}{|x|^2}=z^{2^*-1} \qquad\mbox{ and }   \qquad z>0\ \mbox{ in } \mathbb{R}^N \setminus \{0\}.
\eeq
A complete classification of \eqref{entire} was carried out by Terracini in \cite{Terracini}. Indeed, it is proved that if $\lambda\in\left(0,\Lambda_N\right)$, then the solutions of the problem \eqref{entire} takes the expression
\beq\label{zeta}
z_\mu^{\lambda}(x)= \mu^{-\frac{N-2}{2}}z_1^{\lambda}\left(\frac{x}{\mu}\right) \qquad \mbox{ with } \qquad z_1^{\lambda}(x)=\dfrac{A(N,\lambda)}{|x|^{a_\lambda}\left(1+|x|^{2-\frac{4a_\lambda}{N-2}}\right)^{\frac{N-2}{2}}},
\eeq
where $a_\lambda=\frac{N-2}{2}-\sqrt{\left( \frac{N-2}{2}\right)^2-\lambda}$, $A(N,\lambda)=\frac{N(N-2-2a_\lambda)^2}{N-2} $ and $\mu>0$ is a scaling factor. Solutions of \eqref{entire} arise as minimizers of the underlying Rayleigh quotient
\beq\label{Slambda}
\mathcal{S}(\lambda)= \inf_{\substack{u\in \mathcal{D}^{1,2}(\mathbb{R}^N)\\
u\not\equiv0}}\frac{\|u\|^2_{\lambda}}{\|u_\mu^\lambda\|_{2^*}^{2}}= \frac{\|z_\mu^\lambda\|^2_{\lambda}}{\|z_\mu^\lambda\|_{2^*}^{2}}= \left(1-\frac{4\lambda}{(N-2)^2} \right)^{\frac{N-1}{N}}\mathcal{S},
\eeq
where $\mathcal{S}$ denotes the optimal constant in Sobolev's inequality
\begin{equation}\label{sobolev_inequality}
\mathcal{S}\int_{\R^N} |u|^{2^*}dx \leq \int_{\R^N} |\nabla u|^2dx.
\end{equation}

By explicit computations, it holds that
\beq\label{normcrit}
\displaystyle  \|z_\mu^\lambda\|_{\lambda}^{2} = \|z_\mu^\lambda\|_{L^{2^*}}^{2^*}= \mathcal{S}^{\frac{N}{2}}(\lambda).
\eeq

Therefore, for every $\mu>0$ the couples $(z_\mu^{\lambda_1},0)$ and $(0,z_\mu^{\lambda_2})$ satisfy \eqref{system:alphabeta}. From now on, we will refer to these kind of solutions as \textit{semi-trivial} solutions. Our main objective is to look for solutions neither \textit{semi-trivial} nor trivial, i.e., couples of solutions $(u,v)$ such that $u\not\equiv 0$ and $v\not\equiv 0$ in $\mathbb{R}^N$.
\begin{definition}
We say that $(u,v)\in\mathbb{D}\setminus (0,0)$,  is a non-trivial {\it bound state} of \eqref{system:alphabeta} if $(u,v)$ is a non-trivial  critical point
of $\mathcal{J}_\nu$.
A bound state $(\tilde{u},\tilde{v})$ is called ground state if its energy is minimal among all the non-trivial and non-negative bound states, namely
\begin{equation}\label{ctilde}
\tilde{c}_\nu=\mathcal{J}_\nu(\tilde{u},\tilde{v})=\min\left\{\mathcal{J}_\nu(u,v): (u,v)\in \mathbb{D}\setminus (0,0),\; u,v\ge 0, \mbox{ and } \mathcal{J}_\nu'(u,v)=0\right\}.
\end{equation}
\end{definition}
Let us stress that the functional $\mathcal{J}_\nu \in C^1(\mathbb{D},\mathbb{R})$. In addition, the energy functional is not bounded from below. Indeed, given $(\tilde u,\tilde v)\in \mathbb{D}$, we have
\begin{equation*}
\mathcal{J}_\nu(t \tilde u,t \tilde v) \to -\infty \qquad \mbox{ as } t\to\infty,
\end{equation*}

In order to minimize the energy functional, it shall be convenient to introduce a proper constraint. Let us introduce the Nehari manifold associated to the functional $\mathcal{J}_\nu$, denoted by $\mathcal{N}_\nu$, and defined as
\begin{equation*}
\mathcal{N}_\nu=\left\{ (u,v) \in \mathbb{D} \setminus (0,0) \, : \,  \Phi_\nu(u,v)=0 \right\},
\end{equation*}
where
\beq\label{Psi}
\displaystyle \Phi_\nu(u,v)=\left\langle \mathcal{J}'_\nu(u,v){\big|}(u,v)\right\rangle.
\eeq
Plainly, the set $\mathcal{N}_\nu$ contains all the non-trivial critical points of $\mathcal{J}_\nu$ in $\mathbb{D}$. For the reader's convenience, we will recall some well-known facts about Nehari manifolds.

Given $(u,v) \in \mathcal{N}_\nu$, then it holds that
\begin{equation} \label{Nnueq1}
 \|(u,v)\|_\mathbb{D}^2=\|u_n\|_{L^{2^*}}^{2^*}+\|v_n\|_{L^{2^*}}^{2^*} +\nu (\alpha+\beta) \int_{\mathbb{R}^N} h(x) |u|^{\alpha} |v|^{\beta} \, dx,
\end{equation}
so the restricted energy functional can be rewritten as
\beq\label{Nnueq2}
\mathcal{J}_{\nu}{\big|}_{\mathcal{N}_\nu} (u,v) = \frac{1}{N} \int_{\mathbb{R}^N} \left(  |u|^{2^*} +  |v|^{2^*}  \right)  +\nu \left( \frac{\alpha+\beta-2}{2} \right)  \int_{\mathbb{R}^N} h(x) |u|^{\alpha} |v|^{\beta}   \, dx.
\eeq

For every $(u,v)\in\mathbb{D}\setminus \{(0,0)\}$, there exists a unique $t=t_{(u,v)}$ such that $(tu,tv) \in \mathcal{N}_\nu$. Indeed, $t_{(u,v)}$ can be defined as the unique solution to the algebraic equation
\beq\label{normH}
\|(u,v)\|_\mathbb{D}^2=t^{2^*-2}  \left( \|u_n\|_{L^{2^*}}^{2^*}+\|v_n\|_{L^{2^*}}^{2^*} \right) + \nu (\alpha+\beta) \, t^{\alpha+\beta-2} \int_{\mathbb{R}^N} h(x) |u|^{\alpha} |v|^{\beta}   \, dx.
\eeq
By using \eqref{Nnueq1} and hypotheses \eqref{alphabeta}, one gets that, for any $(u,v) \in \mathcal{N}_\nu$,
\beq\label{criticalpoint1}
\begin{split}
\mathcal{J}_\nu''(u,v)[u,v]^2&=\left\langle \Phi_\nu'(u,v){\big|}(u,v)\right\rangle\\
&=(2-\alpha-\beta)\|(u,v)\|_{\mathbb{D}}^2+(\alpha+\beta-2^*)\left( \|u\|_{L^{2^*}}^{2^*}+\|v\|_{L^{2^*}}^{2^*} \right) <0.
\end{split}
\eeq
Thus, $\mathcal{N}_\nu$ is a locally smooth manifold near any $(u,v)\in \mathbb{D}\setminus \{(0,0)\}$ with $\Phi_\nu(u,v)=0$. Moreover,  the second variation of the functional $\mathcal{J}_{\nu}$ at $(0,0)$ along the direction $(\varphi_1,\varphi_2)$ satisfies
\begin{equation*}
 \mathcal{J}_\nu''(0,0)[\varphi_1,\varphi_2]^2=\|(\varphi_1,\varphi_2)\|^2_{\mathbb{D}}>0 \quad \text{ for any } (\varphi_1,\varphi_2)\in \mathcal{N}_\nu,
\end{equation*}
thus, we can infer that $(0,0)$ is a strict minimum for $\mathcal{J}_\nu$. As a consequence, $(0,0)$ is an isolated point of the set $\displaystyle \mathcal{N}_\nu  \, \cup \, (0,0)$ and, therefore, the Nehari manifold $\mathcal{N}_\nu$ is a smooth complete manifold of codimension $1$. Moreover, there exists a constant $r_\nu>0$ such that
\beq\label{criticalpoint2}
\|(u,v)\|_{\mathbb{D}} > r_\nu\quad\text{for all } (u,v)\in \mathcal{N}_\nu.
\eeq

On the other hand, let $(u,v) \in \mathbb{D}$ be a critical point of $\mathcal{J}_\nu$ constrained on $\mathcal{N}_\nu$, then there exists a Lagrange multiplier $\omega$ such that
\begin{equation*}
(\mathcal{J}_{\nu}{\big|}_{\mathcal{N}_\nu})'(u,v)=\mathcal{J}'_\nu(u,v)-\omega \Phi'_\nu(u,v)=0.
\end{equation*}
Consequently, we obtain that $\displaystyle \Phi_\nu(u,v)=\left\langle \mathcal{J}'_\nu(u,v){\big|}(u,v)\right\rangle = \omega \left\langle \Phi'_\nu(u,v){\big|}(u,v)\right\rangle $. By \eqref{criticalpoint1}, one finds that $\omega=0$ and $\mathcal{J}'_\nu(u,v)=0$. To summarize,
\vspace{0.15cm}
\begin{center}
$(u,v) \in \mathbb{D}$ is a critical point of $\mathcal{J}_\nu$ $\quad\Leftrightarrow\quad$ $(u,v) \in \mathbb{D}$ is a critical point of $\mathcal{J}_{\nu}{\big|}_{\mathcal{N}_\nu}$.
\end{center}
\vspace{0.15cm}

Let us also note that, on the Nehari manifold $\mathcal{N}_\nu$,
\beq\label{Nnueq}
(\mathcal{J}_{\nu}{\big|}_{\mathcal{N}_\nu})' (u,v) = \left( \frac{1}{2}-\frac{1}{\alpha+\beta} \right) \|(u,v)\|^2_{\mathbb{D}} +  \left( \frac{1}{\alpha+\beta}-\frac{1}{2^*} \right) \left( \|u\|_{L^{2^*}}^{2^*}+\|v\|_{L^{2^*}}^{2^*} \right).
\eeq
As a consequence of hypotheses \eqref{alphabeta} and \eqref{criticalpoint2}, we have
\begin{equation*}
\mathcal{J}_{\nu} (u,v) > \left( \frac{1}{2}-\frac{1}{\alpha+\beta} \right) r^2_\nu\quad  \quad\text{for all } (u,v)\in \mathcal{N}_\nu.
\end{equation*}
Therefore, the energy functional $\mathcal{J}_{\nu} (u,v)$ is bounded from below on $\mathcal{N}_\nu$, so we can try to obtain solutions of \eqref{system:alphabeta} by minimizing $\mathcal{J}_\nu$  on the Nehari manifold.

\subsection{Semi-trivial solutions}

\

As commented in the in the introduction, a particular type of solutions are the \textit{semi-trivial} ones which will help us to obtain certain energy estimates useful along our study. To do so, first we determine their variational nature.

Let us consider the decoupled energy functionals $\mathcal{J}_j:\mathcal{D}^{1,2} (\mathbb{R}^N)\mapsto\mathbb{R}$,
\begin{equation}\label{funct:Ji}
\mathcal{J}_j(u) =\frac{1}{2} \int_{\mathbb{R}^N}  |\nabla u|^2 \, dx -\frac{\lambda_j}{2} \int_{\mathbb{R}^N} \dfrac{u^2}{|x|^2}  \, dx - \frac{1}{2^*} \int_{\mathbb{R}^N} |u|^{2^*} \, dx,
\end{equation}
for $j=1,2$. Note that
\begin{equation*}
\mathcal{J}_\nu(u,v)=\mathcal{J}_1(u)+\mathcal{J}_2(v)-\nu \int_{\mathbb{R}^N} h(x) |u|^{\alpha} |v|^{\beta} \, dx.
\end{equation*}
The function $z_\mu^{\lambda_j}$, defined in \eqref{zeta}, is a global minimum of $\mathcal{J}_j$ on the underlying Nehari manifold
\begin{equation*}
\begin{split}
\mathcal{N}_j&= \left\{ u \in \mathcal{D}^{1,2} (\mathbb{R}^N) \setminus \{0\} \, : \,  \left\langle \mathcal{J}'_j(u){\big|} u\right\rangle=0 \right\}\\
&= \left\{ u \in\mathcal{D}^{1,2} (\mathbb{R}^N) \setminus \{0\} \, : \,  \|u\|_{\lambda_j}=\|u\|_{L^{2^*}}^{2^*} \, dx \right\}.
\end{split}
\end{equation*}

By a direct computation, it can be checked that the energy levels of $z_\mu^{\lambda_j}$, and therefore the \textit{semi-trivial solutions} energy, are given by
\beq\label{Jzeta}
\mathcal{J}_1(z_\mu^{\lambda_1})=\dfrac{1}{N}\mathcal{S}^{\frac{N}{2}}(\lambda_1)=\mathcal{J}_\nu(z_\mu^{\lambda_1},0), \qquad \mathcal{J}_2(z_\mu^{\lambda_2})=\dfrac{1}{N}\mathcal{S}^{\frac{N}{2}}(\lambda_2)=\mathcal{J}_\nu(0,z_\mu^{\lambda_2}),
\eeq
for any $\mu>0$ where $\mathcal{S}(\lambda)$ is defined in \eqref{Slambda}.

\

The following result collects the behavior of the \textit{semi-trivial} solutions $(z_\mu^{\lambda_1},0)$ and $(0,z_\mu^{\lambda_2})$ concerning the energy functional $\mathcal{J}_\nu$ on the Nehari manifold $\mathcal{N}_\nu$.
\begin{proposition}\label{thmsemitrivialalphabeta}{\cite[Theorem~2.2]{AbFePe}}  Under hypotheses \eqref{alphabeta} and \eqref{H}, the following holds:
\begin{enumerate}
\item[i)] If $\alpha>2$ or $\alpha=2$ and $\nu$ small enough, then $(0,z_\mu^{\lambda_2})$ is a local minimum of $\mathcal{J}_\nu$ on $\mathcal{N}_\nu$.
\item[ii)] If $\beta>2$ or $\beta=2$ and $\nu$ small enough, then $(z_\mu^{\lambda_1},0)$ is a local minimum of $\mathcal{J}_\nu$ on $\mathcal{N}_\nu$.
\item[iii)] If $\alpha<2$ or $\alpha=2$ and $\nu$ large enough, then $(0,z_\mu^{\lambda_2})$ is a saddle point for $\mathcal{J}_\nu$ on $\mathcal{N}_\nu$.
\item[iv)] If $\beta<2$ or $\beta=2$ and $\nu$ large enough, then $(z_\mu^{\lambda_1},0)$ is a saddle point for $\mathcal{J}_\nu$ on $\mathcal{N}_\nu$.
\end{enumerate}
\end{proposition}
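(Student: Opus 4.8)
The plan is to read off the local nature of each semi-trivial point on $\mathcal N_\nu$ from the behaviour of $\mathcal J_\nu$ along rays, and I will carry it out for $(0,z_\mu^{\lambda_2})$; this yields items i) and iii), while ii) and iv) follow verbatim after interchanging the two components together with the pairs $(\alpha,\lambda_1)$ and $(\beta,\lambda_2)$. First I would note that, because $\alpha,\beta>1$, the coupling term and its first differential vanish when the first component is zero, so $(0,z_\mu^{\lambda_2})$ is a critical point of $\mathcal J_\nu$ (the second component solving \eqref{entire}) lying on $\mathcal N_\nu$; hence its character on $\mathcal N_\nu$ is dictated by the second-order expansion of $\mathcal J_\nu$ at that point.

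Next I would fix a direction $\varphi\in\mathcal D^{1,2}(\mathbb R^N)$, $\varphi\ge0$, $\varphi\not\equiv 0$, and study the reduced energy
\[
m(s):=\max_{t>0}\mathcal J_\nu\big(ts\varphi,\,t\,z_\mu^{\lambda_2}\big),
\]
which by the unique-projection property \eqref{normH} is precisely the value of $\mathcal J_\nu$ at the intersection of $\mathcal N_\nu$ with the ray through $(s\varphi,z_\mu^{\lambda_2})$; in particular $m(0)=\mathcal J_\nu(0,z_\mu^{\lambda_2})=\tfrac1N\mathcal S^{N/2}(\lambda_2)$ by \eqref{Jzeta}. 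Setting $A=\|\varphi\|_{\lambda_1}^2$, $P=\|\varphi\|_{2^*}^{2^*}$ and $R=\int_{\mathbb R^N}h\,\varphi^{\alpha}(z_\mu^{\lambda_2})^{\beta}\,dx$, and using \eqref{normcrit} with $B=\mathcal S^{N/2}(\lambda_2)$, the maximand becomes $\tfrac{t^2}{2}(s^2A+B)-\tfrac{1}{2^*}t^{2^*}(s^{2^*}P+B)-\nu\,t^{\alpha+\beta}s^{\alpha}R$, whose maximiser $\tau(s)$ obeys $\tau(0)=1$. By the envelope identity, $m'(s)$ equals the partial $s$-derivative of the maximand at $t=\tau(s)$, namely $t^2sA-t^{2^*}s^{2^*-1}P-\nu\alpha\,t^{\alpha+\beta}s^{\alpha-1}R$, so as $s\to0^+$ its leading behaviour is governed by the three powers $s^{1}$, $s^{2^*-1}$ and $s^{\alpha-1}$, the last carrying the negative coefficient $-\nu\alpha R$.

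From here the four statements drop out. If $\alpha>2$, the leading correction is $\tfrac12 As^2>0$ and the coupling only enters at the higher order $s^{\alpha}$, so $m(s)>m(0)$ near $s=0$ and $(0,z_\mu^{\lambda_2})$ is a local minimum (item i), case $\alpha>2$). If $\alpha=2$, the $s^2$-coefficient reduces to $\tfrac12 A-\nu R$: it is positive for every $\varphi$ once $\nu$ is small (by the uniform bound on $R$ recorded below), giving again a local minimum (item i)), whereas selecting one $\varphi$ with $R>0$ it turns negative for $\nu$ large, producing a strict descent direction (item iii), case $\alpha=2$). If $\alpha<2$, since $0<\alpha-1<1<2^*-1$ the term $-\nu\alpha R\,s^{\alpha-1}$ dominates, whence $m'(s)<0$ and $m(s)<m(0)$ for small $s>0$: switching on the first component strictly lowers the energy. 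In both saddle cases I would complement this descent with the opposite behaviour in the second component: restricting to $\{0\}\times\mathcal N_2$, the point $z_\mu^{\lambda_2}$ is a global minimum of $\mathcal J_2$, so perturbations there cannot decrease the energy; the coexistence of an ascent and a descent direction yields the saddle structure (item iii)).

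The step I expect to be the main obstacle is the borderline case $\alpha=2$, where the weight enters at the same homogeneity as the good quadratic term: to turn the directional estimate into a genuine (uniform) local minimum one must establish $R=\int_{\mathbb R^N}h\,\varphi^2(z_\mu^{\lambda_2})^{\beta}\,dx\le C\|\varphi\|_{\lambda_1}^2$ with $C$ independent of $\varphi$. As $z_\mu^{\lambda_2}$ is singular at the origin and only mildly decaying at infinity this is not a plain Sobolev bound; it has to be drawn from \eqref{H}, the explicit profile \eqref{zeta} and a Hardy--Sobolev argument in which the constraint \eqref{alphabeta}, forcing $\beta\le 2^*-2$, is exactly what renders the singular weight $(z_\mu^{\lambda_2})^{\beta}$ admissible. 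By contrast, the ingredients needed for $\alpha\neq2$ are soft: the higher-order vanishing of the coupling and, for the saddle, the minimality of $z_\mu^{\lambda_2}$ on $\mathcal N_2$.
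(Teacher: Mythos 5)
You should first note that the paper never proves this proposition: it is quoted verbatim from \cite[Theorem~2.2]{AbFePe}, so your attempt has to be measured against the proof given there. The decisive difference is this: your entire analysis takes place on the curves $s\mapsto \tau(s)\,(s\varphi,\,z_\mu^{\lambda_2})$, i.e.\ on points of $\mathcal{N}_\nu$ whose \emph{second component is a positive multiple of} $z_\mu^{\lambda_2}$. A local minimum of $\mathcal{J}_\nu$ on $\mathcal{N}_\nu$ is a statement about \emph{all} nearby $(u,v)\in\mathcal{N}_\nu$, in particular those with $v$ not proportional to $z_\mu^{\lambda_2}$, and about these your argument says nothing. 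This is not a formality: $z_\mu^{\lambda_2}$ is a minimizer of $\mathcal{J}_2$ only on the constraint $\mathcal{N}_2$; in the free space the second variation $\mathcal{J}_2''(z_\mu^{\lambda_2})$ has a negative direction (along $z_\mu^{\lambda_2}$ itself, since $\mathcal{J}_2(tz_\mu^{\lambda_2})<\mathcal{J}_2(z_\mu^{\lambda_2})$ for $t\neq1$) and a degenerate direction (along $\partial_\mu z_\mu^{\lambda_2}$, because the critical points form a scaling family). On $\mathcal{N}_\nu$ the constraint permits $v$ to drift off $\mathcal{N}_2$ by a defect of order $\|u\|_{\lambda_1}^2$, producing a loss $\mathcal{J}_2(v)-\mathcal{J}_2(z_\mu^{\lambda_2})$ of order $-\|u\|_{\lambda_1}^4$, and one must prove that this loss (and the degenerate direction) is dominated by the quadratic gain in $u$. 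That is exactly what the proof in \cite{AbFePe} does, by projecting the second component onto the scalar Nehari manifold $\mathcal{N}_2$ and invoking the \emph{global} minimality of $z_\mu^{\lambda_2}$ there; your proposal contains no substitute for this step. Directional minimality along curves through a point does not imply local minimality (already false in $\mathbb{R}^2$), and in addition your estimates are pointwise in $\varphi$ with no uniformity over the sphere of directions, which is needed since every neighborhood of $(0,z_\mu^{\lambda_2})$ in $\mathcal{N}_\nu$ contains points $\tau(s)(s\varphi,z_\mu^{\lambda_2})$ for all unit $\varphi$ simultaneously. By contrast, your treatment of iii) and iv) is essentially sound: to disprove local minimality one descent curve suffices, your envelope computation provides it (take $\varphi\in C_c^\infty(\mathbb{R}^N\setminus\{0\})$, $\varphi\geq 0$, so that $R$ is finite and positive), and points of strictly larger energy nearby exist along $\{0\}\times\mathcal{N}_2$ because the minimizers of $\mathcal{J}_2$ on $\mathcal{N}_2$ form only a one-parameter family.

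The second genuine gap is the estimate you yourself single out as the crux of the case $\alpha=2$, namely $\int_{\mathbb{R}^N} h\,\varphi^2 (z_\mu^{\lambda_2})^\beta\,dx\le C\|\varphi\|_{\lambda_1}^2$ with $C$ independent of $\varphi$: this cannot be derived from \eqref{H} and \eqref{alphabeta} alone, and in the subcritical regime $\alpha+\beta<2^*$ it is false. The restriction $\beta\le 2^*-2$ only tames the singularity at the origin (it gives $\beta a_{\lambda_2}<2$); the obstruction is at infinity, where by \eqref{zeta} one has $z_\mu^{\lambda_2}(x)\sim |x|^{-(N-2-a_{\lambda_2})}$. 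Whenever $\beta(N-2-a_{\lambda_2})<2$ (which occurs for $N\le 5$ with $\beta$ close to $1$ and $\lambda_2$ close to $\Lambda_N$) and $h\equiv 1$ (allowed by \eqref{H}), the rescalings $\varphi_R=R^{-(N-2)/2}\varphi(\cdot/R)$ have constant $\mathcal{D}^{1,2}$-norm while $\int h\,\varphi_R^2(z_\mu^{\lambda_2})^\beta\,dx\sim R^{\,2-\beta(N-2-a_{\lambda_2})}\to\infty$; the integral can even be infinite for suitable $\varphi\in\mathcal{D}^{1,2}(\mathbb{R}^N)$. (When $\alpha+\beta=2^*$ the estimate does hold, since then $\beta=2^*-2$ forces $\beta(N-2-a_{\lambda_2})>2$; in the subcritical case one needs integrability or decay of $h$ beyond \eqref{H} --- indeed without it even the coupling term of $\mathcal{J}_\nu$ fails to be finite on all of $\mathbb{D}$, an issue inherited from quoting \cite{AbFePe} with weakened hypotheses.) So the borderline case $\alpha=2$, $\nu$ small, cannot be closed by the Hardy--Sobolev argument you sketch.
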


\

Before ending this preliminary section, let us recall an useful algebraic result, which will be used in several proofs along this paper.
\begin{lemma}\label{algelemma}{\cite[Lemma 3.3]{AbFePe}}
Let $A, B>0$ and $\gamma \ge 2$, consider the set
\begin{equation*}
\Sigma_\nu=\{\sigma \in (0,+\infty)  \, : \, A \sigma^{\frac{N-2}{N}} < \sigma + B \nu \sigma^{\frac{\gamma}{2} \frac{N-2}{N}} \}.
\end{equation*}
Then, for every $\varepsilon>0$ there exists $\tilde{\nu}>0$ such that
\begin{equation*}
\inf_{\Sigma_\nu} \sigma > (1-\varepsilon) A^{\frac{N}{2}} \qquad \mbox{ for any } 0<\nu<\tilde{\nu}.
\end{equation*}
\end{lemma}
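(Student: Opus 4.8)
The plan is to treat this as a purely elementary one-variable estimate, isolating the exponents so that the role of the hypothesis $\gamma\ge 2$ becomes transparent. Writing $p=\frac{N-2}{N}\in(0,1)$, so that $1-p=\frac{2}{N}$ and $\frac{\gamma}{2}\frac{N-2}{N}=\frac{\gamma p}{2}$, the defining inequality of $\Sigma_\nu$ reads $A\sigma^{p}<\sigma+B\nu\sigma^{\gamma p/2}$. Note first that at the formal value $\nu=0$ the condition becomes $A<\sigma^{1-p}=\sigma^{2/N}$, i.e. $\sigma>A^{N/2}$, so $\inf\Sigma_0=A^{N/2}$; since the perturbation $B\nu\sigma^{\gamma p/2}$ is positive, increasing $\nu$ only enlarges $\Sigma_\nu$ and can only lower its infimum. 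Hence the assertion is a quantitative stability statement, and it suffices to show that for small $\nu$ no element of $\Sigma_\nu$ lies much below $A^{N/2}$.

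First I would divide the defining inequality by $\sigma^{p}>0$, which turns it into
\[
A-\sigma^{2/N}<B\nu\,\sigma^{p(\gamma-2)/2}.
\]
Elements $\sigma\in\Sigma_\nu$ with $\sigma>A^{N/2}$ already satisfy $\sigma>(1-\varepsilon)A^{N/2}$, so I may restrict attention to $\sigma\in\Sigma_\nu\cap(0,A^{N/2}]$, on which the left-hand side is nonnegative. Here the hypothesis $\gamma\ge2$ is decisive: it forces the exponent $p(\gamma-2)/2$ to be nonnegative, so that $\sigma\mapsto\sigma^{p(\gamma-2)/2}$ is bounded on $(0,A^{N/2}]$ by the constant $M:=\max\{1,(A^{N/2})^{p(\gamma-2)/2}\}$, attained at the right endpoint. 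Consequently every such $\sigma$ obeys $A-\sigma^{2/N}<B\nu M$, whence $\sigma^{2/N}>A-B\nu M$.

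It then remains to choose $\tilde\nu$. Set $g(\nu):=(A-B\nu M)^{N/2}$, which is continuous and strictly decreasing in $\nu$ on the range where $A-B\nu M>0$, with $g(\nu)\to A^{N/2}$ as $\nu\to0^{+}$. By continuity there is $\tilde\nu>0$ such that $g(\nu)>(1-\varepsilon)A^{N/2}$ for all $0<\nu<\tilde\nu$. For these $\nu$ the estimate above gives $\sigma>g(\nu)$ for every $\sigma\in\Sigma_\nu\cap(0,A^{N/2}]$, while the remaining elements exceed $A^{N/2}>g(\nu)$; in both cases $\sigma>g(\nu)$, so $\inf_{\Sigma_\nu}\sigma\ge g(\nu)>(1-\varepsilon)A^{N/2}$, as required. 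The only genuinely delicate point is the uniform bound on the coupling term near $\sigma=0$: without $\gamma\ge2$ the exponent would be negative and $\sigma^{p(\gamma-2)/2}$ would blow up as $\sigma\to0^{+}$, so no constant $M$ would exist and the conclusion could fail; everything else is routine bookkeeping of the exponents together with the continuity argument.
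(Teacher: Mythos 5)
Your proof is correct. Note that the paper itself gives no proof of this lemma: it is quoted verbatim from \cite[Lemma 3.3]{AbFePe}, so there is no internal argument to compare against; your write-up supplies the missing elementary proof. The key steps all check out: dividing the defining inequality by $\sigma^{\frac{N-2}{N}}$ to get $A-\sigma^{2/N}<B\nu\,\sigma^{\frac{N-2}{2N}(\gamma-2)}$, using $\gamma\ge 2$ to bound the right-hand factor $\sigma^{\frac{N-2}{2N}(\gamma-2)}$ uniformly by a constant $M$ on $(0,A^{N/2}]$ (this is exactly where the hypothesis enters, and your observation that for $\gamma<2$ the term blows up as $\sigma\to 0^+$, placing a whole interval $(0,\delta)$ inside $\Sigma_\nu$ and forcing $\inf_{\Sigma_\nu}\sigma=0$, correctly identifies why the lemma fails without it), and then the continuity of $\nu\mapsto(A-B\nu M)^{N/2}$ at $\nu=0$ to choose $\tilde\nu$. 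The case split between $\sigma>A^{N/2}$ and $\sigma\le A^{N/2}$ is handled cleanly, and the conclusion $\inf_{\Sigma_\nu}\sigma\ge(A-B\nu M)^{N/2}>(1-\varepsilon)A^{N/2}$ follows; this is essentially the same calculus-level argument as in the cited source, so nothing further is needed.
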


\

\section{The Palais-Smale condition}\label{section:PS}

A crucial step to obtain the existence of solutions relies on the compactness of the energy functional $\mathcal{J}_\nu$ provided by a PS condition. Before stating several results, let us recall some general definitions concerning PS sequences and the PS condition.

\begin{definition}
Let $V$ be a Banach space. We say that $\{u_n\} \subset V$ is a PS sequence for an energy functional $\mathfrak{F}:V\mapsto\mathbb{R}$ if
\begin{equation*}
\mathfrak{F}(u_n) \to c \quad \hbox{ and }\quad  \mathfrak{F}'(u_n) \to 0\quad\mbox{in}\ V'\quad \hbox{as}\quad n\to + \infty,
\end{equation*}
where $V'$ is the dual space of $V$. Moreover, we say that $\{u_n\}$ satisfies a PS condition if
\begin{equation*}
\{u_n\}\quad \mbox{has a strongly convergent subsequence.}
\end{equation*}

In particular, we say that the functional $\mathfrak{F}$ satisfies the PS condition at level $c$ if every PS sequence at level $c$ for $\mathfrak{F}$ satisfies the PS condition.
\end{definition}

As a first step, we shall prove that a PS sequence for the energy functional $\mathcal{J}_\nu$ restricted to the Nehari manifold $\mathcal{N}_\nu$ is also a PS sequence for  $\mathcal{J}_\nu$ defined in the whole space $\mathbb{D}$.

\begin{lemma}\label{lemma:PSNehari}
Assume hypotheses \eqref{alphabeta} and \eqref{H}.  Let $\{(u_n,v_n)\} \subset \mathcal{N}_\nu$ be a PS sequence for $\mathcal{J}_\nu {\big|}_{\mathcal{N}_\nu}$ at level $c\in\mathbb{R}$. Then, $\{(u_n,v_n)\}$ is a PS sequence for $\mathcal{J}_\nu$ in $\mathbb{D}$, namely
\beq\label{PSNehari}
\mathcal{J}_{\nu}'(u_n,v_n)\to0 \quad \mbox{ in } \mathbb{D}' \quad\text{as }n\to+\infty.
\eeq
\end{lemma}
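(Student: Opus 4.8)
The plan is to exploit the Lagrange-multiplier description of constrained critical points, together with the nondegeneracy of $\mathcal{N}_\nu$ recorded in \eqref{criticalpoint1}--\eqref{criticalpoint2}.

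\textbf{Boundedness.} First I would check that $\{(u_n,v_n)\}$ is bounded in $\mathbb{D}$. On $\mathcal{N}_\nu$ the energy admits the expression \eqref{Nnueq},
$$\mathcal{J}_\nu(u_n,v_n)=\left(\frac12-\frac1{\alpha+\beta}\right)\|(u_n,v_n)\|_{\mathbb{D}}^2+\left(\frac1{\alpha+\beta}-\frac1{2^*}\right)\left(\|u_n\|_{L^{2^*}}^{2^*}+\|v_n\|_{L^{2^*}}^{2^*}\right),$$
and since $2<\alpha+\beta\le2^*$ by \eqref{alphabeta}, both coefficients are nonnegative while the first is strictly positive. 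Hence $\mathcal{J}_\nu(u_n,v_n)\ge(\frac12-\frac1{\alpha+\beta})\|(u_n,v_n)\|_{\mathbb{D}}^2$, and as $\mathcal{J}_\nu(u_n,v_n)\to c$ the sequence is bounded in $\mathbb{D}$.

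\textbf{Lagrange multipliers.} Being a PS sequence for the constrained functional, for each $n$ there is $\omega_n\in\mathbb{R}$ with
$$\mathcal{J}_\nu'(u_n,v_n)=(\mathcal{J}_\nu|_{\mathcal{N}_\nu})'(u_n,v_n)+\omega_n\,\Phi_\nu'(u_n,v_n),$$
as in the computation preceding \eqref{Nnueq}. Since $(\mathcal{J}_\nu|_{\mathcal{N}_\nu})'(u_n,v_n)\to0$ in $\mathbb{D}'$ by hypothesis, proving \eqref{PSNehari} amounts to showing $\omega_n\,\Phi_\nu'(u_n,v_n)\to0$ in $\mathbb{D}'$. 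Because $\Phi_\nu'$ carries bounded subsets of $\mathbb{D}$ into bounded subsets of $\mathbb{D}'$, the boundedness from the previous step reduces everything to proving $\omega_n\to0$.

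\textbf{Vanishing of $\omega_n$.} I would evaluate the identity above at $(u_n,v_n)$. Its left-hand side equals $\Phi_\nu(u_n,v_n)=0$, because $(u_n,v_n)\in\mathcal{N}_\nu$. On the right-hand side, boundedness of $\{(u_n,v_n)\}$ together with $(\mathcal{J}_\nu|_{\mathcal{N}_\nu})'(u_n,v_n)\to0$ yields $\langle(\mathcal{J}_\nu|_{\mathcal{N}_\nu})'(u_n,v_n)\big|(u_n,v_n)\rangle\to0$, so that
$$\omega_n\,\big\langle\Phi_\nu'(u_n,v_n)\big|(u_n,v_n)\big\rangle=o(1).$$
By \eqref{criticalpoint1} the bracket equals $(2-\alpha-\beta)\|(u_n,v_n)\|_{\mathbb{D}}^2+(\alpha+\beta-2^*)(\|u_n\|_{L^{2^*}}^{2^*}+\|v_n\|_{L^{2^*}}^{2^*})$; since $\alpha+\beta>2$ and $\alpha+\beta\le2^*$ both summands are nonpositive, and the uniform bound \eqref{criticalpoint2} gives $\langle\Phi_\nu'(u_n,v_n)\big|(u_n,v_n)\rangle\le(2-\alpha-\beta)r_\nu^2<0$. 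Thus the bracket is bounded away from $0$, forcing $\omega_n\to0$.

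The heart of the argument --- and the only genuine obstacle --- is this last nondegeneracy step: one must guarantee that $\langle\Phi_\nu'(u_n,v_n)\big|(u_n,v_n)\rangle$ does not degenerate to $0$ along the sequence, which is exactly what the sign information in \eqref{criticalpoint1} and the uniform radius bound \eqref{criticalpoint2} supply. Putting the three steps together, $\omega_n\,\Phi_\nu'(u_n,v_n)\to0$ in $\mathbb{D}'$, whence $\mathcal{J}_\nu'(u_n,v_n)=(\mathcal{J}_\nu|_{\mathcal{N}_\nu})'(u_n,v_n)+o(1)\to0$ in $\mathbb{D}'$, which is precisely \eqref{PSNehari}.
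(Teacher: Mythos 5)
Your proposal is correct and follows essentially the same route as the paper's proof: boundedness via \eqref{Nnueq}, the Lagrange-multiplier identity, and the nondegeneracy bound from \eqref{criticalpoint1}--\eqref{criticalpoint2} forcing $\omega_n\to0$. If anything, your writeup is slightly more careful than the paper's, which compresses the multiplier step into a chain of equalities that silently absorbs the $o(1)$ term coming from $(\mathcal{J}_\nu|_{\mathcal{N}_\nu})'(u_n,v_n)\to0$, and which leaves implicit your observation that $\Phi_\nu'$ maps bounded sets of $\mathbb{D}$ into bounded sets of $\mathbb{D}'$.
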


\begin{proof}
Let $\{(u_n,v_n)\}\subset \mathcal{N}_\nu$ be a PS sequence for $\mathcal{J}_{\nu}$ at level $c$, by \eqref{Nnueq}, one has
\begin{equation*}
c+o(1)=\mathcal{J}_{\nu}(u_n,v_n)\ge \left(\frac 12 - \frac{1}{\alpha+\beta} \right) \|(u_n,v_n)\|^2_{\mathbb{D}},
\end{equation*}
which implies that  $\{(u_n,v_n)\}$ is a bounded sequence in $\mathbb{D}$. Moreover, consider the functional $\Phi_\nu$ introduced in \eqref{Psi}. From inequalities \eqref{criticalpoint1} and \eqref{criticalpoint2}, we get
\begin{equation}\label{PSNehari1}
\left\langle \Phi_\nu'(u_n,v_n){\big|}(u_n,v_n)\right\rangle\\ \leq (2-\alpha-\beta)r^2_\nu.
\end{equation}
Let $\{\omega_n\}\subset \R$ be the sequence of multiplers provided by the method of Lagrange, then
\begin{equation}\label{PSNehari2}
(\mathcal{J}_\nu {\big|}_{\mathcal{N}_\nu})'(u_n,v_n) = \mathcal{J}'_\nu (u_n,v_n)-\omega_n \Phi_\nu'(u_n,v_n) \quad \mbox{ in } \mathbb{D}'.
 \end{equation}
Taking the scalar product with $(u_n,v_n)$ in the previous expression, we have that $\displaystyle \Phi_\nu(u_n,v_n)=\left\langle \mathcal{J}'_\nu(u_n,v_n){\big|}(u_n,v_n)\right\rangle = \omega_n \left\langle \Phi'_\nu(u_n,v_n){\big|}(u_n,v_n)\right\rangle =0$. By \eqref{PSNehari1}, one derives that $\omega_n\to0$. Thus, \eqref{PSNehari2} implies directly \eqref{PSNehari}.
\end{proof}

Next, we focus on boundedness of PS sequences that, together with the natural embedding of the space $\mathcal{D}^{1,2}$, will provide compactness of PS sequences.

\begin{lemma}\label{lemmaPS0}
Assume  hypotheses \eqref{alphabeta} and \eqref{H}. Let $\{(u_n,v_n)\} \subset \mathbb{D}$ be a PS sequence for $\mathcal{J}_\nu$ at level $c\in\mathbb{R}$. Then,  $\|(u_n,v_n)\|_{\mathbb{D}}<C$.
\end{lemma}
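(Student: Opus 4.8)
The plan is to run the standard boundedness argument for mountain-pass-type functionals, choosing the testing multiplier so that the (possibly critical) coupling term is annihilated before it can cause any trouble. Writing $\Phi_\nu(u,v)=\langle \mathcal{J}'_\nu(u,v){\big|}(u,v)\rangle$ as in \eqref{Psi}, a direct computation from \eqref{functalphabeta} gives the off-manifold version of \eqref{Nnueq},
\[
\mathcal{J}_\nu(u,v) - \frac{1}{\alpha+\beta}\Phi_\nu(u,v) = \left(\frac12 - \frac{1}{\alpha+\beta}\right)\|(u,v)\|^2_{\mathbb{D}} + \left(\frac{1}{\alpha+\beta} - \frac{1}{2^*}\right)\left(\|u\|^{2^*}_{L^{2^*}} + \|v\|^{2^*}_{L^{2^*}}\right).
\]
The key point is that the factor $\frac{1}{\alpha+\beta}$ is exactly the reciprocal of the homogeneity degree of the coupling term $\int h(x)|u|^\alpha|v|^\beta$, so that term cancels identically, independently of the sign of $\nu$ or the size of the coupling integral. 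This is precisely the step that neutralizes the difficulty coming from the case $\alpha+\beta=2^*$.

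Next I would invoke hypotheses \eqref{alphabeta}. Since $2<\alpha+\beta\le 2^*$, the first coefficient $\frac12-\frac{1}{\alpha+\beta}$ is strictly positive and the second, $\frac{1}{\alpha+\beta}-\frac{1}{2^*}$, is nonnegative. Discarding the nonnegative critical term yields the clean lower bound
\[
\mathcal{J}_\nu(u_n,v_n) - \frac{1}{\alpha+\beta}\Phi_\nu(u_n,v_n) \ge \left(\frac12 - \frac{1}{\alpha+\beta}\right)\|(u_n,v_n)\|^2_{\mathbb{D}}.
\]

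I would then feed in the PS hypotheses. On the left, $\mathcal{J}_\nu(u_n,v_n)=c+o(1)$, while $|\Phi_\nu(u_n,v_n)|=|\langle \mathcal{J}'_\nu(u_n,v_n){\big|}(u_n,v_n)\rangle|\le \|\mathcal{J}'_\nu(u_n,v_n)\|_{\mathbb{D}'}\,\|(u_n,v_n)\|_{\mathbb{D}}=o(1)\|(u_n,v_n)\|_{\mathbb{D}}$, because $\mathcal{J}'_\nu(u_n,v_n)\to0$ in $\mathbb{D}'$. Setting $t_n=\|(u_n,v_n)\|_{\mathbb{D}}$ and $C_0=\frac12-\frac{1}{\alpha+\beta}>0$, the previous inequality becomes the quadratic estimate
\[
C_0\, t_n^2 \le c + o(1) + o(1)\, t_n .
\]
A quadratic with positive leading coefficient cannot be dominated by a linear function as $t_n\to\infty$; equivalently, dividing by $t_n^2$ along any subsequence with $t_n\to\infty$ would force $C_0\le 0$, a contradiction. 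Hence $\{t_n\}$ is bounded, giving the desired uniform bound $\|(u_n,v_n)\|_{\mathbb{D}}<C$.

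There is no substantive obstacle here: the computation is routine and the only delicate point is the choice of the multiplier $\frac{1}{\alpha+\beta}$, dictated by the homogeneity of the coupling nonlinearity, which removes the critical coupling term from the estimate. As a preliminary remark I would also note that, thanks to Hardy's inequality \eqref{hardy_inequality} and $\lambda_1,\lambda_2\in(0,\Lambda_N)$, the $\mathbb{D}$-norm is positive definite and equivalent to the product $\mathcal{D}^{1,2}$-norm, so controlling $\|(u_n,v_n)\|_{\mathbb{D}}$ genuinely bounds the sequence in $\mathbb{D}$.
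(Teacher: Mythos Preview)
Your proof is correct and follows essentially the same approach as the paper: both combine $\mathcal{J}_\nu(u_n,v_n)$ with $\frac{1}{\alpha+\beta}\langle\mathcal{J}'_\nu(u_n,v_n)\mid(u_n,v_n)\rangle$ to cancel the coupling term and arrive at the identity \eqref{eq:limit2}, then drop the nonnegative $L^{2^*}$ term and conclude boundedness from the resulting quadratic inequality. Your presentation makes the rationale for the multiplier $\frac{1}{\alpha+\beta}$ more explicit, but the argument is the same.
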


\begin{proof}
Let $\{(u_n,v_n)\}$ be a PS sequence for $\mathcal{J}_{\nu}$ at level $c$, i.e.
\begin{equation*}
\mathcal{J}_{\nu}(u_n,v_n)\to c\quad\text{and}\quad \mathcal{J}_{\nu}'(u_n,v_n)\to0\quad\text{as }n\to+\infty.
\end{equation*}
Since $\mathcal{J}_{\nu}'(u_n,v_n)\to0$ in $\mathbb{D}'$, in particular,
\begin{equation*}
\left\langle \mathcal{J}_{\nu}'(u_n,v_n)\left|\frac{(u_n,v_n)}{\|(u_n,v_n)\|_{\mathbb{D}}} \right.\right\rangle\to0.
\end{equation*}
Thus, there exists a subsequence (still denoted by $\{(u_n,v_n)\}$) such that
\begin{equation*}
\|(u_n,v_n)\|_{\mathbb{D}}^2-\left( \|u_n\|_{L^{2^*}}^{2^*}+\|v_n\|_{L^{2^*}}^{2^*} \right)-\nu(\alpha+\beta)\int_{\mathbb{R}^N}h(x)|u_n|^{\alpha}|v_n|^{\beta} \, dx=\|(u_n,v_n)\|_{\mathbb{D}}\cdot o(1).
\end{equation*}
Moreover, since $\mathcal{J}_{\nu}(u_n,v_n)\to c$, we have
\begin{equation*}
\frac12\|(u_n,v_n)\|_{\mathbb{D}}^2-\frac{1}{2^*}\left( \|u_n\|_{L^{2^*}}^{2^*}+\|v_n\|_{L^{2^*}}^{2^*} \right)-\nu\int_{\mathbb{R}^N}h(x)|u_n|^{\alpha}|v_n|^{\beta} \, dx =c+o(1).
\end{equation*}
Therefore
\begin{equation*}
\mathcal{J}_{\nu}(u_n,v_n)-\frac{1}{\alpha+\beta}\left\langle \mathcal{J}_{\nu}'(u_n,v_n)\left|\frac{(u_n,v_n)}{\|(u_n,v_n)\|_{\mathbb{D}}} \right.\right\rangle=c+\|(u_n,v_n)\|_{\mathbb{D}}\cdot o(1),
\end{equation*}
and, hence,
\begin{equation}\label{eq:limit2}
\left( \frac{1}{2}- \frac{1}{\alpha+\beta} \right)\|(u_n,v_n)\|_{\mathbb{D}}^2+\left(\frac{1}{\alpha+\beta}- \frac{1}{2^*} \right)\left( \|u_n\|_{L^{2^*}}^{2^*}+\|v_n\|_{L^{2^*}}^{2^*} \right)=c+\|(u_n,v_n)\|_{\mathbb{D}}\cdot o(1).
\end{equation}
As a consequence
$\displaystyle
\left( \frac{1}{2}- \frac{1}{\alpha+\beta} \right) \|(u_n,v_n)\|_{\mathbb{D}}^2\leq c+\|(u_n,v_n)\|_{\mathbb{D}}\cdot o(1),
$
from where we conclude that the sequence $\{(u_n,v_n)\}$ is bounded in $\mathbb{D}$.
\end{proof}

\

\subsection{Subcritical range $ \alpha+\beta < 2^*$}\hfill\newline

In the subsequent result, we establish the Palais-Smale condition for subcritical energy levels of $\mathcal{J}_\nu$, which will allow to find existence of solutions for \eqref{system:alphabeta} by minimizing the energy functional.

\begin{lemma}\label{lemmaPS2}
 Assume $\alpha+\beta<2^*$ and \eqref{H}.  Then, the functional $\mathcal{J}_\nu$ satisfies the PS condition for any level $c$ such that
\beq\label{hyplemmaPS2}
c<\frac{1}{N} \min\{ \mathcal{S}(\lambda_1),\mathcal{S}(\lambda_2) \}^{\frac{N}{2}}.
\eeq
\end{lemma}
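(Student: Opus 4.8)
The plan is to establish the standard Brezis–Nirenberg-style compactness argument adapted to this system, relying on the concentration-compactness principle to rule out energy concentration below the threshold $\frac{1}{N}\min\{\mathcal S(\lambda_1),\mathcal S(\lambda_2)\}^{N/2}$. The key simplification in the subcritical regime $\alpha+\beta<2^*$ is that the coupling term $\int h\,|u|^\alpha|v|^\beta\,dx$ is \emph{compact}: since $\alpha+\beta<2^*$, the embedding into the relevant product Lebesgue space is compact on bounded sets (after localizing, using $h\in L^\infty$), so this term converges strongly along a weakly convergent subsequence. Thus the only obstruction to strong convergence is the two genuinely critical terms $\|u_n\|_{L^{2^*}}^{2^*}$ and $\|v_n\|_{L^{2^*}}^{2^*}$ together with the Hardy singularities.

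First I would invoke Lemma~\ref{lemmaPS0} to get that $\{(u_n,v_n)\}$ is bounded in $\mathbb D$, hence (up to subsequence) $u_n\weakto u$ and $v_n\weakto v$ weakly in $\mathcal D^{1,2}(\R^N)$, with a.e.\ convergence. Standard elliptic regularity/passing-to-the-limit then shows $(u,v)$ is a critical point of $\mathcal J_\nu$, so $\mathcal J_\nu'(u,v)=0$. Next I would apply the concentration-compactness principle of Lions to the sequences of measures $|\nabla u_n|^2$, $|u_n|^{2^*}$, $u_n^2/|x|^2$ (and likewise for $v_n$), writing
\begin{equation*}
|\nabla u_n|^2 \weakto |\nabla u|^2 + \sum_{k} \mu_k \delta_{x_k} + \mu_0\delta_0,\qquad |u_n|^{2^*}\weakto |u|^{2^*}+\sum_k \nu_k\delta_{x_k}+\nu_0\delta_0,
\end{equation*}
where the Hardy term can only concentrate at the origin. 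The concentration inequalities give $\mathcal S\,\nu_k^{2/2^*}\le \mu_k$ at regular points $x_k\neq 0$, while at the origin the combined Sobolev–Hardy inequality forces $\mathcal S(\lambda_1)\,\nu_0^{2/2^*}\le \mu_0-\lambda_1\gamma_0$ with $\gamma_0$ the mass of $u_n^2/|x|^2$ at $0$; the upshot is that any nontrivial concentration, at the origin or at a regular point, carries at least $\frac{1}{N}\mathcal S(\lambda_j)^{N/2}$ (resp.\ $\frac1N\mathcal S^{N/2}$, which is larger) of energy.

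The decisive step is the energy accounting. Using the Nehari-type identity \eqref{Nnueq2} and the fact that the coupling term passes to the limit, the total energy splits as the limiting energy $\mathcal J_\nu(u,v)$ plus a nonnegative sum of concentration contributions. Since each concentration point contributes at least $\frac1N\min\{\mathcal S(\lambda_1),\mathcal S(\lambda_2)\}^{N/2}$, the assumption \eqref{hyplemmaPS2} that $c$ lies \emph{strictly below} this threshold forces all atoms $\nu_k,\nu_0$ (and hence $\mu_k,\mu_0,\gamma_0$) to vanish. Consequently $\|u_n\|_{L^{2^*}}^{2^*}\to\|u\|_{L^{2^*}}^{2^*}$ and similarly for $v_n$, giving strong $L^{2^*}$ convergence; combined with $\mathcal J_\nu'(u_n,v_n)\to0$ this yields $\|(u_n,v_n)\|_{\mathbb D}\to\|(u,v)\|_{\mathbb D}$, and weak convergence plus norm convergence in the Hilbert space $\mathbb D$ upgrades to strong convergence. \textbf{The main obstacle} I anticipate is the bookkeeping at the origin, where the Sobolev concentration and the Hardy concentration interact: one must use the sharp Sobolev–Hardy constant $\mathcal S(\lambda_j)$ from \eqref{Slambda} rather than $\mathcal S$ alone, and verify that the threshold $\frac1N\mathcal S(\lambda_j)^{N/2}$ emerging from the origin is consistent with the stated minimum. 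Handling the two components simultaneously (so that concentration in $u$ at $0$ costs $\mathcal S(\lambda_1)$ and in $v$ at $0$ costs $\mathcal S(\lambda_2)$) is exactly why the threshold is the \emph{minimum} of the two.
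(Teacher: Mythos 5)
Your overall strategy (boundedness via Lemma~\ref{lemmaPS0}, a Lions decomposition with the Sobolev--Hardy constant $\mathcal{S}(\lambda_j)$ governing concentration at the origin, energy quantization, and strong convergence below the threshold) is exactly the paper's strategy, but your measure decomposition records atoms only at finite points $x_k$ and at $0$, and this omission is a genuine gap: on all of $\R^N$ a PS sequence can also lose mass \emph{at infinity}, and that channel is invisible to the finite-atom decomposition. The decoupled part of $\mathcal{J}_\nu$ is dilation invariant ($\int|\nabla u|^2\,dx$, $\int u^2/|x|^2\,dx$ and $\int|u|^{2^*}dx$ all scale identically), so spreading profiles are admissible: the semi-trivial critical points $(z_{\mu_n}^{\lambda_1},0)$ with $\mu_n\to+\infty$ form a PS sequence at level $\frac1N\mathcal{S}^{\frac N2}(\lambda_1)$ which converges weakly to $(0,0)$, produces \emph{no} atoms at finite points ($|u_n|^{2^*}\weakto 0$ on every compact set), and yet carries $\int_{\R^N}|u_n|^{2^*}dx=\mathcal{S}^{\frac N2}(\lambda_1)$ for every $n$. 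Consequently ``all finite atoms vanish'' does not imply $\|u_n\|_{L^{2^*}}\to\|\tilde u\|_{L^{2^*}}$, and your final step (strong $L^{2^*}$ convergence, hence norm convergence in $\mathbb{D}$) collapses. This is precisely why the paper introduces the quantities $\mu_\infty,\rho_\infty,\gamma_\infty$ of \eqref{con-compinfty}, tests with the cut-off \eqref{cutoffinfi} supported near infinity, and proves the dichotomies \eqref{ineq:coninf_a}--\eqref{ineq:coninf} (either $\rho_\infty=0$ or $\rho_\infty\ge\mathcal{S}^{\frac N2}(\lambda_1)$, and likewise for $\overline{\rho}_\infty$), so that the mass at infinity enters the energy accounting on the same footing as the mass at the origin; only then does hypothesis \eqref{hyplemmaPS2} force $\rho_\infty=\overline{\rho}_\infty=0$ together with all the other atoms.

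A secondary weak point is your justification that the coupling term ``converges strongly along a weakly convergent subsequence'' by compactness of the embedding: this is only a local statement. In fact $\mathcal{D}^{1,2}(\R^N)$ does not embed into $L^q(\R^N)$ for $q<2^*$, and the map $(u,v)\mapsto\int_{\R^N} h|u|^\alpha|v|^\beta dx$ is not weakly continuous on $\mathbb{D}$ when $h$ is merely bounded (the same spreading or translating profiles defeat it). What subcriticality actually buys --- and how the paper uses it --- is localized: in the testing identity with a cut-off $\varphi_{j,\varepsilon}$, H\"older's inequality with exponent $2^*/(\alpha+\beta)>1$, together with $h\in L^\infty(\R^N)$ and $|B_\varepsilon(x_j)|\to0$, shows that the coupling contribution vanishes as $\varepsilon\to0$, so it never enters the quantization at finite concentration points. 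Apart from these two issues (the second is repairable along the lines just indicated, the first requires the missing analysis at infinity), your outline coincides with the paper's proof.
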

\begin{proof}

 From Lemma~\ref{lemmaPS0}, we have that any PS sequence is bounded in $\mathbb{D}$. Therefore, there exists $(\tilde{u},\tilde{v})\in\mathbb{D}$ and a subsequence (denoted also by $\{(u_n,v_n)\}$) such that\begin{align*}\
(u_n,v_n) \rightharpoonup (\tilde{u},\tilde{v})& \quad \hbox{weakly in  } \mathbb{D},\\
(u_n,v_n) \to (\tilde{u},\tilde{v})&\quad \hbox{strongly in  } L^q(\mathbb{R}^N)\times L^q(\mathbb{R}^N)\text{ for } 1\leq q<2^*,\\
(u_n,v_n) \to (\tilde{u},\tilde{v})&\quad \hbox{a.e. in  }\mathbb{R}^N.
\end{align*}
Because of the \textit{concentration-compactness principle} by Lions (cf. \cite{Lions1,Lions2}), there exist a subsequence (still denoted by) $\{(u_n,v_n)\}$, two (at most countable) sets of points $\{x_j\}_{j\in\mathfrak{J}}\subset\mathbb{R}^N$ and $\{y_k\}_{k\in\mathfrak{K}}\subset\mathbb{R}^N$, and positive numbers $\{\mu_j,\rho_j\}_{j\in\mathfrak{J}}$, $\{\overline{\mu}_k,\overline{\rho}_k\}_{k\in\mathfrak{K}}$, $\mu_0$, $\rho_0$, $\gamma_0$, $\overline{\mu}_0$, $\overline{\rho}_0$ and $\overline{\gamma}_0$ such that, in the sense of measures,
\begin{equation}\label{con-comp}
\left\{
\begin{array}{rl}
|\nabla u_n|^2\mkern-10mu &\rightharpoonup d\mu\ge |\nabla \tilde{u}|^2+\sum_{j\in\mathfrak{J}}\mu_j\delta_{x_j}+\mu_0\delta_0,\\
\\
|\nabla v_n|^2\mkern-10mu  &\rightharpoonup d\overline{\mu}\ge |\nabla \tilde{v}|^2+\sum_{k\in\mathfrak{K}}\overline{\mu}_k\delta_{y_k}+\overline{\mu}_0\delta_0,\\
\\
|u_n|^{2^*} \mkern-15mu &\rightharpoonup d\rho= |\tilde{u}|^{2^*}+\sum_{j\in\mathfrak{J}}\rho_j\delta_{x_j}+\rho_0\delta_0,\\
\\
|v_n|^{2^*} \mkern-15mu  &\rightharpoonup d\overline{\rho}= |\tilde{v}|^{2^*}+\sum_{k\in\mathfrak{K}}\overline{\rho}_k\delta_{y_k}+\overline{\rho}_0\delta_0,\\
\\
\dfrac{u_n^2}{|x|^2} \mkern-10mu &\rightharpoonup d\gamma=\dfrac{\tilde{u}^2}{|x|^2}+\gamma_0\delta_0,\\
\\
\dfrac{v_n^2}{|x|^2} \mkern-10mu &\rightharpoonup d\overline{\gamma}=\dfrac{\tilde{v}^2}{|x|^2}+\overline{\gamma}_0\delta_0.
\end{array}
\right.
\end{equation}
Note that, thanks to the Sobolev and Hardy inequalities (\eqref{sobolev_inequality} and \eqref{hardy_inequality} resp.) the above numbers satisfy the inequalities
\begin{equation}\label{ineq:sobcon}
\begin{split}
\mathcal{S}\rho_j^{\frac{2}{2^*}}&\leq\mu_j\qquad\text{for all }j\in\mathfrak{J}\cup\{0\},\\
\mathcal{S}\overline{\rho}_k^{\frac{2}{2^*}}&\leq\overline{\mu}_k\qquad\text{for all }k\in\mathfrak{K}\cup\{0\},
\end{split}
\end{equation}
and
\begin{equation}\label{ineq:harcon}
\begin{split}
\Lambda_N\gamma_0&\leq\mu_0,\\
\Lambda_N\overline{\gamma}_0&\leq\overline{\mu}_0.
\end{split}
\end{equation}
The concentration at infinity of the sequence $\{u_n\}$ is encoded by the numbers
\begin{equation}\label{con-compinfty}
\begin{split}
\mu_{\infty}&=\lim\limits_{R\to+\infty}\limsup\limits_{n\to+\infty}\int_{|x|>R}|\nabla u_n|^{2}dx,\\
\rho_{\infty}&=\lim\limits_{R\to+\infty}\limsup\limits_{n\to+\infty}\int_{|x|>R}|u_n|^{2^*}dx,\\
\gamma_{\infty}&=\lim\limits_{R\to+\infty}\limsup\limits_{n\to+\infty}\int_{|x|>R}\frac{u_n^{2}}{|x|^2}dx.
\end{split}
\end{equation}
The concentration at infinity of the sequence $\{v_n\}$ is encoded by the numbers $\overline{\mu}_{\infty}$, $\overline{\rho}_{\infty}$ and $\overline{\gamma}_{\infty}$ defined analogously.\newline
Next, for $j\in\mathfrak{J}$, let $\varphi_{j,\varepsilon}(x)$ be a smooth cut-off function centered at $x_j$, i.e., $\varphi_{j,\varepsilon}\in C^{\infty}(\mathbb{R}^+_0)$ and
\begin{equation}\label{cutoff}
\varphi_{j,\varepsilon}=1 \quad \hbox{in}\quad B_{\frac{\varepsilon}{2}}(x_j),\quad \varphi_{j,\varepsilon}=0 \quad \hbox{in}
\quad B_{\varepsilon}^c(x_j)\quad \hbox{and}\quad\displaystyle|\nabla \varphi_{j,\varepsilon}|\leq \frac{4}{\varepsilon},
\end{equation}
where $B_r(x_j)$ is the ball of radius $r>0$ centered at the point $x_j\in\mathbb{R}^N$. Thus, testing $\mathcal{J}_{\nu}'(u_n,v_n)$ with $(u_n\varphi_{j,\varepsilon},0)$ we get
\begin{equation}\label{eq:concompact1}
\begin{split}
0&=\lim\limits_{n\to+\infty}\left\langle \mathcal{J}_{\nu}'(u_n,v_n)\big|(u_n\varphi_{j,\varepsilon},0)\right\rangle\\
&=\lim\limits_{n\to+\infty}\left(\int_{\mathbb{R}^N}|\nabla u_n|^2\varphi_{j,\varepsilon}dx+\int_{\mathbb{R}^N}u_n\nabla u_n\nabla\varphi_{j,\varepsilon}dx-\lambda_1\int_{\mathbb{R}^N}\frac{u_n^2}{|x|^2}\varphi_{j,\varepsilon}dx\right.\\
&\mkern+80mu-\left.\int_{\mathbb{R}^N}|u_n|^{2^*}\varphi_{j,\varepsilon}dx-\nu\alpha\int_{\mathbb{R}^N}h(x)|u_n|^\alpha  |v_n|^\beta  \varphi_{j,\varepsilon} dx\right)\\
&=\int_{\mathbb{R}^N}\varphi_{j,\varepsilon}d\mu+\int_{\mathbb{R}^N}u_0\nabla u_0\nabla\varphi_{j,\varepsilon}dx-\lambda_1\int_{\mathbb{R}^N}\varphi_{j,\varepsilon}d\gamma\\
&\mkern+25mu-\int_{\mathbb{R}^N}\varphi_{j,\varepsilon}d\rho-\nu\alpha\int_{\mathbb{R}^N}h(x)|u_0|^\alpha |v_0|^{\beta} \varphi_{j,\varepsilon} \, dx.
\end{split}
\end{equation}
Taking $\varepsilon>0$ small enough, then $0\notin supp(\varphi_{j,\varepsilon})$. Thus, since $h\in L^{\infty}(\mathbb{R}^N)$, letting $\varepsilon\to0$ we get $\mu_j-\rho_j\leq0$ from the expression \eqref{eq:concompact1}. Then, we have two options either the PS sequence has a convergent subsequence or it concentrates around some of the points $x_j$.\newline
In other words,
\begin{equation}\label{afr}
\text{Either }\rho_j=0\ \forall j\in\mathfrak{J}\quad \text{ or, by \eqref{ineq:sobcon},}\quad\rho_j\ge \mathcal{S}^{\frac{N}{2}}\quad\forall j\in\mathfrak{J},\text{ so that }\mathfrak{J}\text{ is finite}.
\end{equation}
By a similar argument the same conclusion holds for the numbers $\overline{\rho}_k$, i.e.,
\begin{equation}\label{afr2}
\text{Either }\overline{\rho}_k=0\ \forall k\in\mathfrak{K}\quad \text{ or, by \eqref{ineq:sobcon},}\quad\overline{\rho}_k\ge \mathcal{S}^{\frac{N}{2}}\quad\forall k\in\mathfrak{K},\text{ so that }\mathfrak{K}\text{ is finite}.
\end{equation}
As we defined $\varphi_{j,\varepsilon}$ in \eqref{cutoff}, consider now $\varphi_{0,\varepsilon}$ a smooth cut-off function centered at the origin. Analogously, testing $\mathcal{J}_{\nu}'(u_n,v_n)$ with $(u_n\varphi_{0,\varepsilon},0)$ , we obtain $\mu_0-\lambda_1\gamma_0-\rho_0\leq 0$ and $\overline{\mu}_0-\lambda_2\overline{\gamma}_0-\overline{\rho}_0\leq0$. Taking in mind \eqref{ineq:harcon} and the definition of the constant $\mathcal{S}(\lambda)$ given in \eqref{Slambda}, we find
\begin{equation}\label{ineq:con0_a}
\begin{split}
\mu_0-\lambda_1\gamma_0&\ge \mathcal{S}(\lambda_1)\rho_0^{\frac{2}{2^*}},\\
\overline{\mu}_0-\lambda_2\overline{\gamma}_0&\ge \mathcal{S}(\lambda_2)\overline{\rho}_0^{\frac{2}{2^*}},
\end{split}
\end{equation}
from where we conclude
\begin{equation}\label{ineq:con0}
\begin{split}
\rho_0&=0\quad\text{or}\quad \rho_0\ge \mathcal{S}^{\frac{N}{2}}(\lambda_1),\\
\overline{\rho}_0&=0\quad\text{or}\quad \overline{\rho}_0\ge \mathcal{S}^{\frac{N}{2}}(\lambda_2).
\end{split}
\end{equation}
Finally, for $R>0$ sufficiently large such that $\{x_j\}_{j\in\mathfrak{J}}\cup \{0\} \subset  B_{R}(0)$, consider $\varphi_{\infty,\varepsilon}$ a cut-off function supported near $\infty$, i.e.,
\begin{equation}\label{cutoffinfi}
\varphi_{\infty,\varepsilon}=0 \quad \hbox{in}\quad B_{R}(0),\quad \varphi_{\infty,\varepsilon}=1 \quad \hbox{in}
\quad B_{R+1}^c(0)\quad \hbox{and}\quad\displaystyle|\nabla \varphi_{\infty,\varepsilon}|\leq \frac{4}{\varepsilon}.
\end{equation}

Testing $\mathcal{J}_{\nu}'(u_n,v_n)$ with $(u_n\varphi_{\infty,\varepsilon},0)$, we can similarly prove that $\mu_{\infty}-\lambda_1\gamma_{\infty}-\rho_{\infty}\leq 0$ as well as $\overline{\mu}_{\infty}-\lambda_2\overline{\gamma}_{\infty}-\overline{\rho}_{\infty}\leq0$ and, as above, we therefore find
\begin{equation}\label{ineq:coninf_a}
\begin{split}
\mu_{\infty}-\lambda_1\gamma_{\infty}&\ge \mathcal{S}(\lambda_1)\rho_{\infty}^{\frac{2}{2^*}},\\
\overline{\mu}_{\infty}-\lambda_2\overline{\gamma}_{\infty}&\ge \mathcal{S}(\lambda_2)\overline{\rho}_{\infty}^{\frac{2}{2^*}},
\end{split}
\end{equation}
and we also conclude
\begin{equation}\label{ineq:coninf}
\begin{split}
\rho_{\infty}&=0\quad\text{or}\quad \rho_{\infty}\ge \mathcal{S}^{\frac{N}{2}}(\lambda_1),\\
\overline{\rho}_{\infty}&=0\quad\text{or}\quad \overline{\rho}_{\infty}\ge \mathcal{S}^{\frac{N}{2}}(\lambda_2).
\end{split}
\end{equation}
Next, from \eqref{eq:limit2} we get
\begin{equation*}
c=\left(\frac{1}{2}-\frac{1}{\alpha+\beta} \right)\|(u_n,v_n)\|_{\mathbb{D}}^2+\left(\frac{1}{\alpha+\beta}- \frac{1}{2^*} \right)\left( \|u_n\|_{L^{2^*}}^{2^*}+\|v_n\|_{L^{2^*}}^{2^*} \right)+o(1).
\end{equation*}
Hence, because of \eqref{con-comp}, \eqref{ineq:sobcon}, \eqref{ineq:harcon}, \eqref{ineq:con0_a} and \eqref{ineq:coninf_a} above, we find
\begin{equation}\label{ineq:larga}
\begin{split}
c\ge& \left(\frac{1}{2}-\frac{1}{\alpha+\beta} \right)\Bigg(\|(\tilde{u},\tilde{v})\|_{\mathbb{D}}^2+ \sum_{j\in\mathfrak{J}}\mu_j + (\mu_0-\lambda_1\gamma_0)+(\mu_{\infty}-\lambda_1\gamma_{\infty}) \\
&\mkern+105mu +\left. \sum_{k\in\mathfrak{K}} \overline{\mu}_k+ (\overline{\mu}_0-\lambda_2\overline{\gamma}_0)+(\overline{\mu}_{\infty}-\lambda_2\overline{\gamma}_{\infty})\right)\\
&+\left(\frac{1}{\alpha+\beta}- \frac{1}{2^*} \right)\left(\int_{\mathbb{R}^N}|\tilde{u}|^{2^*}dx+\int_{\mathbb{R}^N}|\tilde{v}|^{2^*}dx \right. \\
& \mkern+120mu +  \sum_{j\in\mathfrak{J}}\rho_j+\rho_0+\rho_{\infty}+\left. \sum_{k\in\mathfrak{K}}\overline{\rho}_k+\overline{\rho}_0+\overline{\rho}_{\infty} \right) \\
\ge&\left(\frac{1}{2}-\frac{1}{\alpha+\beta} \right)\left( \mathcal{S}\left[\sum_{j\in\mathfrak{J}}\rho_j^{\frac{2}{2^*}}+\sum_{k\in\mathfrak{K}}\overline{\rho}_k^{\frac{2}{2^*}}\right]+\mathcal{S}(\lambda_1)\left[\rho_0^{\frac{2}{2^*}}+\rho_{\infty}^{\frac{2}{2^*}}\right]+\mathcal{S}(\lambda_2)\left[\overline{\rho}_0^{\frac{2}{2^*}}+\overline{\rho}_{\infty}^{\frac{2}{2^*}}\right]\right)\\
&+\left(\frac{1}{\alpha+\beta}- \frac{1}{2^*} \right)\left(\sum_{j\in\mathfrak{J}}\rho_j+\rho_0+\rho_{\infty}+\sum_{k\in\mathfrak{K}}\overline{\rho}_k+\overline{\rho}_0+\overline{\rho}_{\infty} \right).
\end{split}
\end{equation}
In case of having concentration at the point $x_j$, i.e., $\rho_j>0$,  from above and \eqref{afr} we find
\begin{equation*}
c\ge\left(\frac{1}{2}-\frac{1}{\alpha+\beta} \right)\mathcal{S}^{1+\frac{N}{2}\frac{2}{2^*}}+\left(\frac{1}{\alpha+\beta}- \frac{1}{2^*} \right)\mathcal{S}^{\frac{N}{2}}=\frac{1}{N}\mathcal{S}^{\frac{N}{2}},
\end{equation*}
and we reach a contradiction with the hypothesis on the energy level $c$  \eqref{hyplemmaPS2}. Then, we conclude $\rho_j=\mu_j=0$ for all $j\in\mathfrak{J}$. Arguing similarly we also conclude that $\overline{\rho}_k=\overline{\mu}_k=0$ for all $k\in\mathfrak{K}$.\newline
If $\rho_0\neq0$ from the above chain of inequalities and \eqref{ineq:con0} we get
\begin{equation*}
c\ge\frac{1}{N}\mathcal{S}^{\frac{N}{2}}(\lambda_1),
\end{equation*}
and we also reach a contradiction with the hypothesis on the energy level $c$. Then, $\rho_0=0$. Similarly, we also get $\overline{\rho}_0=0$. Arguing as above and using \eqref{ineq:coninf} we also find $\rho_{\infty}=0$ and $\overline{\rho}_{\infty}=0$. Hence, there exists a subsequence that strongly converges in $L^{2^*}(\mathbb{R}^N)\times L^{2^*}(\mathbb{R}^N)$. As a consequence, we have
\begin{equation*}
\|(u_n-\tilde{u},v_n-\tilde{v})\|_{\mathbb{D}}^2 = \left\langle \mathcal{J}_\nu'(u_n,v_n)\big| (u_n-\tilde{u},v_n-\tilde{v}) \right\rangle + o(1),
\end{equation*}
and, thus, the sequence $\{(u_n,v_n)\}$ strongly converges in $\mathbb{D}$ and the PS condition holds.

\end{proof}

The next result improves Lemma~\ref{lemmaPS2} by extending the PS condition to supercritical energy levels, excluding multiples or combinations of the critical ones. In fact, this is a generalization of \cite[Lemma 3.5]{AbFePe} for system \eqref{system:alphabeta}. Whereas in the aforementioned result it is assumed that $\min\{\alpha,\beta\}\ge 2$, we weaken the hypotheses to require that $\max\{\alpha,\beta\}\ge 2$. The main idea of the proof is that if a sequence of solutions to \eqref{system:alphabeta} does not strongly converge in $\mathbb{D}$, then we can quantify the energy levels. As a consequence, we obtain the admissible energy levels which allow us to pass to the limit. In order to do it, we shall apply Lemma~\ref{algelemma} into the component whose exponent is bigger than or equal to $2$.

\

To study positive solutions of \eqref{system:alphabeta}, it will be useful to consider the modified problem
\beq\label{systemp}
\left\{\begin{array}{ll}
\displaystyle -\Delta u - \lambda_1 \frac{u}{|x|^2}-(u^+)^{2^*-1}=  \nu\alpha h(x) (u^+)^{\alpha-1}\, (v^+)^{\beta}  &\text{in }\mathbb{R}^N,\vspace{.3cm}\\
\displaystyle -\Delta v - \lambda_2 \frac{v}{|x|^2}-(v^+)^{2^*-1}= \nu \beta h(x)  (u^+)^{\alpha}\, (v^+)^{\beta-1} &\text{in }\mathbb{R}^N,
\end{array}\right.
\eeq
where  $u^+=\max\{u,0\}$. We also denote the negative part of the function $u$ as $u^-=\min\{u,0\}$, so that $u=u^+ + u^-$. Observe that a solution $(u,v)$ of \eqref{system:alphabeta} satisfies \eqref{systemp}.

The above system admits a variational structure and its solutions correspond to critical points of the energy functional
\begin{equation}\label{funct:SKdVp}
\mathcal{J}^+_\nu (u,v)=\|(u,v)\|^2_{\mathbb{D}}
- \frac{1}{2^*} \left( \|u^+ \|_{L^{2^*}}^{2^*}+\|v^+ \|_{L^{2^*}}^{2^*}  \right) -\nu \int_{\mathbb{R}^N} h(x) (u^+)^{\alpha}\, (v^+)^{\beta} \, dx
\end{equation}
defined in $\mathbb{D}$. Besides, we shall denote $\mathcal{N}^+_\nu$ the Nehari manifold associated to $\mathcal{J}^+_\nu $. More precisely,
\begin{equation*}
\mathcal{N}^+_\nu=\left\{ (u,v) \in \mathbb{D} \setminus (0,0) \, : \,  \left\langle (\mathcal{J}_\nu^+)' (u,v){\big|}(u,v)\right\rangle=0 \right\}.
\end{equation*}

Given $(u,v) \in \mathcal{N}^+_\nu$, the following identity is satisfied
\begin{equation} \label{Nnueqp}
\|(u,v)\|_\mathbb{D}^2=\left( \|u^+ \|_{L^{2^*}}^{2^*}+\|v^+ \|_{L^{2^*}}^{2^*}  \right) +\nu (\alpha+\beta) \int_{\mathbb{R}^N} h(x) (u^+)^{\alpha} (v^+)^{\beta} \, dx.
\end{equation}

Let us also note that, on the Nehari manifold $\mathcal{N}^+_\nu$, the functional $\mathcal{J}^+_{\nu}$ can be written as
\beq\label{Nnueqp1}
\mathcal{J}^+_{\nu}{\big|}_{\mathcal{N}_\nu} (u,v) = \left( \frac{1}{2}-\frac{1}{\alpha+\beta} \right) \|(u,v)\|^2_{\mathbb{D}} +  \left( \frac{1}{\alpha+\beta}-\frac{1}{2^*} \right) \left( \|u^+ \|_{L^{2^*}}^{2^*}+\|v^+ \|_{L^{2^*}}^{2^*}  \right).
\eeq

\

\begin{lemma}\label{lemmaPS1}
Assume $\alpha+\beta<2^*$ and \eqref{H}, $\alpha\ge2$, $\lambda_2 \ge \lambda_1$ and
\begin{equation}\label{PS0}
\mathcal{S}^{\frac{N}{2}}(\lambda_1)+\mathcal{S}^{\frac{N}{2}}(\lambda_2)<\mathcal{S}^{\frac{N}{2}}.
\end{equation}
Then, there exists $\tilde{\nu}>0$ such that, if $0<\nu\leq\tilde{\nu}$ and $\{(u_n,v_n)\} \subset \mathbb{D}$ is a PS sequence for $\mathcal{J}^+_\nu$ at level $c\in\mathbb{R}$ such that
\beq\label{PS1}
\frac{1}{N} \mathcal{S}^{\frac{N}{2}}(\lambda_2)<c<\frac{1}{N} \left(\mathcal{S}^{\frac{N}{2}}(\lambda_1)+\mathcal{S}^{\frac{N}{2}}(\lambda_2) \right),
\eeq
and
\beq\label{PS2}
c\neq \frac{\ell}{N} \mathcal{S}^{\frac{N}{2}}(\lambda_2) \quad \mbox{ for every } \ell \in \mathbb{N}\setminus \{0\},
\eeq
then $(u_n,v_n)\to(\tilde{u},\tilde{v}) \in \mathbb{D}$ up to subsequence.
\end{lemma}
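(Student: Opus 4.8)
The plan is to run the concentration--compactness machinery exactly as in the proof of Lemma~\ref{lemmaPS2}, but now with the sharper energy thresholds dictated by the Hardy weights, and to discard every non-compact profile by combining the window \eqref{PS1}--\eqref{PS2} with the algebraic Lemma~\ref{algelemma} applied to the first component (whose exponent $\alpha\ge2$). First I would use the $\mathcal{J}^+_\nu$-version of Lemma~\ref{lemmaPS0} to deduce that $\{(u_n,v_n)\}$ is bounded in $\mathbb{D}$; hence, up to a subsequence, $(u_n,v_n)\rightharpoonup(\tilde u,\tilde v)$ weakly in $\mathbb{D}$, strongly in $L^q_{\mathrm{loc}}$ and a.e. Passing to the limit in $(\mathcal{J}^+_\nu)'(u_n,v_n)=o(1)$ shows that $(\tilde u,\tilde v)$ is a weak solution of \eqref{systemp}; testing with $(\tilde u^-,\tilde v^-)$ gives $\tilde u,\tilde v\ge0$, so $(\tilde u,\tilde v)\in\mathcal{N}^+_\nu\cup\{(0,0)\}$ and $\mathcal{J}^+_\nu(\tilde u,\tilde v)\ge0$.

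Next I would reproduce the measure decomposition \eqref{con-comp} together with the threshold inequalities \eqref{ineq:con0}--\eqref{ineq:coninf}, which confine the loss of mass to interior points (each carrying $\ge\mathcal{S}^{N/2}$), to the origin and to infinity (each first-component bubble carrying $\ge\mathcal{S}^{N/2}(\lambda_1)$ and each second-component bubble $\ge\mathcal{S}^{N/2}(\lambda_2)$). Rewriting the energy in Nehari form (the $\mathcal{J}^+_\nu$-analogue of \eqref{Nnueq2}) and letting $n\to\infty$ yields
\[
c=\frac1N\Big(\|\tilde u\|_{L^{2^*}}^{2^*}+\|\tilde v\|_{L^{2^*}}^{2^*}+\sum_{j}\rho_j+\sum_{k}\overline{\rho}_k+\rho_0+\rho_\infty+\overline{\rho}_0+\overline{\rho}_\infty\Big)+\nu\,\frac{\alpha+\beta-2}{2}\,R,
\]
where $R\ge0$ is the limit of the coupling integral. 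As every term is nonnegative, \eqref{PS0} and the upper bound in \eqref{PS1} forbid any interior bubble (it would give $c\ge\frac1N\mathcal{S}^{N/2}>\frac1N(\mathcal{S}^{N/2}(\lambda_1)+\mathcal{S}^{N/2}(\lambda_2))$) and forbid a first- and a second-component bubble to coexist (that would give $c\ge\frac1N(\mathcal{S}^{N/2}(\lambda_1)+\mathcal{S}^{N/2}(\lambda_2))$).

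It remains to exclude the surviving single-species profiles, and this is where $\alpha\ge2$ is used. Testing $(\mathcal{J}^+_\nu)'(u_n,v_n)$ with $(u_n\varphi_{0,\varepsilon},0)$ and $(u_n\varphi_{\infty,\varepsilon},0)$ as in \eqref{eq:concompact1}, but retaining the coupling term and estimating it by $\|h\|_\infty$ and H\"older through a power $\sigma^{\frac\alpha2\frac{N-2}{N}}$ of the first critical mass $\sigma=\|u_n\|_{L^{2^*}}^{2^*}$, puts $\sigma$ into the set $\Sigma_\nu$ of Lemma~\ref{algelemma} with $\gamma=\alpha$ and $A=\mathcal{S}(\lambda_1)$; for $\nu\le\tilde\nu$ this forces $\sigma=0$ or $\sigma\ge(1-\varepsilon)\mathcal{S}^{N/2}(\lambda_1)$, i.e. the first critical mass cannot leak. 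Using this dichotomy, the solution character of $(\tilde u,\tilde v)$ (a non-trivial $\tilde v$ with $\tilde u=0$ carries energy $\ge\frac1N\mathcal{S}^{N/2}(\lambda_2)$) and $\lambda_2\ge\lambda_1$ (so $\mathcal{S}^{N/2}(\lambda_2)\le\mathcal{S}^{N/2}(\lambda_1)$), the window \eqref{PS1} forces the weak limit to be trivial and leaves only finitely many second-component bubbles, for which $c=\frac{\ell}{N}\mathcal{S}^{N/2}(\lambda_2)$ for some $\ell\in\mathbb{N}\setminus\{0\}$---exactly the values excluded by \eqref{PS2}. Hence all concentration coefficients vanish, $u_n\to\tilde u$ and $v_n\to\tilde v$ in $L^{2^*}(\mathbb{R}^N)$, and the identity $\|(u_n-\tilde u,v_n-\tilde v)\|_{\mathbb{D}}^2=\langle(\mathcal{J}^+_\nu)'(u_n,v_n)\,|\,(u_n-\tilde u,v_n-\tilde v)\rangle+o(1)$ upgrades this to strong convergence in $\mathbb{D}$.

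The step I expect to be the real obstacle is the control of the coupling integral $R$. Since $L^{2^*}(\mathbb{R}^N)$ does not embed into any lower $L^p(\mathbb{R}^N)$, the bound of $\int h\,u_n^\alpha v_n^\beta$ by a power of the first critical mass cannot be obtained globally and must be localized around each concentration point; there the concentrating part of $u_n$, raised to the power $\alpha\ge2$, carries vanishing mass in the relevant subcritical Lebesgue norm, so that $R$ collapses to the weak-limit contribution. It is precisely this localized estimate that feeds Lemma~\ref{algelemma} and makes the non-vanishing of the first critical mass, and hence the energy quantization above, rigorous. A second delicate point is the borderline profile consisting of a single first-component bubble over a trivial limit, sitting at the level $\frac1N\mathcal{S}^{N/2}(\lambda_1)$; ruling it out within the window is exactly where the ordering $\lambda_2\ge\lambda_1$ and the restriction to $\max\{\alpha,\beta\}\ge2$ distinguish this argument from the symmetric case $\min\{\alpha,\beta\}\ge2$ of \cite{AbFePe}.
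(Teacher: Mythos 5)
Your skeleton up to the bubble bookkeeping is the paper's: boundedness of the PS sequence, reduction to a non-negative PS sequence for $\mathcal{J}_\nu$, the decomposition \eqref{con-comp} with the thresholds \eqref{afr}--\eqref{ineq:coninf}, and the exclusion of interior bubbles and of coexisting $u$- and $v$-bubbles via \eqref{PS0} and the upper bound in \eqref{PS1}; you also correctly identify that $\alpha\ge2$ enters through Lemma~\ref{algelemma} with $\gamma=\alpha$ and $A=\mathcal{S}(\lambda_1)$. The end-game, however, has genuine gaps. First, you apply Lemma~\ref{algelemma} to the wrong object: testing with $(u_n\varphi_{0,\varepsilon},0)$ or $(u_n\varphi_{\infty,\varepsilon},0)$ produces an inequality whose left-hand side involves only the local quantities $\mu_0-\lambda_1\gamma_0$ and $\rho_0$ (respectively their analogues at infinity), while your H\"older bound of the coupling term is through the global mass $\sigma=\|u_n\|_{L^{2^*}}^{2^*}$. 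The resulting relation $\mathcal{S}(\lambda_1)\rho_0^{2/2^*}\le\rho_0+C\nu\sigma^{\frac{\alpha}{2}\frac{N-2}{N}}$ mixes two different variables and is not of the single-variable form $A\sigma^{\frac{N-2}{N}}<\sigma+B\nu\sigma^{\frac{\gamma}{2}\frac{N-2}{N}}$ that Lemma~\ref{algelemma} requires, so no dichotomy follows; and even the global dichotomy (obtained by testing with $(u_n,0)$) would not prevent the first-component mass from splitting between the weak limit and a bubble. In the paper the lemma is applied instead to $\sigma_1=\int_{\R^N}\tilde{u}^{2^*}dx$, the mass of the \emph{weak limit}, exploiting that $(\tilde{u},\tilde{v})$ solves \eqref{system:alphabeta} exactly, so that the first equation of the system yields \eqref{eqlemmaPS13} with every term expressed through $\sigma_1$.

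Second, the assertion that ``the window \eqref{PS1} forces the weak limit to be trivial'' is precisely what must be proved, and nothing in your argument proves it: the dangerous scenario is $\tilde{u}\not\equiv0$, $\tilde{v}\not\equiv0$ \emph{and} one residual bubble at $0$ or $\infty$, and this is where essentially all the work of the paper's proof lies. The paper needs two different mechanisms according to which component carries the bubble: for a $u$-bubble it shows $(\tilde{u},\tilde{v})\in\mathcal{N}_\nu$ with $\mathcal{J}_\nu(\tilde{u},\tilde{v})=c-\rho_j/N<\frac{1}{N}\mathcal{S}^{\frac{N}{2}}(\lambda_2)$, contradicting Theorem~\ref{thm:groundstatesalphabeta}, which identifies $\tilde{c}_\nu=\frac{1}{N}\mathcal{S}^{\frac{N}{2}}(\lambda_2)$ for small $\nu$ --- an ingredient your proof never invokes; for a $v$-bubble it shows $\mathcal{J}_\nu(\tilde{u},\tilde{v})<\frac{1}{N}\mathcal{S}^{\frac{N}{2}}(\lambda_1)$ and contradicts the lower bound on $\sigma_1$ coming from Lemma~\ref{algelemma}. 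Third, your appeal to \eqref{PS2} presupposes the exact quantization $c=\frac{\ell}{N}\mathcal{S}^{\frac{N}{2}}(\lambda_2)$, but the concentration-compactness thresholds only give the inequalities $\overline{\rho}_0,\overline{\rho}_\infty\ge\mathcal{S}^{\frac{N}{2}}(\lambda_2)$, not equalities; the exact values require the global compactness theorems of Struwe \cite{Stru} (interior points) and Smets \cite{Smets} (Hardy case, concentration at $0$ and $\infty$), which can be applied only after one knows $\tilde{u}\equiv0$, so that $\{v_n\}$ is a PS sequence for the decoupled functional $\mathcal{J}_2$. Without these three repairs, the non-compact profiles inside the window \eqref{PS1} are not actually ruled out.
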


\begin{proof}

Analogously to Lemma~\ref{lemmaPS0}, any PS sequence for $\mathcal{J}_\nu^+$ is bounded in $\mathbb{D}$. Consequently,  there exists a subsequence $\{(u_n,v_n)\}$ which weakly converges to $(\tilde{u},\tilde{v}) \in \mathbb{D}$. Since $(\mathcal{J}^+_\nu)'(u_n,v_n)\to 0$ in $\mathbb{D}'$, then
\begin{equation*}
\left\langle (\mathcal{J}^+_\nu)'(u_n,v_n){\big|} (u_n^-,0)\right\rangle= \int_{\mathbb{R}^N} |\nabla u_n^-|^2 \, dx - \lambda_1 \int_{\mathbb{R}^N} \dfrac{(u_n^-)^2}{|x|^2} \, dx  \to0,
\end{equation*}
which implies that $u_n^-\to 0$ strongly in $\mathcal{D}^{1,2} (\mathbb{R}^N)$. Similarly, we can also prove $v_n^-\to 0$.
Therefore, we can consider $\{(u_n,v_n)\}$ as a non-negative PS sequence at level $c$ for the energy functional $\mathcal{J}_\nu$.\newline
Next, following closely the approach of the proof of Lemma~\ref{lemmaPS2}, we can deduce the existence of a subsequence, still denoted by $\{(u_n,v_n)\}$, two (at most countable) sets of points $\{x_j\}_{j\in\mathfrak{J}}\subset\mathbb{R}^N$ and $\{y_k\}_{k\in\mathfrak{K}}\subset\mathbb{R}^N$, and positive numbers $\{\mu_j,\rho_j\}_{j\in\mathfrak{J}}$, $\{\overline{\mu}_k,\overline{\rho}_k\}_{k\in\mathfrak{K}}$, $\mu_0$, $\rho_0$, $\gamma_0$, $\overline{\mu}_0$, $\overline{\rho}_0$ and $\overline{\gamma}_0$ such that the weak convergence given by \eqref{con-comp} is satisfied. Besides, the inequalities \eqref{afr}, \eqref{afr2}, \eqref{ineq:con0_a}, \eqref{ineq:con0} hold.\newline
In a similar way, we define the concentration at infinity provided by the values $\mu_\infty$, $\rho_\infty$, $\overline{\mu}_\infty$ and $\overline{\rho}_\infty$ as in \eqref{con-compinfty}, for which \eqref{ineq:coninf_a} and \eqref{ineq:coninf} hold.

\

Now, we claim that:
\beq\label{claimPS21}
\mbox{ either } u_n\to \tilde u\  \mbox{  strongly in } L^{2^*}(\mathbb{R}^N) \qquad \mbox{or} \qquad v_n\to \tilde v\ \mbox{  strongly in }L^{2^*}(\mathbb{R}^N).
\eeq

Assume by contradiction that $\{u_n\}$ and $\{v_n\}$ do not strongly converge in $L^{2^*}(\mathbb{R}^N)$. This implies that there exists $j\in \mathfrak{J}\cup \{0 \cup \infty\}$ and $k\in \mathfrak{J}\cup \{0 \cup \infty\}$ such that $\rho_{j}>0$ and $\overline{\rho}_k>0$. Finally, by the expressions \eqref{eq:limit2}, \eqref{afr}, \eqref{afr2}, \eqref{ineq:con0_a} and \eqref{ineq:con0} applied in \eqref{ineq:larga}, we get
\begin{equation*}
\begin{split}
c=&\left( \frac{1}{2} -\frac{1}{\alpha+\beta} \right)\|(u_n,v_n)\|_{\mathbb{D}}^2+\left(\frac{1}{\alpha+\beta}- \frac{1}{2^*} \right)\left( \|u_n \|_{L^{2^*}}^{2^*}+\|v_n \|_{L^{2^*}}^{2^*}  \right)+o(1)\\
\ge&\left( \frac{1}{2} -\frac{1}{\alpha+\beta} \right)\left(\mathcal{S}(\lambda_1)\rho_j^{\frac{2}{2^*}} +\mathcal{S}(\lambda_2) \overline{\rho}_k^{\frac{2}{2^*}}\right)+\left(\frac{1}{\alpha+\beta}- \frac{1}{2^*} \right)\left(\rho_j+\overline{\rho}_k \right)\\
\ge& \ \frac{1}{N} \left(\mathcal{S}^{\frac{N}{2}}(\lambda_1) + \mathcal{S}^{\frac{N}{2}}(\lambda_2)  \right) .
\end{split}
\end{equation*}
The previous inequality contradicts assumption \eqref{PS1}, so claim \eqref{claimPS21} is proved.

\

Subsequently, we claim that:
\beq\label{claimPS22}
\mbox{ either } u_n\to  \tilde u\  \mbox{ in } \mathcal{D}^{1,2}(\mathbb{R}^N) \qquad \mbox{ or } \qquad v_n\to  \tilde v \ \mbox{ in } \mathcal{D}^{1,2}(\mathbb{R}^N).
\eeq

Without loss of generality, by \eqref{claimPS21}, assume that the sequence $\{u_n\}$ strongly converges in $L^{2^*}(\mathbb{R}^N)$. Then, it is enough to observe that
\begin{equation*}
\|u_n-\tilde u\|_{\lambda_1}^2 = \left\langle \mathcal{J}_\nu'(u_n,v_n){\big|} (u_n-\tilde u,0) \right\rangle + o(1).
\end{equation*}
This implies that $u_n\to \tilde u$ in $\mathcal{D}^{1,2}(\mathbb{R}^N)$. Repeating the argument for the sequence $\{v_n\}$ we conclude \eqref{claimPS22}.
\

Next, we show that both components  strongly converge in $\mathcal{D}^{1,2}(\mathbb{R}^N)$. In order to do it, we shall distinguish the proof between two cases:

\

\textbf{Case 1}: The sequence $\{v_n\}$ strongly converges to $\tilde{v}$ in $\mathcal{D}^{1,2}(\mathbb{R}^N)$.

\

We want to prove that $\{u_n\}$ strongly converges to $\tilde{u}$ in $\mathcal{D}^{1,2}(\mathbb{R}^N)$. By contradiction, suppose that none of its subsequences converge. If one assumes that $\mathfrak{J}\cup \{0,\infty\}$ contains more than one point then, by combining \eqref{ineq:larga} with \eqref{afr}, \eqref{ineq:con0_a}, \eqref{ineq:con0}, \eqref{ineq:coninf_a} and \eqref{ineq:coninf}, we find
\begin{equation*}
c\ge \frac{2}{N} \mathcal{S}^{\frac{N}{2}}(\lambda_1)\ge \frac{1}{N} \left( \mathcal{S}^{\frac{N}{2}}(\lambda_1) + \mathcal{S}^{\frac{N}{2}}(\lambda_2) \right)
\end{equation*}
because of $\lambda_2\ge \lambda_1$ and $\mathcal{S}(\lambda)$ is decreasing, which contradicts assumption \eqref{PS1}. So, assume that there exists only one concentration point $x_j$, $j\in \mathfrak{J}\cup \{0,\infty\}$, for the sequence $\{u_n\}$.

Next, we shall prove that $\tilde{v}\not \equiv 0$. Assume by contradiction that $\tilde{v}\equiv 0$, then $\tilde{u}\ge 0 $ and $\tilde{u}$ verifies
\beq\label{utildeq}
-\Delta \tilde{u} - \lambda_1 \frac{\tilde{u}}{|x|^2}=\tilde{u}^{2^*-1} \qquad \mbox{ in } \mathbb{R}^N.
\eeq
Thus, $\tilde{u}=z_\mu^{\lambda_1}$ for some $\mu>0$ and $\displaystyle\int_{\R^N} \tilde{u}^{2^*} \, dx =\mathcal{S}^{\frac{N}{2}}(\lambda_1)$ by \eqref{normcrit}. Since $\{u_n\}$ concentrates at one point, by combining \eqref{ineq:larga} with \eqref{afr}, \eqref{ineq:con0_a}, \eqref{ineq:con0}, we conclude that
$$
c\ge \frac{1}{N}\left( \int_{\R^N} \tilde{u}^{2^*} \, dx +\mathcal{S}^{\frac{N}{2}}(\lambda_1)  \right) = \frac{2}{N}\mathcal{S}^{\frac{N}{2}}(\lambda_1) \ge \frac{1}{N}\left( \mathcal{S}^{\frac{N}{2}}(\lambda_1)+ \mathcal{S}^{\frac{N}{2}}(\lambda_2) \right),
$$
which contradicts the assumption \eqref{PS1}.

If $\tilde{v}\equiv 0$ and $\tilde{u}\equiv 0$, one has that $u_n$ should verify the following
$$
-\Delta u_n - \lambda_2 \frac{u_n}{|x|^2}-u_n^{2^*-1}=o(1) \qquad \mbox{ in the dual space } \left( \mathcal{D}^{1,2} (\mathbb{R}^N)\right)',
$$
and
$$
c=\mathcal{J}_\nu(u_n,v_n)+o(1)=\frac{1}{N} \int_{\R^N} u_n^{2^*}+o(1)\to \frac{1}{N} \rho_j,
$$
as $\{u_n\}$ concentrates at one point. If $j\in \mathfrak{J}$, then $\{u_n\}$ is a positive PS sequence for the functional
$$
\mathcal{J}_j(u)=\frac{1}{2} \int_{\R^N} |\nabla u|^2\, dx -\frac{1}{2^*} \int_{\R^N} |u|^{2^*} \, dx.
$$
Using the characterization of PS sequences for the functional $\mathcal{J}_j$ provided by \cite{Stru}, we determine that $\rho_j=\ell \mathcal{S}^{\frac{N}{2}}$ for some $\ell\in\mathbb{N}$, which contradicts the hypotheses \eqref{PS0} and \eqref{PS1}. We conclude that $\mathfrak{J}=\emptyset$. In case that $\{u_n\}$ concentrates at zero or infinity, we can follow an analogous approach for the energy functional $\mathcal{J}_1$, defined in \eqref{funct:Ji}, and the result provided by \cite{Smets} to obtain that
$$
c=\mathcal{J}_\nu(u_n,v_n)+o(1)=\mathcal{J}_1(u_n)+o(1)\to \frac{\ell}{N} \mathcal{S}^{\frac{N}{2}}(\lambda_1),
$$
with $\ell \in \mathbb{N}\cup \{0\}$ contradicting to \eqref{PS1}. As a consequence, we have proved $\tilde{v}\gneq 0$ in $\mathbb{R}^N$. Next, we shall prove that $u_n\weakto\tilde{u}$ in $D^{1,2}(\R^N)$ such that $\tilde{u}\not \equiv 0$. As before, reasoning by contradiction assume that $\tilde{u}=0$, then $\tilde{v}$ satisfies the problem
\beq\label{vtildeq}
-\Delta \tilde{v} - \lambda_2 \frac{\tilde{v}}{|x|^2}=\tilde{v}^{2^*-1} \qquad \mbox{ in } \mathbb{R}^N.
\eeq
Thus, $\tilde{v}=z_\mu^{\lambda_2}$ for some $\mu>0$ and $\displaystyle\int_{\R^N} \tilde{v}^{2^*} \, dx =\mathcal{S}^{\frac{N}{2}}(\lambda_2)$ by \eqref{normcrit}. As a consequence, combining \eqref{ineq:larga} with \eqref{afr}, \eqref{ineq:con0_a}, \eqref{ineq:con0}, we get
$$
c\ge \frac{1}{N}\left( \int_{\R^N} \tilde{v}^{2^*} \, dx +\mathcal{S}^{\frac{N}{2}}(\lambda_1)  \right) = \frac{1}{N}\left( \mathcal{S}^{\frac{N}{2}}(\lambda_1)+ \mathcal{S}^{\frac{N}{2}}(\lambda_2) \right),
$$
contradicting \eqref{PS1}. Thus, $\tilde{u},\tilde{v} \not \equiv 0$. Next, taking the equality
\begin{equation}\label{eqlemmaPS10}
\begin{split}
c&=\mathcal{J}_\nu(u_n,v_n)-\frac{1}{2}\left\langle \mathcal{J}_\nu'(u_n,v_n){\big|} (u_n,v_n) \right\rangle + o(1)\\
& = \frac{1}{N}\left( \|u_n \|_{L^{2^*}}^{2^*}+\|v_n \|_{L^{2^*}}^{2^*}  \right)+ \nu \frac{\alpha+\beta-2}{2} \int_{\R^N} h(x) u_n^\alpha v_n^\beta \, dx + o(1)\\
& \to \frac{1}{N} \left( \|\tilde u \|_{L^{2^*}}^{2^*}+\|\tilde v \|_{L^{2^*}}^{2^*}  \right)  + \frac{\rho_j}{N} +   \nu \frac{\alpha+\beta-2}{2} \int_{\R^N} h(x) \tilde{u}^\alpha \tilde{v}^{\beta} \, dx \quad\text{as } n\to+\infty,
\end{split}
\end{equation}
by the concentration at $j \in \mathfrak{J}\cup \{ 0, \infty\}$.

Since $\left\langle \mathcal{J}_\nu'(u_n,v_n){\big|} (\tilde{u},\tilde{v}) \right\rangle \to 0$, ones arrives at the expression
$$
\|(\tilde{u},\tilde{v}) \|_{\mathbb{D}}=  \|\tilde u \|_{L^{2^*}}^{2^*}+\|\tilde v \|_{L^{2^*}}^{2^*}  + \nu(\alpha+\beta) \int_{\R^N} h(x) \tilde{u}^\alpha \tilde{v}^\beta \, dx,
$$
which is equivalent to say that $(\tilde{u},\tilde{v}) \in\mathcal{N}_\nu$. Actually, using \eqref{eqlemmaPS10}, \eqref{eqlemmaPS11}, \eqref{Nnueq}, \eqref{afr}, \eqref{ineq:con0_a}, \eqref{ineq:con0} and  \eqref{ineq:larga}, we have that
\begin{equation*}
\begin{split}
\mathcal{J}_\nu(\tilde{u},\tilde{v})&=\frac{1}{N}\left( \|\tilde u \|_{L^{2^*}}^{2^*}+\|\tilde v  \|_{L^{2^*}}^{2^*}  \right) + \nu \frac{\alpha+\beta-2}{2} \int_{\R^N} h(x) \tilde{u}^\alpha \tilde{v}^\beta \, dx \\
&=c - \frac{\rho_j}{N}\\
&<\frac{1}{N}\left( \mathcal{S}^{\frac{N}{2}}(\lambda_1)+ \mathcal{S}^{\frac{N}{2}}(\lambda_2) \right) - \frac{1}{N} \mathcal{S}^{\frac{N}{2}}(\lambda_1)\\
&= \frac{1}{N} \mathcal{S}^{\frac{N}{2}}(\lambda_2).
\end{split}
\end{equation*}
The above expression implies that
$$
\tilde{c}_\nu= \inf_{(u,v)\in\mathcal{N}_\nu} \mathcal{J}_\nu(u,v) < \frac{1}{N} \mathcal{S}^{\frac{N}{2}}(\lambda_2).
$$
However, for $\nu$ sufficiently small, Theorem~\ref{thm:groundstatesalphabeta} states that $\tilde c_\nu =  \frac{1}{N} \mathcal{S}^{\frac{N}{2}}(\lambda_2)$, which contradicts the former inequality. Thus, we have proved that $u_n\to \tilde{u}$ strongly in $\mathcal{D}^{1,2}(\R^N)$.

\

\textbf{Case 2}: The sequence $\{u_n\}$ strongly converges to $\tilde{u}$ in $\mathcal{D}^{1,2}(\mathbb{R}^N)$.

\

We want to prove that $\{v_n\}$ strongly converges to $\tilde{v}$ in $\mathcal{D}^{1,2}(\mathbb{R}^N)$. By contradiction, suppose that none of its subsequences converge. We start by proving that $\tilde{u}\not \equiv 0$. Assuming that $\tilde{u} \equiv 0$ by contradiction, then $\{v_n\}$ is a PS sequence for the energy functional $\mathcal{J}_2$ defined in \eqref{funct:Ji} at energy level $c$.

Since $v_n\weakto\tilde{v}$ in $\mathcal{D}^{1,2}(\R^N)$, where $\tilde{v}$ satisfies the entire problem \eqref{vtildeq}, then, $\tilde{v}=z_\mu^{\lambda_2}$ for some $\mu>0$.  Moreover, applying the compactness theorem given by \cite{Smets} and \eqref{Jzeta}, one has
$$
c= \lim_{n\to+\infty} \mathcal{J}_2 (v_n) =  \mathcal{J}_2 (z_\mu^{\lambda_2})+ \frac{m}{N} \mathcal{S}^{\frac{N}{2}}+ \frac{\ell}{N} \mathcal{S}^{\frac{N}{2}}(\lambda_2)=\frac{m}{N} \mathcal{S}^{\frac{N}{2}}+ \frac{\ell+1}{N} \mathcal{S}^{\frac{N}{2}}(\lambda_2),
$$
where $m\in \mathbb{N}$ and $\ell \in \mathbb{N}\cup \{0\}$, in contradiction with \eqref{PS1} and \eqref{PS2}. So, we can conclude that $\tilde{u} \not \equiv 0$.

Conversely, if one assumes that $\tilde v \equiv 0$, then one has that $\tilde{u}$ solves to \eqref{utildeq}, which implies that $u=z_\mu^{\lambda_1}$ for some $\mu>0$. Subsequently, as we did above in \textbf{Case 1},
$$
c\ge \frac{1}{N}\left( \int_{\R^N} \tilde{u}^{2^*} \, dx +\mathcal{S}^{\frac{N}{2}}(\lambda_2)  \right) =   \frac{1}{N}\left( \mathcal{S}^{\frac{N}{2}}(\lambda_1)+ \mathcal{S}^{\frac{N}{2}}(\lambda_2) \right),
$$
contradicting \eqref{PS1}. Therefore, we can infer that $\tilde{u},\tilde{v}\not \equiv 0$. Since $(\tilde{u},\tilde{v})$ is a solution of \eqref{system:alphabeta}, one can deduce that
\begin{equation}\label{eqlemmaPS11}
 \mathcal{J}_\nu(\tilde{u},\tilde{v})=\frac{1}{N}\int_{\R^N} \left(  \tilde{u}^{2^*} + \tilde{v}^{2^*} \right) \, dx + \nu \frac{\alpha+\beta-2}{2} \int_{\R^N} h(x) \tilde{u}^\alpha \tilde{v}^\beta \, dx \leq c.
\end{equation}
Using again \eqref{eqlemmaPS10} and the assumption that $v_n$ does not strongly converge  in $\mathcal{D}^{1,2}(\R^N)$, then there exists at least $k\in\mathfrak{K}\cup \{0,\infty\}$ such that $\overline{\rho}_k>0$, which implies that
$$
c= \frac{1}{N} \left( \int_{\R^N}  \left(  \tilde{u}^{2^*} + \tilde{v}^{2^*} \right)  \, dx + \sum_{k\in\mathfrak{K}} \overline{\rho}_k+ \overline{\rho}_0 + \overline{\rho}_\infty \right)  + \nu \frac{\alpha+\beta-2}{2} \int_{\R^N} h(x) \tilde{u}^\alpha \tilde{v}^\beta \, dx .
$$
By using \eqref{eqlemmaPS11}, \eqref{afr2}, \eqref{ineq:con0_a}, \eqref{ineq:con0} and \eqref{PS1}, one gets
\beq\label{eqlemmaPS12}
\begin{split}
\mathcal{J}_\nu(\tilde{u},\tilde{v})&= c - \frac{1}{N} \sum_{k\in\mathfrak{K}} \overline{\rho}_k+ \overline{\rho}_0 + \overline{\rho}_\infty \\
& < \frac{1}{N}\left( \mathcal{S}^{\frac{N}{2}}(\lambda_1)+ \mathcal{S}^{\frac{N}{2}}(\lambda_2) \right) - \frac{1}{N} \mathcal{S}^{\frac{N}{2}}(\lambda_2)\\
& =  \frac{1}{N} \mathcal{S}^{\frac{N}{2}}(\lambda_1). \end{split}
\eeq
If we apply the definition of $\mathcal{S}^{\frac{N}{2}}(\lambda_1)$ and the first equation of \eqref{system:alphabeta}, we get
\beq\label{eqlemmaPS13}
\sigma_1 + \nu \int_{\R^N} h(x) \tilde{u}^\alpha \tilde{v}^\beta \, dx = \int_{\R^N} |\nabla \tilde{u}|^2 \, dx - \lambda_1 \int_{\R^N} \frac{\tilde{u}^2}{|x|^2} \, dx \ge \mathcal{S}(\lambda_1) \sigma^{2/2^*}_1,
\eeq
where $\displaystyle\sigma_1= \int_{\R^N} \tilde{u}^{2^*} \, dx$. By using Hölder's inequality, one gets
\beq\label{Holder}
\int_{\mathbb{R}^N} h(x)  \,  \tilde{u}^\alpha \tilde{v}^\beta \, dx \leq \|h\|_{L^{\infty}(\R^N)} \left( \int_{\mathbb{R}^N} \tilde{u}^{2^*}   \, dx \right)^{\frac{\alpha}{2^*}}\left( \int_{\mathbb{R}^N} \tilde{v}^{2^*}  \, dx   \right)^{\frac{\beta}{2^*}}.
\eeq
Combining \eqref{Holder} and \eqref{eqlemmaPS11}, we can transform \eqref{eqlemmaPS13} into
\beq\label{eqlemmaPS14}
\sigma_1 + C \nu \sigma_1^{\frac{\alpha}{2} \frac{N-2}{N}}\ge \mathcal{S}(\lambda_1) \sigma_1^\frac{N-2}{N}.
\eeq

Since $\tilde{v}\not \equiv 0$, there exits $\tilde \varepsilon>0$ such that $\displaystyle\int_{\R^N} \tilde{v}^{2^*} \, dx\ge \tilde \varepsilon$. Taking  $\varepsilon>0$ such that  $\tilde \varepsilon\ge \varepsilon \mathcal{S}^{\frac{N}{2}}(\lambda_1)$, by \eqref{eqlemmaPS14}, we can apply Lemma~\ref{algelemma} to get a fixed $\tilde \nu>0$ such that
$$
\sigma_1\ge (1-\varepsilon)\mathcal{S}^{\frac{N}{2}}(\lambda_1) \qquad \mbox{ for any } 0<\nu\leq \tilde{\nu}.
$$
From previous estimates and \eqref{eqlemmaPS11}, we obtain that
$$
\mathcal{J}_\nu(\tilde{u},\tilde{v}) \ge \frac{1}{N} \left((1-\varepsilon) \mathcal{S}^{\frac{N}{2}}(\lambda_1)+ \tilde{\varepsilon} \right)\ge  \frac{1}{N} \mathcal{S}^{\frac{N}{2}}(\lambda_1),
$$
which gives us a contradiction with \eqref{eqlemmaPS12}. Therefore, we can conclude that $v_n\to \tilde{v}$ strongly in $\mathcal{D}^{1,2}(\R^N)$.

\

In both cases, we have arrived at the conclusion that the PS sequences strongly converge in $\mathbb{D}$ to a non-vanishing limit, completing the proof.

\end{proof}

In a similar way, we can establish an analogous result in case that  $\lambda_1 \ge \lambda_2$.
\begin{lemma}\label{lemmaPS1a}
Assume $\alpha+\beta<2^*$ and \eqref{H}, $\beta\ge2$, $\lambda_1 \ge \lambda_2$ and
\begin{equation}\label{PS1a}
\mathcal{S}^{\frac{N}{2}}(\lambda_1)+\mathcal{S}^{\frac{N}{2}}(\lambda_2)<\mathcal{S}^{\frac{N}{2}}.
\end{equation}
Then, there exists $\tilde{\nu}>0$ such that, if $0<\nu\leq\tilde{\nu}$ and $\{(u_n,v_n)\} \subset \mathbb{D}$ is a PS sequence for $\mathcal{J}^+_\nu$ at level $c\in\mathbb{R}$ such that
\begin{equation}\label{PS2a}
\frac{1}{N} \mathcal{S}^{\frac{N}{2}}(\lambda_1)<c<\frac{1}{N} \left(\mathcal{S}^{\frac{N}{2}}(\lambda_1)+\mathcal{S}^{\frac{N}{2}}(\lambda_2) \right),
\end{equation}
and
\begin{equation*}
c\neq \frac{\ell}{N} \mathcal{S}^{\frac{N}{2}}(\lambda_1) \quad \mbox{ for every } \ell \in \mathbb{N}\setminus \{0\},
\end{equation*}
then $(u_n,v_n)\to(\tilde{u},\tilde{v}) \in \mathbb{D}$ up to subsequence.
\end{lemma}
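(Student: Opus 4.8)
The plan is to mirror the proof of Lemma~\ref{lemmaPS1} under the involution that exchanges the two components, namely $u\leftrightarrow v$, $\alpha\leftrightarrow\beta$ and $\lambda_1\leftrightarrow\lambda_2$. This symmetry sends the hypotheses $\alpha\ge 2$, $\lambda_2\ge\lambda_1$ of Lemma~\ref{lemmaPS1} precisely to the present ones $\beta\ge 2$, $\lambda_1\ge\lambda_2$, and it carries \eqref{PS0}, \eqref{PS1}, \eqref{PS2} to \eqref{PS1a}, \eqref{PS2a} and the displayed exclusion of multiples of $\mathcal{S}^{\frac{N}{2}}(\lambda_1)$. Concretely, I would first invoke Lemma~\ref{lemmaPS0} to obtain boundedness of the PS sequence $\{(u_n,v_n)\}$ in $\mathbb{D}$, pass to a weakly convergent subsequence $(u_n,v_n)\rightharpoonup(\tilde u,\tilde v)$, and test $(\mathcal{J}^+_\nu)'(u_n,v_n)$ against $(u_n^-,0)$ and $(0,v_n^-)$ to kill the negative parts, so that $\{(u_n,v_n)\}$ may be treated as a non-negative PS sequence for $\mathcal{J}_\nu$ at level $c$. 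The concentration--compactness decomposition \eqref{con-comp} together with \eqref{ineq:sobcon}--\eqref{ineq:coninf} then applies verbatim.

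Next I would reproduce the two dichotomies \eqref{claimPS21} and \eqref{claimPS22}: the energy estimate \eqref{ineq:larga}, combined with the strict upper bound in \eqref{PS2a}, rules out simultaneous $L^{2^*}$-concentration of both components, so at least one of $u_n,v_n$ converges strongly in $L^{2^*}(\mathbb{R}^N)$ and hence, by testing $\mathcal{J}_\nu'(u_n,v_n)$ against the corresponding difference, in $\mathcal{D}^{1,2}(\mathbb{R}^N)$. This reduces matters to two cases according to which component survives. The decisive structural point is that, since now $\beta\ge 2$, Lemma~\ref{algelemma} must be applied in the $v$-component, whose coupling exponent is $\beta$, exactly opposite to Lemma~\ref{lemmaPS1}.

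In the case where $\{v_n\}$ converges strongly to $\tilde v$ and one must upgrade $u_n$ (the analogue of \textbf{Case 2}), I would argue by contradiction assuming $u_n$ concentrates. The vanishing alternatives are excluded as before: if $\tilde u\equiv 0$ then $v_n$ is a PS sequence for the scalar functional $\mathcal{J}_2$, and the profile decompositions of \cite{Stru} and \cite{Smets} force $c=\frac{m}{N}\mathcal{S}^{\frac{N}{2}}+\frac{\ell+1}{N}\mathcal{S}^{\frac{N}{2}}(\lambda_2)$, contradicting \eqref{PS2a} after \eqref{PS1a}; if $\tilde v\equiv 0$ then $\tilde u=z_\mu^{\lambda_1}$ and the concentration of $u_n$ yields $c\ge\frac{1}{N}(\mathcal{S}^{\frac{N}{2}}(\lambda_1)+\mathcal{S}^{\frac{N}{2}}(\lambda_2))$, again impossible. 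With $\tilde u,\tilde v\gneq 0$ and $(\tilde u,\tilde v)\in\mathcal{N}_\nu$, the analogue of \eqref{eqlemmaPS12} gives $\mathcal{J}_\nu(\tilde u,\tilde v)<\frac{1}{N}\mathcal{S}^{\frac{N}{2}}(\lambda_2)$. On the other hand, testing the \emph{second} equation of \eqref{system:alphabeta} against $\tilde v$, using the definition of $\mathcal{S}(\lambda_2)$ and Hölder's inequality \eqref{Holder}, produces $\sigma_2+C\nu\,\sigma_2^{\frac{\beta}{2}\frac{N-2}{N}}\ge\mathcal{S}(\lambda_2)\,\sigma_2^{\frac{N-2}{N}}$ with $\sigma_2=\int_{\mathbb{R}^N}\tilde v^{2^*}\,dx$; since $\tilde u\not\equiv 0$ fixes $\int_{\mathbb{R}^N}\tilde u^{2^*}\,dx\ge\tilde\varepsilon$, Lemma~\ref{algelemma} with $\gamma=\beta\ge 2$ furnishes $\tilde\nu>0$ so that $\sigma_2\ge(1-\varepsilon)\mathcal{S}^{\frac{N}{2}}(\lambda_2)$ for $0<\nu\le\tilde\nu$, whence $\mathcal{J}_\nu(\tilde u,\tilde v)\ge\frac{1}{N}\mathcal{S}^{\frac{N}{2}}(\lambda_2)$, a contradiction. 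In the complementary case, where $\{u_n\}$ converges and one upgrades $v_n$ (the analogue of \textbf{Case 1}), I would instead use the ground-state identification $\tilde c_\nu=\frac{1}{N}\mathcal{S}^{\frac{N}{2}}(\lambda_1)$ valid for small $\nu$, supplied by the $\lambda_1\ge\lambda_2$ counterpart of Theorem~\ref{thm:groundstatesalphabeta}, to contradict the strict bound $\mathcal{J}_\nu(\tilde u,\tilde v)<\frac{1}{N}\mathcal{S}^{\frac{N}{2}}(\lambda_1)$ coming from the concentration of $v_n$.

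The part demanding the most care, and the genuine content beyond transcribing symbols, is bookkeeping the orderings: one must use $\lambda_1\ge\lambda_2$ and the monotonicity of $\lambda\mapsto\mathcal{S}(\lambda)$ to certify that the cheapest admissible concentration mass is $\mathcal{S}^{\frac{N}{2}}(\lambda_1)$, so that the lower endpoint $\frac{1}{N}\mathcal{S}^{\frac{N}{2}}(\lambda_1)$ in \eqref{PS2a} and the excluded multiples of $\mathcal{S}^{\frac{N}{2}}(\lambda_1)$ are exactly the thresholds the argument needs. I would also verify that the $\tilde\nu$ produced by Lemma~\ref{algelemma} is chosen uniformly small enough to be compatible with the threshold required by the ground-state statement, so that a single $\tilde\nu$ serves both cases simultaneously.
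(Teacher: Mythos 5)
Your proposal is correct and is exactly the argument the paper intends: the paper offers no separate proof of Lemma~\ref{lemmaPS1a}, stating only that it follows ``in a similar way'' from Lemma~\ref{lemmaPS1}, and your systematic swap $u\leftrightarrow v$, $\alpha\leftrightarrow\beta$, $\lambda_1\leftrightarrow\lambda_2$ — applying Lemma~\ref{algelemma} in the $v$-component via the second equation, and invoking part ii) of Theorem~\ref{thm:groundstatesalphabeta} (ground state $(z_\mu^{\lambda_1},0)$ at level $\frac{1}{N}\mathcal{S}^{\frac{N}{2}}(\lambda_1)$) in the complementary case — is precisely that symmetric transcription, with the thresholds in \eqref{PS2a} correctly tracked.
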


\

\subsection{Critical range $\alpha+\beta=2^*$}\hfill\newline

To address the critical case, more hypotheses on the function $h$ are supposed. In particular,
\beq\label{hypH}\tag{H1}
h \mbox{ satisfies \eqref{H}} , \, h \mbox{ continuous around $0$ and $\infty$ and } h(0)=\lim_{x\to+\infty} h(x)=0.
\eeq
Moreover, it is distinguished the case in which either $h$ is radial or the case of a non-radial $h$, which requires the extra assumption that $\nu$ is sufficiently small.

In order to obtain minimizing and Mountain-Pass type solutions in the critical regime, namely Theorem~\ref{thm:nugrande}, Theorem~\ref{thm:lambdaground} and Theorem~\ref{MPgeom}, we make use of the following Lemma, which extends Lemma~\ref{lemmaPS2} and Lemma~\ref{lemmaPS1}. Let us emphasize that this result is an adaptation of \cite[Lemma 4.1]{AbFePe} to our setting.\newline
In what follows, $\mathbb{D}_r$ denotes the space of radially symmetric functions in $\mathbb{D}$.
\begin{lemma}\label{lemcritic}
Assume that  $\alpha+\beta=2^*$ and \eqref{hypH}. Let $\{(u_n,v_n)\} \subset \mathbb{D}_r$ be a PS sequence for $\mathcal{J}_\nu$ at level $c\in\mathbb{R}$ such that
\begin{itemize}
\item[i)] either $c$ satisfies \eqref{hyplemmaPS2}
\item[ii)] or $c$ satisfies \eqref{PS1} and \eqref{PS2} with $\alpha\ge 2$ and $\lambda_2\ge\lambda_1$,
\item[iii)] or $c$ satisfies \eqref{PS1a} and \eqref{PS2a} with $\beta\ge 2$ and $\lambda_1\ge\lambda_2$.
\end{itemize}
Then there exists $\tilde{\nu}>0$ such that for every $0<\nu\leq \tilde{\nu}$ then $(u_n,v_n)\to(\tilde{u},\tilde{v}) \in \mathbb{D}_r$ up to subsequence.
\end{lemma}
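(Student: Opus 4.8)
The plan is to reproduce the concentration--compactness analysis of Lemmas~\ref{lemmaPS2} and~\ref{lemmaPS1}, the only genuinely new difficulty being that the coupling term $\nu\int h\,|u_n|^\alpha|v_n|^\beta$ is now critical and may therefore concentrate. The radial setting together with hypothesis \eqref{hypH} is precisely what neutralizes this difficulty.

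First I would invoke Lemma~\ref{lemmaPS0} to get boundedness of $\{(u_n,v_n)\}$ in $\mathbb{D}_r$, extract a weakly convergent subsequence with limit $(\tilde u,\tilde v)$, and apply Lions' principle to obtain the measure decomposition \eqref{con-comp} together with the concentration-at-infinity quantities \eqref{con-compinfty}. The first key reduction is that, since $u_n,v_n$ are radially symmetric, the limiting measures $d\mu,d\rho,\dots$ are rotation invariant; an atom at a point $x_j\neq0$ would force an atom of equal mass at every point of the sphere $|x|=|x_j|$, contradicting the countability (and finiteness) of the atoms in \eqref{con-comp}. Hence all the finite points in $\mathfrak{J}$ and $\mathfrak{K}$ carry zero mass and the only surviving concentration is at the origin and at infinity, encoded by $\rho_0,\overline{\rho}_0,\rho_\infty,\overline{\rho}_\infty$.

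The crucial step is then to show that the critical coupling term does not contribute to this concentration. Testing $\mathcal{J}_\nu'(u_n,v_n)$ against $(u_n\varphi_{0,\varepsilon},0)$ as in \eqref{eq:concompact1}, the new term is $\nu\alpha\int h\,|u_n|^\alpha|v_n|^\beta\varphi_{0,\varepsilon}$; by H\"older's inequality this is bounded by $(\sup_{B_\varepsilon(0)}h)\,\|u_n\|_{L^{2^*}}^\alpha\|v_n\|_{L^{2^*}}^\beta$, and since $h$ is continuous at the origin with $h(0)=0$, the supremum tends to $0$ as $\varepsilon\to0$. Thus the coupling term drops out in the limit and one recovers $\mu_0-\lambda_1\gamma_0-\rho_0\le0$, hence \eqref{ineq:con0_a} and the dichotomy \eqref{ineq:con0}. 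Using the cut-off $\varphi_{\infty,\varepsilon}$ and the hypothesis $\lim_{|x|\to\infty}h(x)=0$ from \eqref{hypH}, the same computation yields \eqref{ineq:coninf_a} and \eqref{ineq:coninf} at infinity. In other words, under \eqref{hypH} the energy inequalities \eqref{ineq:larga} hold with exactly the same right-hand side as in the subcritical case.

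With these inequalities in hand the three cases reduce essentially verbatim to the earlier lemmas. Under hypothesis (i) the bound \eqref{hyplemmaPS2} together with \eqref{ineq:con0} and \eqref{ineq:coninf} forces $\rho_0=\rho_\infty=\overline{\rho}_0=\overline{\rho}_\infty=0$, so the argument of Lemma~\ref{lemmaPS2} applies and gives strong convergence in $\mathbb{D}_r$. Under hypotheses (ii) or (iii) I would run the proof of Lemma~\ref{lemmaPS1} (resp.\ Lemma~\ref{lemmaPS1a}) almost word for word: the dichotomies \eqref{claimPS21}--\eqref{claimPS22} hold, the two-case analysis applies, and the algebraic Lemma~\ref{algelemma} is used in the component whose exponent is $\ge2$ to bound the critical mass from below, producing the threshold $\tilde{\nu}>0$ for which the contradictions close. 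The main obstacle is exactly the control of the critical coupling term at the concentration points; once \eqref{hypH} and radiality confine the concentration to $0$ and $\infty$ and make $h$ vanish there, the relevant estimates become identical to the subcritical ones and the remainder is routine.
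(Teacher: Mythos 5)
Your proposal is correct and follows essentially the same route as the paper: radial symmetry rules out atoms away from $\{0,\infty\}$ by rotation invariance versus countability, and the vanishing of $h$ at $0$ and $\infty$ from \eqref{hypH}, combined with H\"older's inequality with exponents $2^*/\alpha$ and $2^*/\beta$, kills the critical coupling term at those two points, after which everything reduces to Lemmas~\ref{lemmaPS2}, \ref{lemmaPS1} and \ref{lemmaPS1a}. The only cosmetic difference is that you control the coupling term by $\bigl(\sup_{B_\varepsilon(0)}h\bigr)\,\|u_n\|_{L^{2^*}}^{\alpha}\|v_n\|_{L^{2^*}}^{\beta}$ uniformly in $n$, while the paper passes to the limit in the measures $h|u_n|^{2^*}$, $h|v_n|^{2^*}$ and uses $h(0)=0$ to annihilate the atomic contribution; both exploit exactly the same hypothesis and are equally valid.
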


\begin{proof}
First, we note that, since $\{(u_n,v_n)\} \subset \mathbb{D}_r$ are radial functions, in case of having concentration at points different from $0$ or $\infty$ the set of concentration points is not a countable set, contradicting the \textit{concentration-compactness principle} by Lions (cf. \cite{Lions1,Lions2}).\newline
Then, arguing as in the proof of Lemma~\ref{lemmaPS2} and Lemma~\ref{lemmaPS1}, in order to avoid concentration at the origin, it is enough to prove (see \eqref{eq:concompact1}) that
\begin{equation}\label{radial:at0}
\lim\limits_{\varepsilon\to0}\limsup\limits_{n\to+\infty}\int_{\mathbb{R}^N}h(x)|u_n|^\alpha |v_n|^{\beta}\varphi_{0,\varepsilon}(x)dx=0,
\end{equation}
for $\varphi_{0,\varepsilon}$ a smooth cut-off function centered at 0 defined as in \eqref{cutoff}. Analogously, in order to avoid concentration at $\infty$, it suffices to show that
\begin{equation}\label{radial:atinf}
\lim\limits_{R\to+\infty}\limsup\limits_{n\to+\infty}\int_{|x|>R}h(x)|u_n|^\alpha |v_n|^{\beta}\varphi_{\infty,\varepsilon}(x)dx=0,
\end{equation}
where, $\varphi_{\infty,\varepsilon}$ is a cut-off function supported near $\infty$, introduced in \eqref{cutoffinfi}.

To prove \eqref{radial:at0}, we note that, by H\"older's inequality,
\begin{equation}\label{ineq:holder}
\begin{split}
\int_{\mathbb{R}^N}h(x)|u_n|^\alpha |v_n|^{\beta}\varphi_{0,\varepsilon}\,dx&\leq\left(\int_{\mathbb{R}^N}h(x)|u_n|^{2^*}\varphi_{0,\varepsilon} \, dx\right)^{\frac{\alpha}{2^*}}\left(\int_{\mathbb{R}^N}h(x)|v_n|^{2^*}\varphi_{0,\varepsilon} \, dx\right)^{\frac{\beta}{2^*}}.
\end{split}
\end{equation}
Then, because of \eqref{con-comp} and \eqref{hypH}, we get
\begin{equation*}
\begin{split}
\lim\limits_{n\to+\infty}\int_{\mathbb{R}^N}h(x)|u_n|^{2^*}\varphi_{0,\varepsilon}\,dx=&\int_{\mathbb{R}^N}h(x)|\tilde{u}|^{2^*}\varphi_{0,\varepsilon}\,dx+\rho_0h(0)\\
\leq&\int_{|x|\leq\varepsilon}h(x)|\tilde{u}|^{2^*} \, dx
\end{split}
\end{equation*}
and
\begin{equation*}
\begin{split}
\lim\limits_{n\to+\infty}\int_{\mathbb{R}^N}h(x)|v_n|^{2^*}\varphi_{0,\varepsilon}\, dx&=\int_{\mathbb{R}^N}h(x)|\tilde{v}|^{2^*}\varphi_{0,\varepsilon}\, dx+\overline{\rho}_0h(0)\\
&\leq\int_{|x|\leq\varepsilon}h(x)|\tilde{v}|^{2^*}\, dx.
\end{split}
\end{equation*}
Therefore, we conclude
\begin{equation*}
\begin{split}
\lim\limits_{\varepsilon\to0}\limsup\limits_{n\to+\infty}\int_{\mathbb{R}^N}h(x)|u_n|^\alpha |v_n|^\beta\varphi_{0,\varepsilon}\, dx&\leq\lim\limits_{\varepsilon\to0}\left(\int_{|x|\leq\varepsilon}h(x)|\tilde{u}|^{2^*}\, dx\right)^{\frac{\alpha}{2^*}}\left(\int_{|x|\leq\varepsilon}h(x)|\tilde{v}|^{2^*}\, dx\right)^{\frac{\beta}{2^*}}\\
&=0.
\end{split}
\end{equation*}
Since $\lim\limits_{|x|\to+\infty}h(x)=0$, the proof of \eqref{radial:atinf} follows in a similar way.
\end{proof}

For the non-radial case, we can prove the PS condition under the assumption that $\nu$ is sufficiently small.
\begin{lemma}\label{lemcritic2}
Suppose $\alpha+\beta=2^*$ and \eqref{hypH}. Let $\{(u_n,v_n)\} \subset \mathbb{D}$ be  a PS sequence for $\mathcal{J}_\nu$ at level $c\in\mathbb{R}$ such that \eqref{hyplemmaPS2} holds. Then, there exists $\tilde{\nu}>0$ such that, for every $\nu\leq \tilde{\nu}$, we have $(u_n,v_n)\to(\tilde{u},\tilde{v}) \in \mathbb{D}$ up to subsequence.
\end{lemma}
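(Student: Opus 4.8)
The plan is to follow the concentration--compactness scheme of Lemma~\ref{lemmaPS2} and Lemma~\ref{lemcritic}, the only genuinely new point being the control of the \emph{critical} coupling term at interior concentration points, which in the radial setting of Lemma~\ref{lemcritic} was excluded by symmetry. First I would invoke Lemma~\ref{lemmaPS0} to get that $\{(u_n,v_n)\}$ is bounded in $\mathbb{D}$, pass to the weak limit $(\tilde u,\tilde v)$, and produce the measures of \eqref{con-comp} together with the Sobolev inequalities \eqref{ineq:sobcon}. Since $\alpha+\beta=2^*$, the second coefficient in \eqref{eq:limit2} vanishes, and combining $\mathcal{J}_\nu(u_n,v_n)\to c$ with $\langle\mathcal{J}'_\nu(u_n,v_n)\,|\,(u_n,v_n)\rangle\to0$ the level can be written as $c=\frac1N(\|u_n\|_{L^{2^*}}^{2^*}+\|v_n\|_{L^{2^*}}^{2^*})+\nu\frac{2^*-2}{2}\int_{\RN}h\,|u_n|^\alpha|v_n|^\beta\,dx+o(1)$, where \emph{every} term is nonnegative. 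Passing to the limit, this yields $c\ge\frac1N$ times the total mass carried by the atoms of $d\rho$ and $d\overline\rho$, so it suffices to prove that no atom survives.

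For the atoms at the origin and at infinity I would argue exactly as in Lemma~\ref{lemcritic}: hypothesis \eqref{hypH} (that is $h(0)=\lim_{|x|\to\infty}h(x)=0$ with $h$ continuous there) forces, through the Hölder estimate \eqref{ineq:holder}, the vanishing limits \eqref{radial:at0} and \eqref{radial:atinf}, so the coupling term does not concentrate at $0$ or $\infty$. Consequently the clean Sobolev--Hardy dichotomy \eqref{ineq:con0} and \eqref{ineq:coninf} applies, and a positive atom there would give $c\ge\frac1N\mathcal{S}^{\frac N2}(\lambda_i)\ge\frac1N\min\{\mathcal{S}(\lambda_1),\mathcal{S}(\lambda_2)\}^{\frac N2}$, contradicting \eqref{hyplemmaPS2}. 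Hence $\rho_0=\rho_\infty=\overline\rho_0=\overline\rho_\infty=0$.

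The main obstacle is the remaining possibility of concentration at an interior point $x_j\ne0,\infty$, where $h$ need not vanish and the critical coupling may concentrate; this is where the smallness of $\nu$ enters. Testing $\mathcal{J}'_\nu(u_n,v_n)$ with $(u_n\varphi_{j,\varepsilon},0)$ and with $(0,v_n\varphi_{j,\varepsilon})$ and letting $\varepsilon\to0$ (the Hardy term being harmless away from the origin, as in \eqref{eq:concompact1}) I would obtain $\mathcal{S}\rho_j^{2/2^*}\le\rho_j+\nu\alpha\tau_j$ and $\mathcal{S}\overline\rho_j^{2/2^*}\le\overline\rho_j+\nu\beta\tau_j$, where $\rho_j,\overline\rho_j$ are the atoms of $d\rho,d\overline\rho$ at $x_j$ and $\tau_j$ is the atom of the coupling measure, which by Hölder and then Young's inequality satisfies $\tau_j\le\|h\|_{L^\infty}\rho_j^{\alpha/2^*}\overline\rho_j^{\beta/2^*}\le\|h\|_{L^\infty}(\rho_j+\overline\rho_j)$. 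Adding the two inequalities and using that $t\mapsto t^{2/2^*}$ is subadditive (being concave and vanishing at the origin), i.e. $\rho_j^{2/2^*}+\overline\rho_j^{2/2^*}\ge(\rho_j+\overline\rho_j)^{2/2^*}$, I arrive, with $P_j:=\rho_j+\overline\rho_j$, at the single inequality $\mathcal{S}\,P_j^{2/2^*}\le(1+\nu\,2^*\|h\|_{L^\infty})\,P_j$. If $P_j>0$ this forces $P_j\ge\big(\mathcal{S}/(1+\nu\,2^*\|h\|_{L^\infty})\big)^{N/2}$. I stress that this step uses neither $\alpha\ge2$ nor $\beta\ge2$, so it is available under the sole hypothesis $\alpha+\beta=2^*$; it plays here the role that Lemma~\ref{algelemma} played in Lemma~\ref{lemmaPS1}.

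Finally, since $\mathcal{S}>\min\{\mathcal{S}(\lambda_1),\mathcal{S}(\lambda_2)\}$, I would fix $\tilde\nu>0$ so small that $\mathcal{S}/(1+\nu\,2^*\|h\|_{L^\infty})>\min\{\mathcal{S}(\lambda_1),\mathcal{S}(\lambda_2)\}$ for all $0<\nu\le\tilde\nu$; then any interior atom would give $c\ge\frac1N P_j>\frac1N\min\{\mathcal{S}(\lambda_1),\mathcal{S}(\lambda_2)\}^{\frac N2}$, again contradicting \eqref{hyplemmaPS2}. Thus all atoms vanish and $u_n\to\tilde u$, $v_n\to\tilde v$ strongly in $L^{2^*}(\RN)$; testing $\mathcal{J}'_\nu(u_n,v_n)$ with $(u_n-\tilde u,v_n-\tilde v)$ then yields $\|(u_n-\tilde u,v_n-\tilde v)\|_{\mathbb{D}}^2\to0$, i.e. strong convergence in $\mathbb{D}$, as claimed. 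The delicate point, and the only place where $\nu$ small is really needed, is the interior-point estimate of the third paragraph: the additivity-plus-concavity trick is what replaces the radial symmetry argument of Lemma~\ref{lemcritic}.
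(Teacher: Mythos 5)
Your proposal is correct and follows essentially the same route as the paper: exclusion of atoms at $0$ and $\infty$ via \eqref{hypH} as in Lemma~\ref{lemcritic}, then at interior points testing $\mathcal{J}_\nu'$ against $(u_n\varphi_{j,\varepsilon},0)$ and $(0,v_n\varphi_{j,\varepsilon})$, bounding the coupling atom by H\"older, and summing with Sobolev to obtain $\mathcal{S}(\rho_j+\overline{\rho}_j)^{2/2^*}\le(1+2^*\nu\tilde C)(\rho_j+\overline{\rho}_j)$, which for $\nu$ small pushes any surviving atom above the level \eqref{hyplemmaPS2}. The only (immaterial) differences are that you spell out the Young/subadditivity steps the paper leaves implicit, and you bound $c$ from below by the $L^{2^*}$ atomic mass rather than by the gradient mass $\mu_j+\overline{\mu}_j$; both give the same threshold.
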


\begin{proof}
Note that we only need to take into account concentration phenomena at points $x_j\neq0, \infty$, since concentration at these points can be excluded by similar arguments to those of Lemma~\ref{lemcritic}. Moreover, without loss of generality, we can assume that the index $j\in\mathfrak{J}\cap\mathfrak{K}$ since, otherwise, it can be proved as before that
\begin{equation*}
\lim\limits_{\varepsilon\to0}\limsup\limits_{n\to+\infty}\int_{\mathbb{R}^N}h(x)|u_n|^\alpha |v_n|^\beta\varphi_{j,\varepsilon} \, dx=0,
\end{equation*}
where $\varphi_{j,\varepsilon}(x)$ is a cut-off function centered at $x_j\in\mathbb{R}^N$ defined as in \eqref{cutoff}. Hence, no concentration can ocurr for $x_j\in\mathbb{R}^N$ with $j\in\mathfrak{J}$ and $j\notin\mathfrak{K}$ or $x_k\in\mathbb{R}^N$ with $k\notin\mathfrak{J}$ and $k\in\mathfrak{K}$.\newline
Then, assuming $j\in\mathfrak{J}\cap\mathfrak{K}$ and testing $\mathcal{J}_{\nu}'(u_n,v_n)$ with $(u_n\varphi_{j,\varepsilon},0)$ we get
\begin{equation}\label{limitu}
\begin{split}
0&=\lim\limits_{n\to+\infty}\left\langle \mathcal{J}_{\nu}'(u_n,v_n){\big|}(u_n\varphi_{j,\varepsilon},0)\right\rangle\\
&=\lim\limits_{n\to+\infty}\left(\int_{\mathbb{R}^N}|\nabla u_n|^2\varphi_{j,\varepsilon}dx+\int_{\mathbb{R}^N}u_n\nabla u_n\nabla\varphi_{j,\varepsilon}\,dx-\lambda_1\int_{\mathbb{R}^N}\frac{u_n^2}{|x|^2}\varphi_{j,\varepsilon}dx\right.\\
&\mkern+80mu-\left.\int_{\mathbb{R}^N}|u_n|^{2^*}\varphi_{j,\varepsilon}dx-\alpha\nu\int_{\mathbb{R}^N}h(x)|u_n|^\alpha |v_n|^\beta\varphi_{j,\varepsilon}\, dx\right),
\end{split}
\end{equation}
and testing $\mathcal{J}_{\nu}'(u_n,v_n)$ with $(0,v_n\varphi_{j,\varepsilon})$ we get
\begin{equation}\label{limitv}
\begin{split}
0&=\lim\limits_{n\to+\infty}\left\langle \mathcal{J}_{\nu}'(u_n,v_n)\big|(0,v_n\varphi_{j,\varepsilon})\right\rangle\\
&=\lim\limits_{n\to+\infty}\left(\int_{\mathbb{R}^N}|\nabla v_n|^2\varphi_{j,\varepsilon}dx+\int_{\mathbb{R}^N}v_n\nabla v_n\nabla\varphi_{j,\varepsilon}dx-\lambda_2\int_{\mathbb{R}^N}\frac{v_n^2}{|x|^2}\varphi_{j,\varepsilon}dx\right.\\
&\mkern+80mu-\left.\int_{\mathbb{R}^N}|v_n|^{2^*}\varphi_{j,\varepsilon}dx-\nu\beta\int_{\mathbb{R}^N}h(x)|u_n|^\alpha |v_n|^\beta\varphi_{j,\varepsilon}\, dx\right).
\end{split}
\end{equation}
Then, since $h\in L^{\infty}(\mathbb{R}^N)$, using \eqref{ineq:holder} we find, for some constant $\tilde{C}>0$,
\begin{equation}\label{conuv}
\lim\limits_{\varepsilon\to0}\limsup\limits_{n\to+\infty}\int_{\mathbb{R}^N}h(x)|u_n|^\alpha |v_n|^\beta\varphi_{j,\varepsilon}\, dx\leq \tilde{C}\rho_j^{\frac{\alpha}{2^*}}\overline{\rho}_j^{\frac{\beta}{2^*}}.
\end{equation}
Hence, letting $\varepsilon\to0$, from \eqref{limitu}, \eqref{limitv} and \eqref{conuv} we get
\begin{equation*}
\begin{split}
\mu_j-\rho_j-\nu\alpha\tilde{C}\rho_j^{\frac{\alpha}{2^*}}\overline{\rho}_j^{\frac{\beta}{2^*}}\leq0,\\
\overline{\mu}_j-\overline{\rho}_j-\nu\beta\tilde{C}\rho_j^{\frac{\alpha}{2^*}}\overline{\rho}_j^{\frac{\beta}{2^*}}\leq0.
\end{split}
\end{equation*}
Then, by \eqref{ineq:sobcon}, we find
\begin{equation*}
S\left(\rho_j^{\frac{\alpha}{2^*}}+\overline{\rho}_j^{\frac{\beta}{2^*}}\right)\leq\rho_j+\overline{\rho}_j+2^*\nu \tilde{C}\rho_j^{\frac{\alpha}{2^*}}\overline{\rho}_j^{\frac{\beta}{2^*}}.
\end{equation*}
Therefore,
\begin{equation*}
S\left(\rho_j+\overline{\rho}_j\right)^{\frac{2}{2^*}}\leq(\rho_j+\overline{\rho}_j)(1+2^*\nu \tilde{C}).
\end{equation*}
As a consequence, we have two options either $\rho_j+\overline{\rho}_j=0$ or $\displaystyle\rho_j+\overline{\rho}_j\ge\left(\frac{S}{1+2^*\nu \tilde{C}}\right)^{\frac{N}{2}}$. In case of having concentration, arguing as in Lemma~\ref{lemmaPS2}, we have
\begin{equation*}
\begin{split}
 c&\ge\left(\frac{1}{2}-\frac{1}{\alpha+\beta}\right)(\mu_j+\overline{\mu}_j) + \left(\frac{1}{\alpha+\beta}-\frac{1}{2^*}   \right)  (\rho_j+\overline{\rho}_j)\\
&\ge \mathcal{S}\left(\frac{1}{2}-\frac{1}{\alpha+\beta}\right) (\rho_j+\overline{\rho}_j)^{\frac{2}{2^*}} + \left(\frac{1}{\alpha+\beta}-\frac{1}{2^*} \right) (\rho_j+\overline{\rho}_j)   \\
& \ge\frac{1}{N}\left(\frac{\mathcal{S}}{1+2^*\nu \tilde{C}}\right)^{\frac{N}{2}} .
\end{split}
\end{equation*}
Thus, for $\nu>0$ small enough, we conclude
\begin{equation*}
c\ge\frac{1}{N}\left(\frac{\mathcal{S}}{1+2^*\nu \tilde{C}}\right)^{\frac{N}{2}}\ge\frac{1}{N}\left(\min\{\mathcal{S}(\lambda_1),\mathcal{S}(\lambda_2)\}\right)^{\frac{N}{2}},
\end{equation*}
and we reach a contradiction with the hypothesis on the energy level $c$.
\end{proof}

\

\section{Main Results}\label{section:main}
This section is devoted to prove the main theorems of the work concerning existence of bound and ground states to system \eqref{system:alphabeta}.
In the following, we will use the next hypotheses.
\begin{equation}\tag{C}\label{alternative}
 \mbox{ Either } 2< \alpha+\beta < 2^* \qquad  \mbox{ or } \qquad \alpha+\beta=2^*  \mbox{ and } h  \mbox{ is radial and satisfies \eqref{hypH} }
\end{equation}

\

Our first result dealing with ground states is for $\nu$ sufficiently large, for which system \eqref{system:alphabeta} admits a positive ground state solution.
\begin{theorem}\label{thm:nugrande}
Assume \eqref{alternative}. Then there exists $\overline{\nu}>0$,  then system \eqref{system:alphabeta} admits a positive ground state solution $(\tilde{u},\tilde{v}) \in \mathbb{D}$ for $\nu>\overline{\nu}$.
\end{theorem}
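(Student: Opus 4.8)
The plan is to realize the ground state as a minimizer of $\mathcal{J}_\nu$ on the Nehari manifold and to exploit that, for large $\nu$, this minimal level drops below the compactness thresholds of Section~\ref{section:PS}. Set
\[
c_\nu=\inf_{\mathcal{N}_\nu}\mathcal{J}_\nu,
\]
which is strictly positive for each fixed $\nu$ by \eqref{criticalpoint2} and the lower bound recorded after \eqref{Nnueq}. The heart of the argument is the energy estimate $c_\nu\to0$ as $\nu\to+\infty$, which places $c_\nu$ below the threshold \eqref{hyplemmaPS2} and, simultaneously, below both semi-trivial energies \eqref{Jzeta}.

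To obtain this estimate, fix a pair $(\phi,\psi)$ of positive functions (radial in the critical case of \eqref{alternative}) with $\int_{\mathbb{R}^N}h\,\phi^\alpha\psi^\beta\,dx>0$, and let $t_\nu>0$ be the unique Nehari projection given by \eqref{normH}, so that $(t_\nu\phi,t_\nu\psi)\in\mathcal{N}_\nu$. Since $\alpha+\beta>2$, reading \eqref{normH} as $\nu\to+\infty$ forces $t_\nu\to0$: the coupling term must balance the fixed quantity $\|(\phi,\psi)\|_\mathbb{D}^2$, either because it is of lowest order in $t$ (when $\alpha+\beta<2^*$) or because it carries the factor $\nu$ (when $\alpha+\beta=2^*$). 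Substituting into \eqref{Nnueq2} and using \eqref{normH} to rewrite $\nu\,t_\nu^{\alpha+\beta-2}\int_{\mathbb{R}^N}h\,\phi^\alpha\psi^\beta\,dx$ in terms of $\|(\phi,\psi)\|_\mathbb{D}^2$, one checks that both surviving contributions are $O(t_\nu^{2})$, whence $\mathcal{J}_\nu(t_\nu\phi,t_\nu\psi)\to0$ and therefore $c_\nu\to0$.

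Consequently there is $\overline\nu>0$ such that for $\nu>\overline\nu$ one has
\[
0<c_\nu<\tfrac1N\min\{\mathcal{S}(\lambda_1),\mathcal{S}(\lambda_2)\}^{\frac N2}
=\tfrac1N\min\{\mathcal{S}^{\frac N2}(\lambda_1),\mathcal{S}^{\frac N2}(\lambda_2)\}.
\]
Applying Ekeland's variational principle to $\mathcal{J}_\nu|_{\mathcal{N}_\nu}$ produces a PS sequence at level $c_\nu$, which by Lemma~\ref{lemma:PSNehari} is a PS sequence for $\mathcal{J}_\nu$ on the whole space. Under \eqref{alternative} the appropriate compactness lemma applies at this level, namely Lemma~\ref{lemmaPS2} when $2<\alpha+\beta<2^*$, or Lemma~\ref{lemcritic}\,(i) when $\alpha+\beta=2^*$ and $h$ is radial, working in $\mathbb{D}_r$ and invoking the principle of symmetric criticality. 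This yields a strongly convergent subsequence and hence a minimizer $(\tilde u,\tilde v)\in\mathcal{N}_\nu$ with $\mathcal{J}_\nu(\tilde u,\tilde v)=c_\nu$; being a minimizer on $\mathcal{N}_\nu$, it is a genuine critical point of $\mathcal{J}_\nu$.

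Finally, replacing $(\tilde u,\tilde v)$ by $(|\tilde u|,|\tilde v|)$ leaves both the value $c_\nu$ and the Nehari constraint unchanged, so we may assume $\tilde u,\tilde v\ge0$, and $(\tilde u,\tilde v)$ solves \eqref{system:alphabeta} with nonnegative components. Neither component can vanish: if, say, $\tilde v\equiv0$, then $(\tilde u,0)\in\mathcal{N}_\nu$ forces $\tilde u\in\mathcal{N}_1$ and $\mathcal{J}_\nu(\tilde u,0)=\mathcal{J}_1(\tilde u)\ge\tfrac1N\mathcal{S}^{\frac N2}(\lambda_1)>c_\nu$, contradicting minimality. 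Hence $\tilde u,\tilde v\gneq0$, and the strong maximum principle for the operator $-\Delta-\lambda_j|x|^{-2}$ with $\lambda_j\in(0,\Lambda_N)$ gives $\tilde u,\tilde v>0$ in $\mathbb{R}^N\setminus\{0\}$. Since every non-trivial critical point lies on $\mathcal{N}_\nu$ and thus has energy at least $c_\nu$, the couple $(\tilde u,\tilde v)$ is a positive ground state. I expect the delicate point to be the estimate $c_\nu\to0$, where the subcritical and critical regimes of \eqref{alternative} must be distinguished through a careful reading of the scaling identity \eqref{normH}; the exclusion of semi-trivial minimizers and the passage to positivity are then comparatively routine.
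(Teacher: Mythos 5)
Your proposal is correct and follows essentially the same route as the paper: project a fixed pair onto $\mathcal{N}_\nu$ via \eqref{normH}, show the minimal Nehari level falls below $\tfrac1N\min\{\mathcal{S}(\lambda_1),\mathcal{S}(\lambda_2)\}^{N/2}$ as $\nu\to+\infty$ because the projection parameter $t_\nu\to0$, apply Lemma~\ref{lemmaPS2} (subcritical case) or Lemma~\ref{lemcritic} (critical radial case) to recover compactness, exclude semi-trivial minimizers, and conclude positivity by the maximum principle in $\mathbb{R}^N\setminus\{0\}$. The only cosmetic differences are that you invoke Ekeland's principle and symmetric criticality explicitly, and you rule out a vanishing component via $\inf_{\mathcal{N}_1}\mathcal{J}_1=\tfrac1N\mathcal{S}^{\frac N2}(\lambda_1)$ rather than via Terracini's classification of solutions of \eqref{vtildeq}, as the paper does.
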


\begin{proof}
Set $(u,v)\in\mathbb{D}\setminus (0,0)$, then there exists a constant $t=t_{(u,v)}$ such that $(tu,tv) \in \mathcal{N}_\nu$. More precisely, $t_{(u,v)}$ can be defined as the unique solution to the algebraic equation \eqref{normH}.

Due to the fact $\alpha+\beta>2$, then $t=t_\nu\to 0$ as $\nu\to+\infty$. Since $(t_\nu u,t_\nu v) \in \mathcal{N}_\nu$ and $\alpha+\beta\leq 2^*$, by using \eqref{normH}, one has
$$
\lim_{\nu \to+\infty} t_\nu \nu = \dfrac{\|(u,v)\|_\mathbb{D}^2}{\int_{\mathbb{R}^N} h(x) |u|^{\alpha} |v|^{\beta}  \, dx},
$$
for some $(u,v)\in \mathbb{D}$. On the other hand,
$$
\mathcal{J}_\nu(t_\nu u ,t_\nu v)=\left(\frac{1}{2}-\frac{1}{\alpha+\beta} +o(1)  \right) t_\nu^2 \|(u,v)\|^2_\mathbb{D}.
$$
Therefore, we conclude that
\beq\label{minimumlevel}
\tilde{c}_\nu=\inf_{u,v \in \mathcal{N}_\nu} \mathcal{J}_\nu (u,v)< \min \{\mathcal{J}_\nu(z_\mu^{\lambda_1},0),\mathcal{J}_\nu(0,z_\mu^{\lambda_2}) \}=\frac{1}{N}\min \{ \mathcal{S}(\lambda_1),\mathcal{S}(\lambda_2)\}^{\frac{N}{2}},
\eeq
for some $\nu>\overline{\nu}$ where $\overline{\nu}$ is sufficiently large.

For the subcritical regime $\alpha+\beta < 2^*$, by Lemma~\ref{lemmaPS2} there exists $(\tilde{u},\tilde{v}) \in \mathbb{D}$ such that $\mathcal{J}_\nu(\tilde{u},\tilde{v})=\tilde{c}_\nu$.\newline
Moreover, observe that
$$
\mathcal{J}_\nu(|\tilde{u}|,|\tilde{v}|)= \mathcal{J}_\nu(\tilde{u},\tilde{v}),
$$
so we can assume that $\tilde{u}\ge 0$ and $\tilde{v}\ge 0$ in $\mathbb{R}^N$. Applying the classical regularity results, $\tilde{u}$ and $\tilde{v}$ are smooth in $\R^N\setminus\{0\}$. In particular, $\tilde{u}\not \equiv 0$ and $\tilde{v}\not \equiv 0$. Otherwise, if $\tilde{u}\equiv 0$, then $\tilde{v}\ge 0 $ and $\tilde{v}$ verifies \eqref{vtildeq}, which implies that $\tilde{v}=z_\mu^{\lambda_2}$, a contradiction with \eqref{minimumlevel}. Using a similar argument, if $\tilde{v}\equiv 0$, one obtains a contradiction with \eqref{minimumlevel}. Consequently, by the maximum principle in $\R^N\setminus\{0\}$, we derive the existence of a ground state $(\tilde{u},\tilde{v}) \in \mathcal{N}_\nu$ such that $\tilde{u}> 0$ and $\tilde{v}> 0$ in $\mathbb{R}^N\setminus\{0\}$.

\
Using Lemma~\ref{lemcritic} and arguing as above, the same conclusion holds for the critical regime $\alpha+\beta=2^*$, namely, we conclude the existence of a positive ground state $(\tilde{u},\tilde{v})$.
\end{proof}

\

As the expressions \eqref{Jzeta} and \eqref{Slambda} show, the order between the energy levels of the \textit{semi-trivial} solutions is determined by the relation of the parameters $\lambda_1$ and $\lambda_2$. Actually, if $\lambda_1\ge \lambda_2$, the minimum level between both solutions corresponds to the couple $(z_\mu^{\lambda_1},0)$ which is a saddle point under certain assumptions on $\beta$ and $\nu$. Alternatively, if $\lambda_2\ge \lambda_1$, the minimum energy level corresponds to the couple $(0,z_\mu^{\lambda_2})$ which may be a saddle point under certain hypotheses on $\alpha$ and $\nu$. Therefore, for both situations there exists a positive ground state for \eqref{system:alphabeta}.

\begin{theorem}\label{thm:lambdaground}
Assume $\alpha+\beta=2^*$ and $\nu$ small or \eqref{alternative}. If one of the following alternatives holds:
\begin{itemize}
\item[i)] $\lambda_1\ge \lambda_2$ and either $\beta=2$ and $\nu$ large enough or $\beta<2$,
\item[ii)] $\lambda_2\ge \lambda_1$ and either $\alpha=2$ and $\nu$ large enough or $\alpha<2$,
\end{itemize}
then system \eqref{system:alphabeta} admits a positive ground state $(\tilde{u},\tilde{v})\in\mathbb{D}$.

In particular, if $\max\{\alpha,\beta\}<2$ or $\max\{\alpha,\beta\}\le2$ with $\nu$ sufficiently large, then system \eqref{system:alphabeta} admits a positive ground state $(\tilde{u},\tilde{v})\in\mathbb{D}$.
\end{theorem}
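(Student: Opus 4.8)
The plan is to realize the ground state as a minimizer of $\mathcal{J}_\nu$ on the Nehari manifold $\mathcal{N}_\nu$ (working inside the radial class $\mathbb{D}_r$ in the critical regime), and the whole argument hinges on locating the minimal level
$$\tilde c_\nu=\inf_{\mathcal{N}_\nu}\mathcal{J}_\nu$$
strictly below the smaller of the two semi-trivial energies. Since $\mathcal{S}(\lambda)$ is decreasing in $\lambda$ by \eqref{Slambda}, in case ii) ($\lambda_2\ge\lambda_1$) one has $\mathcal{S}^{\frac{N}{2}}(\lambda_2)\le \mathcal{S}^{\frac{N}{2}}(\lambda_1)$, so by \eqref{Jzeta} the lower semi-trivial level is $\frac1N\mathcal{S}^{\frac{N}{2}}(\lambda_2)=\frac1N\min\{\mathcal{S}(\lambda_1),\mathcal{S}(\lambda_2)\}^{\frac{N}{2}}$, attained by $(0,z_\mu^{\lambda_2})$; symmetrically in case i) the relevant level is $\frac1N\mathcal{S}^{\frac{N}{2}}(\lambda_1)$, attained by $(z_\mu^{\lambda_1},0)$. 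I will treat case ii) in detail, case i) being entirely analogous after exchanging the roles of the two components.

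The decisive step is the strict inequality $\tilde c_\nu<\frac1N\mathcal{S}^{\frac{N}{2}}(\lambda_2)$. Under the hypotheses of case ii) — namely $\alpha<2$, or $\alpha=2$ with $\nu$ large — Proposition~\ref{thmsemitrivialalphabeta}~iii) asserts that $(0,z_\mu^{\lambda_2})$ is a saddle point of $\mathcal{J}_\nu$ on $\mathcal{N}_\nu$, hence not a local minimum; consequently every neighbourhood of it on $\mathcal{N}_\nu$ contains points of strictly lower energy, which already yields $\tilde c_\nu<\mathcal{J}_\nu(0,z_\mu^{\lambda_2})$. Concretely, I would perturb the vanishing component by a small radial bump, say test $(t\varphi,z_\mu^{\lambda_2})$ with $\varphi\in\mathbb{D}_r$ and project onto $\mathcal{N}_\nu$: because the coupling term $-\nu\int_{\R^N}h\,|u|^\alpha|v|^\beta$ scales like $-t^\alpha$ with $\alpha<2$, it dominates the $O(t^2)$ contribution of the quadratic and critical terms for small $t$, so the energy decreases below the semi-trivial level. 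The borderline $\alpha=2$ is recovered for $\nu$ large by the same expansion. Crucially the bump $\varphi$ can be chosen radial, so the inequality persists for the restricted infimum over $\mathcal{N}_\nu\cap\mathbb{D}_r$, which is what the critical case requires.

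Once $\tilde c_\nu<\frac1N\min\{\mathcal{S}(\lambda_1),\mathcal{S}(\lambda_2)\}^{\frac{N}{2}}$ is established, compactness is supplied by the Palais--Smale theory already developed. A minimizing sequence for $\mathcal{J}_\nu$ on $\mathcal{N}_\nu$ can, via Ekeland's variational principle, be taken to be a PS sequence for $\mathcal{J}_\nu|_{\mathcal{N}_\nu}$ at level $\tilde c_\nu$; by Lemma~\ref{lemma:PSNehari} it is then a PS sequence for $\mathcal{J}_\nu$ on $\mathbb{D}$. Since $\tilde c_\nu$ satisfies \eqref{hyplemmaPS2}, Lemma~\ref{lemmaPS2} (subcritical case $2<\alpha+\beta<2^*$) or Lemma~\ref{lemcritic}~i) (critical radial case) provides a strongly convergent subsequence, and the bound \eqref{criticalpoint2} guarantees that its limit $(\tilde u,\tilde v)$ lies in $\mathcal{N}_\nu$ and realizes $\tilde c_\nu$. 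As $\mathcal{N}_\nu$-minimizers are free critical points of $\mathcal{J}_\nu$, $(\tilde u,\tilde v)$ solves \eqref{system:alphabeta}.

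It remains to show the solution is genuinely non--semi-trivial and positive, and to deduce the final statement. Replacing $(\tilde u,\tilde v)$ by $(|\tilde u|,|\tilde v|)$ leaves both the energy and the Nehari constraint unchanged, so we may assume $\tilde u,\tilde v\ge0$. Were $\tilde u\equiv0$, then $\tilde v$ would solve \eqref{vtildeq} and hence equal some $z_\mu^{\lambda_2}$ with energy $\frac1N\mathcal{S}^{\frac{N}{2}}(\lambda_2)$, contradicting $\tilde c_\nu<\frac1N\mathcal{S}^{\frac{N}{2}}(\lambda_2)$; a vanishing $\tilde v$ is excluded in the same way using \eqref{utildeq} and the monotonicity of $\mathcal{S}(\lambda)$. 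Thus both components are nontrivial, and elliptic regularity together with the strong maximum principle applied to each equation on $\R^N\setminus\{0\}$ gives $\tilde u,\tilde v>0$ there. Finally, the \emph{In particular} clause follows by selecting the applicable alternative according to the sign of $\lambda_1-\lambda_2$: if $\lambda_2\ge\lambda_1$ one invokes ii) with $\alpha\le2$, and if $\lambda_1\ge\lambda_2$ one invokes i) with $\beta\le2$, the endpoint $\max\{\alpha,\beta\}=2$ requiring $\nu$ large as above. The main difficulty I anticipate is precisely the strict energy gap of the second paragraph — in particular verifying that a radial descent direction exists and controlling the borderline exponent $2$ through a sufficiently sharp expansion of $\mathcal{J}_\nu$ near the semi-trivial solution.
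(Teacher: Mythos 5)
Your proposal follows essentially the same route as the paper: the saddle-point classification of Proposition~\ref{thmsemitrivialalphabeta}, applied to the semi-trivial solution with the \emph{lower} energy (which, by the monotonicity of $\mathcal{S}(\lambda)$ in \eqref{Slambda}, is precisely the one that the ordering of $\lambda_1,\lambda_2$ together with the hypothesis on $\alpha$ or $\beta$ turns into a saddle), yields the strict gap $\tilde c_\nu<\frac{1}{N}\min\{\mathcal{S}(\lambda_1),\mathcal{S}(\lambda_2)\}^{\frac{N}{2}}$, after which the Palais--Smale lemmas produce a minimizer and the absolute-value/maximum-principle argument gives a positive, genuinely non-semi-trivial ground state; your added details (the $t^\alpha$-versus-$t^2$ descent expansion and Ekeland's principle) are consistent with, and slightly more explicit than, the paper's treatment. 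The one branch you leave uncovered is ``$\alpha+\beta=2^*$ and $\nu$ small'' with \emph{non-radial} $h$: your compactness step invokes only Lemma~\ref{lemmaPS2} and the radial Lemma~\ref{lemcritic}, whereas that case requires Lemma~\ref{lemcritic2} (exactly what the paper uses there); with that substitution your argument is complete.
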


\begin{remark}
In the previous result, notice that if $\alpha+\beta=2^*$ and $\nu$ is small, the case $\beta=2$ and $\lambda_1\ge \lambda_2$  or $\alpha=2$ and $\lambda_2\ge \lambda_1$  do not hold by the previous assumptions since they require $\nu$ large.
\end{remark}

\begin{proof}
Let us start by proving the thesis assuming alternative i). Under one of the hypotheses, Proposition~\ref{thmsemitrivialalphabeta}, states that $(z_\mu^{\lambda_1},0)$ is a saddle point of $\mathcal{J}_\nu$ on $\mathcal{N}_\nu$. Moreover, due to  $\lambda_1\ge \lambda_2$
$$
\tilde{c}_\nu<\mathcal{J}_\nu(z_\mu^{\lambda_1},0)=\frac{1}{N}\mathcal{S}^{\frac{N}{2}}(\lambda_1)=\frac{1}{N}\min \{\mathcal{S}(\lambda_1),\mathcal{S}(\lambda_2)\}^{\frac{N}{2}},
$$
 where $\tilde{c}_\nu $ is defined by \eqref{ctilde}. Therefore, for subcritical case $\alpha+\beta<2^*$, by Lemma~\ref{lemmaPS2} there exists $(\tilde{u},\tilde{v})\in\mathcal{N}_\nu$  such that $\tilde{c}_\nu=\mathcal{J}_\nu(\tilde{u},\tilde{v})$. Arguing by contradiction as in Theorem~\ref{thm:nugrande}, one gets that $\tilde{u},\tilde{v} \ge 0$ and $(\tilde{u},\tilde{v}) \neq  (0,0)$. Finally, the maximum principle in $\R^N\setminus\{0\}$ allows us to conclude that $(\tilde{u},\tilde{v})$ is a positive ground state of \eqref{system:alphabeta}.

\

For the critical case $\alpha+\beta=2^*$, we obtain the existence of a positive ground state $(\tilde{u},\tilde{v})$ of  \eqref{system:alphabeta}, by using Lemma~\ref{lemcritic} in case that $h$ is radial and Lemma~\ref{lemcritic2} for $\nu$ small.

Repeating an analogous argument, we can deduce the same conclusion for alternative ii). If we suppose iii), it is immediate that
$$
\tilde{c}_\nu<\frac{1}{N}\min \{\mathcal{S}(\lambda_1),\mathcal{S}(\lambda_2)\}^{\frac{N}{2}},
$$
independently of the order between $\lambda_1$ and $\lambda_2$. Therefore, reasoning as before we infer the existence of a positive ground state.
\end{proof}

\
In case that $\lambda_2>\lambda_1$, the minimum energy provided by the {\it semi-trivial} solutions corresponds to the energy of $(0,z_\mu^{\lambda_2}) $. Moreover if either $\alpha>2$ or $\alpha=2$ with $\nu$ sufficiently small, the \textit{semi-trivial} solution is indeed a local minimum. In this context, the following result shows that for $\nu>0$ sufficiently small this couple realizes as a  ground state solution for \eqref{system:alphabeta}. Under analogous hypotheses, we can establish the same conclusion for $(z_\mu^{\lambda_1},0)$ in case that $\lambda_1\ge \lambda_2$.

The following result is an adaptation of \cite[Theorem~3.4]{AbFePe}. However, that result requires that $\alpha\ge 2$ and $\beta\ge 2$. In the subsequent, we prove that it is enough that one of the exponents will be bigger than $2$, namely $\max\{\alpha,\beta\}\ge 2$.

\begin{theorem}\label{thm:groundstatesalphabeta}
Assume $\alpha+\beta=2^*$ and $\nu$ small or \eqref{alternative}. Then the following holds:

\begin{itemize}
\item[i)] If $\alpha\ge 2$ and $\lambda_2>\lambda_1$, then there exists $\tilde{\nu}>0$ such that for any $0<\nu<\tilde{\nu}$  the couple $(0,z_\mu^{\lambda_2})$ is the ground state of \eqref{system:alphabeta}.

\item[ii)]  If $\beta\ge 2$ and $\lambda_1>\lambda_2$, then there exists $\tilde{\nu}>0$ such that for any $0<\nu<\tilde{\nu}$  the couple $(z_\mu^{\lambda_1},0)$ is the ground state of \eqref{system:alphabeta}.

\item[iii)] In particular, if $\alpha,\beta \ge 2$, then there exists $\tilde{\nu}>0$ such that for any $0<\nu<\tilde{\nu}$, the couple $(0, z_\mu^{\lambda_2})$ is a ground state of \eqref{system:alphabeta} if $\lambda_2\ge \lambda_1$ and $(z_\mu^{\lambda_1},0)$ is a ground state otherwise.
\end{itemize}
\end{theorem}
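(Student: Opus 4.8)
The plan is to prove part i) (part ii) being entirely symmetric and part iii) following by combining the two), and to reduce everything to the identity
\[
\tilde c_\nu=\inf_{\mathcal N_\nu}\mathcal J_\nu=\frac1N\mathcal S^{\frac N2}(\lambda_2)\qquad\text{for $\nu$ small},
\]
with this level attained at $(0,z_\mu^{\lambda_2})$. Since $\lambda_2>\lambda_1$ and $\mathcal S(\cdot)$ is \emph{strictly} decreasing by \eqref{Slambda}, one has $\tfrac1N\mathcal S^{\frac N2}(\lambda_2)=\tfrac1N\min\{\mathcal S(\lambda_1),\mathcal S(\lambda_2)\}^{\frac N2}$, so the upper bound $\tilde c_\nu\le\tfrac1N\mathcal S^{\frac N2}(\lambda_2)$ is immediate from $(0,z_\mu^{\lambda_2})\in\mathcal N_\nu$ and \eqref{Jzeta}. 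All the content is the reverse inequality, which I would establish by contradiction, assuming $\tilde c_\nu<\tfrac1N\mathcal S^{\frac N2}(\lambda_2)$.

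Under this assumption the level $\tilde c_\nu$ satisfies exactly the threshold \eqref{hyplemmaPS2}. I would take a minimizing sequence in $\mathcal N_\nu$, which (replacing $(u,v)$ by $(|u|,|v|)$, an operation that changes neither $\mathcal J_\nu$ nor membership in $\mathcal N_\nu$) may be assumed non-negative, and by Ekeland's principle together with Lemma~\ref{lemma:PSNehari} view it as a PS sequence for $\mathcal J_\nu$ on $\mathbb D$ at level $\tilde c_\nu$. The appropriate compactness result then yields a convergent subsequence and a non-negative minimizer $(\tilde u,\tilde v)\in\mathcal N_\nu$: Lemma~\ref{lemmaPS2} when $\alpha+\beta<2^*$, Lemma~\ref{lemcritic} when $\alpha+\beta=2^*$ and $h$ is radial, and Lemma~\ref{lemcritic2} when $\alpha+\beta=2^*$ with $\nu$ small. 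Being a constrained minimizer, $(\tilde u,\tilde v)$ is a critical point, hence a solution of \eqref{system:alphabeta}, and both components must be nontrivial: if one vanished, the surviving one would solve \eqref{entire} and, by \eqref{normcrit}, force $\tilde c_\nu\in\{\tfrac1N\mathcal S^{\frac N2}(\lambda_1),\tfrac1N\mathcal S^{\frac N2}(\lambda_2)\}$, contradicting $\tilde c_\nu<\tfrac1N\mathcal S^{\frac N2}(\lambda_2)$.

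The core is then a lower bound on the critical mass $\sigma_1=\|\tilde u\|_{L^{2^*}}^{2^*}$ via the algebraic Lemma~\ref{algelemma}, which is where the hypothesis $\alpha\ge2$ is used. Testing the first equation of \eqref{system:alphabeta} with $\tilde u$ and using \eqref{Slambda} gives $\mathcal S(\lambda_1)\sigma_1^{\frac{N-2}N}\le\|\tilde u\|_{\lambda_1}^2=\sigma_1+\nu\alpha\int_{\mathbb R^N}h\,\tilde u^\alpha\tilde v^\beta\,dx$. The mass $\|\tilde v\|_{L^{2^*}}^{2^*}$ produced by Hölder's inequality \eqref{Holder} is controlled by the energy identity \eqref{Nnueq2}, which bounds it by $N\tilde c_\nu\le\mathcal S^{\frac N2}(\lambda_2)$ uniformly in $\nu$; absorbing it into a constant $C$ reduces the estimate to the single-variable inequality \eqref{eqlemmaPS14}, namely $\mathcal S(\lambda_1)\sigma_1^{\frac{N-2}N}\le\sigma_1+C\nu\,\sigma_1^{\frac\alpha2\frac{N-2}N}$ with $\gamma=\alpha\ge2$. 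Lemma~\ref{algelemma} then yields, for any prescribed $\varepsilon>0$ and all $0<\nu<\tilde\nu(\varepsilon)$, the bound $\sigma_1\ge(1-\varepsilon)\mathcal S^{\frac N2}(\lambda_1)$, whence by \eqref{Nnueq2}
\[
\tilde c_\nu=\mathcal J_\nu(\tilde u,\tilde v)\ge\frac1N(\sigma_1+\sigma_2)\ge\frac{1-\varepsilon}N\,\mathcal S^{\frac N2}(\lambda_1).
\]
Since $\mathcal S(\lambda_1)>\mathcal S(\lambda_2)$ strictly, I would fix $\varepsilon$ so small that $(1-\varepsilon)\mathcal S^{\frac N2}(\lambda_1)>\mathcal S^{\frac N2}(\lambda_2)$; this gives $\tilde c_\nu>\tfrac1N\mathcal S^{\frac N2}(\lambda_2)$, the desired contradiction, proving $\tilde c_\nu=\tfrac1N\mathcal S^{\frac N2}(\lambda_2)$, realised by $(0,z_\mu^{\lambda_2})$, which by Proposition~\ref{thmsemitrivialalphabeta} is indeed an admissible (locally minimizing) critical point.

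Part ii) follows by exchanging the two components, applying Lemma~\ref{algelemma} to the second one (where $\beta\ge2$) and using $\lambda_1>\lambda_2$; part iii) is obtained by selecting i) or ii) according to whether $\lambda_2\ge\lambda_1$ or $\lambda_1>\lambda_2$, the degenerate case $\lambda_1=\lambda_2$ being handled by applying the mass bound to \emph{both} components at once (legitimate since then $\alpha,\beta\ge2$), giving $\tilde c_\nu\ge\tfrac{1-\varepsilon}N(\mathcal S^{\frac N2}(\lambda_1)+\mathcal S^{\frac N2}(\lambda_2))>\tfrac1N\mathcal S^{\frac N2}(\lambda_1)$ for $\varepsilon<\tfrac12$. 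The main obstacle is the third step: converting the coupled, two-mass equation/Nehari identity into a genuinely one-variable inequality of the exact shape demanded by Lemma~\ref{algelemma}. This hinges on absorbing the opposite component's mass into a constant through the uniform energy bound, and it is precisely here that $\max\{\alpha,\beta\}\ge2$ and the strict ordering of $\lambda_1,\lambda_2$ are indispensable.
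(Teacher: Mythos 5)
Your proposal is correct and follows essentially the same route as the paper's own proof: argue by contradiction on the level $\tilde c_\nu<\tfrac1N\mathcal S^{\frac N2}(\lambda_2)$, obtain a non-negative minimizer via the PS results (Lemma~\ref{lemmaPS2}, Lemma~\ref{lemcritic}, Lemma~\ref{lemcritic2}), show both components are non-trivial, and apply Lemma~\ref{algelemma} to the component with exponent $\alpha\ge2$ after making the H\"older constant uniform in $\nu$ through the bound $\sigma_2<\mathcal S^{\frac N2}(\lambda_2)$, which is exactly the paper's argument. The only differences are organizational: you work with a fixed small $\nu$ instead of a sequence $\nu_n\searrow0$, your treatment of the case $\lambda_1=\lambda_2$ in part iii) (applying the mass bound to both components) is more explicit than the paper's, and you omit the paper's closing step showing that every minimizer is of the form $(0,\pm z_\mu^{\lambda_2})$ --- a refinement not needed to verify that $(0,z_\mu^{\lambda_2})$ attains the minimum in \eqref{ctilde}.
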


\begin{proof}
Let us show i). By Proposition~\ref{thmsemitrivialalphabeta}, $(0,z_\mu^{\lambda_2})$ is a local minimum for $\nu$ small enough. Next, assume, by contradiction, that there exists $\{\nu_n\} \searrow 0$ such that $\tilde{c}_{\nu_n} <  \mathcal{J}_{\nu_n} (0,z_\mu^{\lambda_2})$. Since $\lambda_2>\lambda_1$, then
\beq\label{groundstates1}
\tilde{c}_{\nu_n}< \frac{1}{N} \min \{ \mathcal{S}(\lambda_1), \mathcal{S}(\lambda_2) \}^{\frac{N}{2}}= \frac{1}{N} \mathcal{S}^{\frac{N}{2}}(\lambda_2),
\eeq
where $\tilde{c}_{\nu_n}$ is defined in \eqref{ctilde} with $\nu=\nu_n$. In case of $\alpha+\beta<2^*$, because of Lemma~\ref{lemmaPS2}, the PS condition holds at level $\tilde{c}_{\nu_n}$. If $\alpha+\beta=2^*$, we apply Lemma~\ref{lemcritic} for $h$ radial and Lemma~\ref{lemcritic2} for $\nu$ small, to obtain the same conclusion.

Therefore, there exists $(\tilde{u}_n,\tilde{v}_n) \in \mathbb{D}$ such that $\tilde{c}_{\nu_n}=\mathcal{J}_{\nu_n} (\tilde{u}_n,\tilde{v}_n)$. Since $\mathcal{J}_{\nu_n} (\tilde{u}_n,\tilde{v}_n)=\mathcal{J}_{\nu_n} (|\tilde{u}_n|,|\tilde{v}_n|)$, we can suppose that $\tilde{u}_n \ge 0$ and $\tilde{v}_n \ge 0$.

Moreover, as we proved in previous results, is not difficult to show that $\tilde{u}_n \not \equiv 0$ and $\tilde{v}_n \not \equiv 0$ in $\mathbb{R}^N$. Otherwise, it would contradict \eqref{groundstates1}. Using the maximum principle in $\mathbb{R}^N\setminus \{0\}$, one can conclude that actually $\tilde{u}_n > 0$ and $\tilde{v}_n > 0$ in $\mathbb{R}^N\setminus \{0\}$.\newline
Next, define the following integral quantities
\begin{equation*}
\sigma_{1,n}=\int_{\mathbb{R}^N} \tilde{u}_n^{2^*} \, dx \qquad \mbox{ and } \qquad \sigma_{2,n}=\int_{\mathbb{R}^N} \tilde{v}_n^{2^*} \, dx .
\end{equation*}
Note that, by \eqref{Nnueq2}, we have
\beq\label{groundstates3}
\tilde{c}_{\nu_n} = \mathcal{J}_{\nu_n} (\tilde{u}_n,\tilde{v}_n) = \frac{1}{N} \left(  \sigma_{1,n} + \sigma_{2,n}\right)  + \nu_n \left( \frac{\alpha+\beta-2}{2}  \right)\int_{\mathbb{R}^N} h(x)  \,  \tilde{u}_n^{\alpha} \,  \tilde{v}_n^\beta \, dx  .
\eeq
Combining \eqref{groundstates1} and \eqref{groundstates3}, we deduce that
\beq\label{groundstates4}
\sigma_{1,n}+\sigma_{2,n}<\mathcal{S}^{\frac{N}{2}}(\lambda_2).
\eeq
Now we take advantage from the fact that the pair $(\tilde{u}_n,\tilde{v}_n)$ is a solution to \eqref{system:alphabeta}. Working with the first equation and exploiting the definition of $\mathcal{S}(\lambda)$ given in \eqref{Slambda}, we obtain
\beq\label{groundstates45}
\mathcal{S}(\lambda_1) (\sigma_{1,n})^{\frac{N-2}{N}} \leq \sigma_{1,n} +  \nu_n \alpha  \int_{\mathbb{R}^N} h(x)  \,  \tilde{u}_n^{\alpha} \,  \tilde{v}_n^{\beta} \, dx  .
\eeq
Subsequently, applying H\"older's inequality and \eqref{groundstates4}, we get
$$
\int_{\mathbb{R}^N} h(x)  \,  \tilde{u}_n^{\alpha} \,  \tilde{v}_n^{\beta}  \, dx    \leq \|h\|_{L^{\infty}}  \left( \int_{\mathbb{R}^N} \tilde{u}_n^{2^*}  \, dx   \right)^{\frac{\alpha}{2^*}}\left( \int_{\mathbb{R}^N} \tilde{v}_n^{2^*}  \, dx    \right)^{\frac{\beta}{2^*}} \leq \|h\|_{L^{\infty}}  (\mathcal{S}(\lambda_2))^{\beta \frac{N-2}{4}} (\sigma_{1,n})^{\frac{\alpha}{2}\frac{N-2}{N}}.
$$
and, hence, from \eqref{groundstates45}, it follows that
\begin{equation*}
\mathcal{S}(\lambda_1) (\sigma_{1,n})^{\frac{N-2}{N}} < \sigma_{1,n} + \nu_n\alpha   \|h\|_{L^{\infty}}  (\mathcal{S}(\lambda_2))^{\beta \frac{N-2}{4}} (\sigma_{1,n})^{\frac{\alpha}{2}\frac{N-2}{N}}.
\end{equation*}
Since $\lambda_2> \lambda_1$, then there exists $\varepsilon>0$ such that
\beq\label{groundstates6}
(1-\varepsilon) \mathcal{S}^{\frac{N}{2}}(\lambda_1) \ge \mathcal{S}^{\frac{N}{2}}(\lambda_2).
\eeq
Next, because of Lemma~\ref{algelemma} with $\sigma=\sigma_{1,n}$, we infer the existence of $\tilde{\nu}=\tilde{\nu}(\varepsilon)>0$ such that
\beq\label{groundstates67}
\sigma_{1,n}> (1-\varepsilon) \mathcal{S}^{\frac{N}{2}}(\lambda_1) \qquad \mbox{ for any } 0<\nu_n<\tilde{\nu}.
\eeq
Because of \eqref{groundstates6}, one deduces that $\sigma_{1,n}>\mathcal{S}^{\frac{N}{2}}(\lambda_2)$, which contradicts \eqref{groundstates4}. Thus, we have proved that for $\nu$ sufficiently small
\beq\label{groundstates7}
\tilde{c}_\nu =   \frac{1}{N} \mathcal{S}^{\frac{N}{2}}(\lambda_2).
\eeq
Let $(\tilde{u},\tilde{v})$ be a minimizer of $\mathcal{J}_\nu$. Repeating the above argument, we can ensure that either $\tilde{u}\equiv 0$ or $\tilde{v}\equiv 0$. Obviously, if $v\equiv 0$, condition \eqref{groundstates7} would be violated. So $u\equiv 0$ and $\tilde{v}$ satisfies the equation
$$
-\Delta \tilde{v} - \lambda_2 \frac{\tilde{v}}{|x|^2}=|\tilde{v}|^{2^*-2}\tilde{v} \qquad \mbox{ in } \mathbb{R}^N.
$$
Let us show that the $\tilde{v}$ does not change sign and in fact $\tilde{v}= \pm z_{\mu}^{\lambda_2}$. Assume by contradiction that $\tilde{v}$ changes sign, then $\tilde{v}^{\pm} \not \equiv 0$ in $\mathbb{R}^N$. Since $(0,\tilde{v}) \in \mathcal{N}_\nu$, hence $(0,\tilde{v}^\pm) \in \mathcal{N}_\nu$. Moreover, due to \eqref{groundstates3}
$$
\tilde{c}_{\nu}= \mathcal{J}_\nu (0,\tilde{v}) = \frac{1}{N} \int_{\mathbb{R}^N} |\tilde{v}|^{2^*}  \, dx  = \frac{1}{N} \left( \int_{\mathbb{R}^N} (\tilde{v}^+)^{2^*}  \, dx  + \int_{\mathbb{R}^N}  |\tilde{v}^-|^{2^*}  \, dx  \right) > \mathcal{J}_\nu (0,\tilde{v}^+) \ge  \tilde{c}_{\nu},
$$
which is a contradiction. Hence, $(0,\pm z_{\mu}^{\lambda_2})$ is the minimizer of $\mathcal{J}_\nu$ in $\mathcal{N}_\nu$ if $\lambda_2>\lambda_1$. Even more, $(0,z_{\mu}^{\lambda_2})$ is a ground state to \eqref{system:alphabeta}.

\

In a similar way we can deduce ii). In case that $\alpha,\beta\ge 2$ we apply an analogous argument for the semi-trivial couple whose energy level is the minimum one.

\end{proof}

\

\begin{remark}
The interval for admissible parameters $\nu$ in Theorem~\ref{thm:groundstatesalphabeta} depends on $|\lambda_2-\lambda_1|$. For instance, if $\lambda_2>\lambda_1$, the range of $\varepsilon$ which verifies inequality \eqref{groundstates6} increases as $\lambda_2-\lambda_1$ increases. This implies that \eqref{groundstates67} is satisfied for a bigger range of parameters $\nu$ as a consequence of Lemma~\ref{algelemma}.
\end{remark}

\

In the following result we find bound states by a min-max procedure. More precisely, we shall show that the energy functional $\mathcal{J}^+_\nu$, introduced in \eqref{funct:SKdVp}, admits the \textit{Mountain--Pass--geometry} for parameters $\lambda_1,\lambda_2$ which verifies a separability condition, that allows us to separate the \textit{semi-trivial} energy levels in a suitable way.

\

\

\begin{theorem}\label{MPgeom}
Assume \eqref{alternative}. If
\begin{itemize}
\item[i)] Either
\beq\label{lamdasalphabeta}
\mbox{ $\alpha \ge 2 \,  $,  \quad  $ \lambda_2> \lambda_1$ \quad  and } \qquad  \frac{\Lambda_N-\lambda_2}{\Lambda_N-\lambda_1}>2^{-\frac{2}{N-1}},
\eeq
\item[ii)] or
\begin{equation*}
\mbox{  $\beta \ge 2 $,  \quad  $\lambda_1> \lambda_2$ \qquad and }\qquad  \frac{\Lambda_N-\lambda_1}{\Lambda_N-\lambda_2}>2^{-\frac{2}{N-1}},
\end{equation*}
\end{itemize}
then there exists $\tilde{\nu}>0$ such that for $0<\nu\le \tilde{\nu}$, the problem \eqref{system:alphabeta} admits a bound state of Mountain--Pass--type critical point.
\end{theorem}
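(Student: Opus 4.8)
The plan is to produce the bound state as a Mountain--Pass critical point of the truncated functional $\mathcal{J}^+_\nu$ of \eqref{funct:SKdVp} constrained to $\mathcal{N}^+_\nu$, so that the resulting solution is nonnegative and, by the strong maximum principle in $\mathbb{R}^N\setminus\{0\}$, positive; I treat alternative i) and obtain ii) by interchanging the roles of $(u,\lambda_1,\alpha)$ and $(v,\lambda_2,\beta)$. Write $m_j=\tfrac1N\mathcal{S}^{\frac N2}(\lambda_j)$, so that by \eqref{Jzeta} one has $\mathcal{J}_\nu(z_\mu^{\lambda_1},0)=m_1$ and $\mathcal{J}_\nu(0,z_\mu^{\lambda_2})=m_2$, and since $\lambda_2>\lambda_1$ with $\mathcal{S}(\cdot)$ decreasing, $m_2<m_1$. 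The first step is to read the separability hypothesis \eqref{lamdasalphabeta} through \eqref{Slambda}: raising it to the power $\tfrac{N-1}{2}$ and using $\mathcal{S}^{\frac N2}(\lambda)=\big(\tfrac{\Lambda_N-\lambda}{\Lambda_N}\big)^{\frac{N-1}{2}}\mathcal{S}^{\frac N2}$, condition \eqref{lamdasalphabeta} is seen to be equivalent to
\[ \mathcal{S}^{\frac N2}(\lambda_1)<2\,\mathcal{S}^{\frac N2}(\lambda_2), \qquad\text{that is,}\qquad m_1<2m_2 .\]

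Next I set up the geometry on $\mathcal{N}^+_\nu$. Since $\alpha\ge 2$, Proposition~\ref{thmsemitrivialalphabeta}(i) gives, for $\nu$ small, that $(0,z_\mu^{\lambda_2})$ is a strict local minimum of $\mathcal{J}^+_\nu$ at level $m_2$, while the other semi-trivial solution $(z_\mu^{\lambda_1},0)$ sits at the higher level $m_1$. I connect them by the curve obtained by projecting $(\cos\theta\, z_\mu^{\lambda_1},\sin\theta\, z_\mu^{\lambda_2})$, $\theta\in[0,\tfrac\pi2]$, onto $\mathcal{N}^+_\nu$ via the unique dilation from \eqref{normH}, and set
\[ c_\nu=\inf_{\gamma\in\Gamma}\max_{t\in[0,1]}\mathcal{J}^+_\nu(\gamma(t)),\qquad \Gamma=\big\{\gamma\in C([0,1],\mathcal{N}^+_\nu):\gamma(0)=(0,z_\mu^{\lambda_2}),\ \gamma(1)=(z_\mu^{\lambda_1},0)\big\}.\]
For the upper bound I evaluate $\mathcal{J}_\nu$ along this test curve. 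In the decoupled limit $\nu=0$, using \eqref{normcrit}, the energy reduces to the one--variable profile
\[ E(x)=\frac1N\,\frac{\big(x\,\mathcal{S}^{\frac N2}(\lambda_1)+(1-x)\,\mathcal{S}^{\frac N2}(\lambda_2)\big)^{\frac N2}}{\big(x^{\frac{N}{N-2}}\mathcal{S}^{\frac N2}(\lambda_1)+(1-x)^{\frac{N}{N-2}}\mathcal{S}^{\frac N2}(\lambda_2)\big)^{\frac{N-2}{2}}},\qquad x\in[0,1],\]
whose interior critical point $x=\tfrac12$ yields the clean identity $E(\tfrac12)=m_1+m_2$, and one checks this is the maximum, with $E(1)=m_1$ and $E(0)=m_2$. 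As the coupling term enters $\mathcal{J}^+_\nu$ with a negative sign, the maximum of $\mathcal{J}^+_\nu$ along the projected curve is strictly below $m_1+m_2$ for $\nu>0$; hence $c_\nu<m_1+m_2$.

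The decisive and hardest step is the matching lower bound, namely that $c_\nu$ cannot collapse to the saddle level. I expect to prove $\displaystyle\liminf_{\nu\to0^+}c_\nu\ge m_1+m_2$ by a concentration--compactness analysis of Mountain--Pass sequences: as $\nu\to 0$ the coupling vanishes and any admissible path must, at some crossover parameter, carry a full $u$--bubble and a full $v$--bubble simultaneously, so that the pass cannot cost less than $m_1+m_2$ in the limit. In particular $c_\nu>2m_2$ once $\nu$ is below a threshold, because $2m_2<m_1+m_2$. This is exactly where \eqref{lamdasalphabeta} enters: the equivalent inequality $m_1<2m_2$ converts $c_\nu>2m_2$ into $c_\nu>m_1$, placing $c_\nu$ strictly above both semi-trivial levels and inside the admissible window, $2m_2<c_\nu<m_1+m_2$. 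The main technical obstacle is precisely this sharp lower estimate, which must be carried out quantitatively; the negative sign of the coupling gives the upper bound essentially for free, whereas the lower bound requires ruling out cheaper passes.

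Finally I extract the solution. The interval $(2m_2,m_1+m_2)$ contains no multiple $\ell m_2$ with $\ell\ge1$ (the next one, $3m_2$, exceeds $m_1+m_2$ precisely because $m_1<2m_2$), so $c_\nu$ meets both \eqref{PS1} and \eqref{PS2}. Hence the Palais--Smale condition holds at level $c_\nu$: by Lemma~\ref{lemmaPS1} in the subcritical range $2<\alpha+\beta<2^*$, and by Lemma~\ref{lemcritic} (for $h$ radial) or Lemma~\ref{lemcritic2} (for $\nu$ small) in the critical range $\alpha+\beta=2^*$. The Mountain--Pass theorem then yields a critical point $(\tilde u,\tilde v)$ of $\mathcal{J}^+_\nu$ with $\mathcal{J}^+_\nu(\tilde u,\tilde v)=c_\nu$. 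Since $c_\nu>m_1>m_2$ and the only semi-trivial critical levels are $m_1$ and $m_2$, the pair $(\tilde u,\tilde v)$ cannot be semi-trivial and is therefore a genuine bound state; replacing it by $(|\tilde u|,|\tilde v|)$ and invoking the strong maximum principle in $\mathbb{R}^N\setminus\{0\}$ gives $\tilde u,\tilde v>0$. Taking $\tilde\nu$ to be the smallest of the thresholds arising in the local--minimum property, the compactness lemmas, and the lower estimate for $c_\nu$ completes case i); case ii) follows verbatim after exchanging the two components.
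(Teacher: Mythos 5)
Your overall architecture coincides with the paper's: produce the bound state as a Mountain--Pass critical point of $\mathcal{J}^+_\nu$ on $\mathcal{N}^+_\nu$, squeeze the min--max level $c_\nu$ into the window where the compactness lemmas apply, then conclude via the Mountain--Pass theorem and the maximum principle in $\R^N\setminus\{0\}$. Writing, as you do, $m_j=\frac1N\mathcal{S}^{\frac N2}(\lambda_j)$, your translation of \eqref{lamdasalphabeta} into $m_1<2m_2$ is correct; your upper bound $c_\nu<m_1+m_2$ along the projected segment joining the two semi-trivial couples is correct and is exactly the paper's Step 2 (your identity $E(\tfrac12)=m_1+m_2$ is the paper's computation that $g$ peaks at $t=\tfrac12$ with value $\frac1N\big(\mathcal{S}^{\frac N2}(\lambda_1)+\mathcal{S}^{\frac N2}(\lambda_2)\big)$, a two-term H\"older inequality); and your check that $(2m_2,\,m_1+m_2)$ contains no multiple $\ell m_2$, so that \eqref{PS1} and \eqref{PS2} hold, matches the paper's endgame.

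The gap is precisely the step you flag as decisive: the lower bound on $c_\nu$. You propose to prove $\liminf_{\nu\to0^+}c_\nu\ge m_1+m_2$ ``by a concentration--compactness analysis of Mountain--Pass sequences,'' asserting that any admissible path must at some parameter ``carry a full $u$--bubble and a full $v$--bubble.'' This is a heuristic, not an argument: points along a path are arbitrary elements of $\mathcal{N}^+_\nu$, not Palais--Smale sequences, so no bubbling decomposition applies to them; and one cannot invoke PS machinery at level $c_\nu$ before knowing that $c_\nu$ lies in the admissible window---which is exactly what the lower bound is meant to establish, so the reasoning is circular. The paper's mechanism is elementary and path-wise: for any path $\psi$ set $\sigma_j(t)=\int_{\R^N}(\psi_j^+(t))^{2^*}dx$; by the intermediate value theorem there is $\tilde t$ with $\sigma_1(\tilde t)=\sigma_2(\tilde t)=\tilde\sigma$; the Nehari identity \eqref{MPgeomp1}, the quotients \eqref{Slambda} and H\"older \eqref{MPgeomp4} give $\big(\mathcal{S}(\lambda_1)+\mathcal{S}(\lambda_2)\big)\tilde\sigma^{\frac{N-2}{N}}\le 2\tilde\sigma+\nu(\alpha+\beta)\|h\|_{L^\infty}\tilde\sigma^{\frac{\alpha+\beta}{2}\frac{N-2}{N}}$, and Lemma~\ref{algelemma} (this is where $\alpha+\beta\ge2$ enters) yields $\tilde\sigma\ge(1-\varepsilon)\big(\tfrac{\mathcal{S}(\lambda_1)+\mathcal{S}(\lambda_2)}{2}\big)^{\frac N2}$ uniformly for $0<\nu\le\tilde\nu$. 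Hence $c_\nu\ge\frac2N(1-\varepsilon)\big(\tfrac{\mathcal{S}(\lambda_1)+\mathcal{S}(\lambda_2)}{2}\big)^{\frac N2}>2m_2>m_1$, which is weaker than your claimed $m_1+m_2$ but is all that is needed; some such quantitative algebraic argument must replace the bubble heuristic (your stronger claim is in fact reachable by the same device, crossing the ray through $(\mathcal{S}^{\frac N2}(\lambda_1),\mathcal{S}^{\frac N2}(\lambda_2))$ instead of the diagonal, but you prove neither). A secondary slip: in the critical range you offer Lemma~\ref{lemcritic2} as an alternative compactness tool, but that lemma only covers levels satisfying \eqref{hyplemmaPS2}, i.e.\ $c<m_2$, far below $c_\nu$; under \eqref{alternative} the critical case has $h$ radial and only Lemma~\ref{lemcritic} is applicable at the Mountain--Pass level.
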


\begin{proof}
We will prove the result only for assumption i), since the proof under assumption ii) follows analogously. The proof is divided into two steps. First, we prove that the energy functional  $\mathcal{J}_\nu^+\Big|_{\mathcal{N}^+_\nu} $ admits a Mountain--pass--geometry. Secondly, we show that the PS condition is guaranteed for the Mountain--Pass level. As a consequence, we can deduce the existence of $(\tilde{u},\tilde{v})\in\mathbb{D}$ which is a critical point of $\mathcal{J}_\nu^+$ and, therefore, a bound state of \eqref{system:alphabeta}.

\textbf{Step 1:}\hfill\newline
Let us define the set of paths that connects $(z_{\mu}^{\lambda_1},0)$ to $(0,z_{\mu}^{\lambda_2})$ continuously, namely
$$
\Psi_\nu = \left\{ \psi(t)=(\psi_1(t),\psi_2(t))\in C^0([0,1],\mathcal{N}^+_\nu): \, \psi(0)=(z_1^{\lambda_1},0) \mbox{ and } \, \psi(1)=(0,z_1^{\lambda_2})\right\},
$$
and the Mountain--Pass level
\begin{equation*}
c_{MP} = \inf_{\psi\in\Psi_\nu} \max_{t\in [0,1]} \mathcal{J}^+_{\nu} (\psi(t)).
\end{equation*}
Assumption \eqref{lamdasalphabeta} implies that
$$
\frac{2}{N} \mathcal{S}^{\frac N2}(\lambda_2) > \frac{1}{N} \mathcal{S}^{\frac N2}(\lambda_1).
$$
Moreover, by continuity and monotonicity of $\mathcal{S}(\lambda)$, we can take $\varepsilon>0$ small enough such that
\beq\label{MPgeomp0}
\frac{2}{N}(1-\varepsilon)\left( \frac{\mathcal{S}(\lambda_1)+\mathcal{S}(\lambda_2)}{2}\right)^{\frac{N}{2}}> \frac{2}{N} \mathcal{S}^{\frac{N}{2}}(\lambda_2)>\frac{1+\varepsilon}{N} \mathcal{S}^{\frac{N}{2}}(\lambda_1).
\eeq

Next, we claim that there exists $\tilde{\nu}=\tilde{\nu}(\varepsilon)>0$ such that, for every $0<\nu<\tilde{\nu}$,
\beq\label{claim1}
\max_{t\in[0,1]} \mathcal{J}_\nu^+ (\psi(t)) \ge \frac{2}{N}(1-\varepsilon)\left( \frac{\mathcal{S}(\lambda_1)+\mathcal{S}(\lambda_2)}{2}\right)^{\frac{N}{2}} \qquad \mbox{ with } \psi \in \Psi_\nu.
\eeq

Take $\psi=(\psi_1,\psi_2) \in \Psi_\nu$, then by the identity \eqref{Nnueqp}, we obtain that
\begin{equation}\label{MPgeomp1}
\begin{split}
 \int_{\mathbb{R}^N} &\left( |\nabla \psi_1(t)|^2 + |\nabla \psi_2(t)|^2  \right) \, dx -\lambda_1 \int_{\mathbb{R}^N} \dfrac{\psi_1^2(t)}{|x|^2}  \, dx   - \lambda_2 \int_{\mathbb{R}^N} \dfrac{\psi_2^2(t)}{|x|^2} \, dx  \vspace{0.3cm}\\
&= \int_{\mathbb{R}^N} \left( (\psi_1^+(t))^{2^*} + (\psi_2^+(t))^{2^*}  \right) \, dx  + \nu(\alpha+\beta) \int_{\mathbb{R}^N} h(x) (\psi_1^+(t))^\alpha (\psi_2^+(t))^\beta \, dx  ,
\end{split}
\end{equation}
and using \eqref{Nnueqp1}
\beq\label{MPgeomp2}
\begin{split}
\mathcal{J}^+_{\nu} (\psi(t)) =& \frac{1}{N} \left( \int_{\mathbb{R}^N} (\psi_1^+(t))^{2^*} + (\psi_2^+(t))^{2^*} \, dx \right)\\
  &+ \nu\left(\frac{\alpha+\beta-2}{2}\right) \int_{\mathbb{R}^N} h(x)  \,  (\psi_1^+(t))^\alpha (\psi_2^+(t))^\beta \, dx.
\end{split}
\eeq

Let us define $\sigma(t)=\left(\sigma_1(t),\sigma_2(t)\right)$ where $\sigma_j(t)=\int_{\mathbb{R}^N} (\psi_j^+(t))^{2^*} \, dx$ for $j=1,2$. Without loss of generality, let us suppose that $\sigma_j(t)\leq 2 \mathcal{S}^{\frac{N}{2}}(\lambda_1)$ for $t\in[0,1]$ and $j=1,2$. Otherwise, \eqref{claim1} is done.

On the other hand, by the definition of $\mathcal{S}(\lambda)$, from \eqref{MPgeomp1} we get
\begin{equation}\label{MPgeomp3}
\begin{split}
\mathcal{S}(\lambda_1)(\sigma_1(t))^{\frac{N-2}{N}}+\mathcal{S}(\lambda_2)(\sigma_2(t))^{\frac{N-2}{N}}
\leq& \int_{\mathbb{R}^N} \left( |\nabla \psi_1(t)|^2 + |\nabla \psi_2(t)|^2  \right) dx\\
&-\lambda_1 \int_{\mathbb{R}^N} \dfrac{\psi_1^2(t)}{|x|^2} \, dx  - \lambda_2 \int_{\mathbb{R}^N} \dfrac{\psi_2^2(t)}{|x|^2} \, dx \\
=&\sigma_1(t)+\sigma_2(t)\\
&+\nu (\alpha+\beta) \int_{\mathbb{R}^N} h(x) (\psi_1^+(t))^\alpha (\psi_2^+(t))^\beta dx.
\end{split}
\end{equation}

By H\"older's inequality, one can bound the last integral as
\begin{equation}\label{MPgeomp4}
\int_{\mathbb{R}^N} h(x) (\psi_1^+(t))^\alpha (\psi_2^+(t))^\beta \, dx \leq \nu \|h\|_{L^{\infty}}  (\sigma_1(t))^{\frac{\alpha}{2}\frac{N-2}{N}} (\sigma_2(t))^{\frac{\beta}{2}\frac{N-2}{N}}. \end{equation}

From the definition of $\psi$, we know that
$$
\sigma(0)=\left(\int_{\mathbb{R}^N} (z_1^{\lambda_1})^{2^*} \, dx,0\right) \quad \mbox{ and } \quad \sigma(1)=\left(0,\int_{\mathbb{R}^N} (z_1^{\lambda_2})^{2^*} \, dx \right).$$
Then by continuity of $\sigma$, there exists $\tilde{t}\in(0,1)$ such that $\sigma_1(\tilde{t})=\tilde{\sigma}=\sigma_2(\tilde{t})$. Taking $t=\tilde{t}$ in inequality \eqref{MPgeomp3} and applying \eqref{MPgeomp4}, we have that
$$
(\mathcal{S}(\lambda_1)+\mathcal{S}(\lambda_2)) \tilde{\sigma}^{\frac{N-2}{N}} \leq 2 \tilde{\sigma} + \nu (\alpha+\beta) \tilde{\sigma}^{\frac{\alpha+\beta}{2}\frac{N-2}{N}}.
$$

Since $\alpha+\beta>2$, Lemma~\ref{algelemma} ensures the existence of $\tilde{\nu}$ which depends on $\varepsilon$ such that
\begin{equation}\label{MPgeomp5}
\tilde{\sigma}\ge (1-\varepsilon)  \left(\frac{\mathcal{S}(\lambda_1)+\mathcal{S}(\lambda_2)}{2}\right)^{\frac{N}{2}} \qquad \mbox{ for every } 0<\nu\le\tilde{\nu}.
\end{equation}

As a result, from \eqref{MPgeomp2} and \eqref{MPgeomp5} we deduce that
$$
\max_{t\in[0,1]} \mathcal{J}^+_\nu(\psi(t)) \ge \frac{ \sigma_1(t)+ \sigma_2(t)}{N}  \ge\frac{2(1-\varepsilon) }{N} \left(\frac{\mathcal{S}(\lambda_1)+\mathcal{S}(\lambda_2)}{2}\right)^{\frac{N}{2}},
$$
which proves claim \eqref{claim1}. Moreover, by \eqref{MPgeomp0} and \eqref{claim1}, one can state that
\begin{equation}\label{MPgeomp6}
c_{MP}>\frac{(1+\varepsilon)}{N} \mathcal{S}^{\frac{N}{2}}(\lambda_1)=(1+\varepsilon)\mathcal{J}^+_\nu(z_1^{\lambda_1},0),
\eeq
which implies that the energy functional $\mathcal{J}^+_\nu$ exhibits a Mountain--Pass--geometry on $\mathcal{N}_\nu$.

\

\textbf{Step 2:}\hfill\newline Let us consider $\psi(t) =(\psi_1(t),\psi_2(t))=\left((1-t)^{1/2} z_1^{\lambda_1},t^{1/2}z_1^{\lambda_2} \right)$ for $t\in[0,1]$. By the definition of the Nehari manifold, there exists a positive function $\gamma:[0,1]\to(0,+\infty)$ such that the $\gamma \psi \in \mathcal{N}_\nu^+\cap \mathcal{N}_\nu$ for  $t\in[0,1]$. Observe that $\gamma(0)=\gamma(1)=1$.\newline
As above, let us define
$$
\sigma(t)=(\sigma_1(t),\sigma_2(t))=\left(\int_{\mathbb{R}^N}\left( \gamma \psi_1(t)\right)^{2^*} \, dx , \int_{\mathbb{R}^N} \left(\gamma \psi_2(t)\right)^{2^*} \, dx \right).
$$
By \eqref{normcrit}, we have that
\beq\label{Mpgeom7}
\sigma_1(0)=\int_{\mathbb{R}^N} (z_1^{\lambda_1})^{2^*} = \mathcal{S}^{\frac{N}{2}}(\lambda_1) \quad \mbox{ and }\quad \sigma_2(1)=\int_{\mathbb{R}^N} (z_1^{\lambda_2})^{2^*} = \mathcal{S}^{\frac{N}{2}}(\lambda_2).
\eeq
Due to $\gamma\psi(t)\in\mathcal{N}^+_\nu\cap \mathcal{N}_\nu$, using \eqref{normH}, we get
\begin{equation*}
\begin{split}
\left\|\left((1-t)^{1/2} z_1^{\lambda_1},t^{1/2}z_1^{\lambda_2} \right)\right\|^2_\mathbb{D} &= \gamma^{2^*-2}(t) \left((1-t)^{2^*/2} \sigma_1(0) + t^{2^*/2} \sigma_2(1) \right) \\
&\mkern+20mu + \nu (\alpha+\beta)\gamma (t)(1-t)t^{1/2} \int_{\mathbb{R}^N} h(x) (z_1^{\lambda_1})^\alpha (z_1^{\lambda_2})^{\beta}dx,
\end{split}
\end{equation*}
and, therefore,
\beq\label{gammabound}
\gamma^{2^*-2}(t)< \dfrac{\left|\left|\left( \psi_1(t), \psi_2(t) \right)\right|\right|^2_\mathbb{D}}{\int_{\mathbb{R}^N} (\psi_1(t))^{2^*}+(\psi_2(t))^{2^*} \, dx}= \dfrac{(1-t) \sigma_1(0) + t \sigma_2(1)}{(1-t)^{2^*/2} \sigma_1(0) + t^{2^*/2} \sigma_2(1)},
\eeq
for every $t\in(0,1)$. On the other hand, by definition of $\gamma$, \eqref{gammabound} and \eqref{Nnueq}, it follows that
\begin{equation}\label{Jgammapsibound}
\begin{split}
\mathcal{J}_\nu^+(\gamma \psi(t)) =& \left( \frac{1}{2}-\frac{1}{\alpha+\beta} \right)\|\gamma\psi(t) \|^2_\mathbb{D}\\
&+ \left(\frac{1}{\alpha+\beta}- \frac{1}{2^*} \right) \gamma^{2^*}(t) \left(\int_{\mathbb{R}^N} (\psi_1(t))^{2^*}+(\psi_2(t))^{2^*}  \, dx \right) \vspace{0.3cm} \\
 =&\ \gamma^2(t)  \left( \frac{1}{2}-\frac{1}{\alpha+\beta} \right)  \left[(1-t) \sigma_1(0) + t \sigma_2(1) \right] \\
 &+ \left(\frac{1}{\alpha+\beta}- \frac{1}{2^*} \right) \gamma^{2^*}(t) \left[ (1-t)^{2^*/2} \sigma_1(0) + t^{2^*/2} \sigma_2(1)\right]\\
<& \frac{\gamma^2(t)}{N} \left[(1-t) \sigma_1(0) + t \sigma_2(1) \right] .
\end{split}
\end{equation}
Then, by \eqref{gammabound} and \eqref{Jgammapsibound}, we deduce that, for $0<t<1$,
$$
\mathcal{J}_\nu^+(\gamma \psi(t)) < g(t)\vcentcolon= \dfrac{(1-t) \sigma_1(0) + t \sigma_2(1)}{N}  \left[ \dfrac{(1-t) \sigma_1(0) + t \sigma_2(1)}{(1-t)^{2^*/2} \sigma_1(0) + t^{2^*/2} \sigma_2(1)} \right]^{\frac{N-2}{2}}.
$$
Since function $g(t)$ attains its maximum at $t=\frac{1}{2}$ and due to \eqref{Mpgeom7}
$$
g\left(\frac{1}{2}\right)= \dfrac{ \sigma_1(0) + \sigma_2(1)}{N}=\dfrac{\mathcal{S}^{\frac{N}{2}}(\lambda_1)+\mathcal{S}^{\frac{N}{2}}(\lambda_2)}{N},
$$
we conclude
$$
c_{MP} \leq \max_{t\in[0,1]} \mathcal{J}_\nu^+(\gamma \psi(t))< \dfrac{\mathcal{S}^{\frac{N}{2}}(\lambda_1)+\mathcal{S}^{\frac{N}{2}}(\lambda_2)}{N}.
$$
Finally, because of the separability condition \eqref{lamdasalphabeta} and \eqref{MPgeomp6}, if $\lambda_2>\lambda_1$ we have
$$
\frac{\mathcal{S}^{\frac{N}{2}}(\lambda_2)}{N}<\frac{\mathcal{S}^{\frac{N}{2}}(\lambda_1)}{N}<c_{MP}<\frac{1}{N}\left( \mathcal{S}^{\frac{N}{2}}(\lambda_1)+\mathcal{S}^{\frac{N}{2}}(\lambda_2) \right)<3\frac{\mathcal{S}^{\frac{N}{2}}(\lambda_2)}{N}.
$$
Thus, the energy level $c_{MP}$ satisfies the hypotheses of Lemma~\ref{lemmaPS1} and Lemma~\ref{lemcritic}. Then, by the Mountain--Pass Theorem, there exists a sequence $\left\{ (u_n,v_n) \right\} \subset \mathcal{N}^+_\nu$ such that
$$
\mathcal{J}^+(u_n,v_n ) \to c_\nu \qquad \mathcal{J}^+|_{\mathcal{N}
^+_\nu}(u_n,v_n ) \to 0.
$$
If $\alpha+\beta<2^*$, because of analogous versions of Lemmas~\ref{lemma:PSNehari} and \ref{lemmaPS1} for $\mathcal{J}^+_\nu$, we have $\left\{ u_n,v_n \right\} \to ( \tilde{u},\tilde{v} )$. Actually, $(\tilde{u},\tilde{v} )$ is a critical point of $\mathcal{J}_\nu$ on $\mathcal{N}_\nu$. It follows that is a critical point of $\mathcal{J}_\nu$ defined in $\mathbb{D}$. Moreover, $\tilde{u},\tilde{v} \ge 0$ in $\mathbb{R}^N$ and by maximum principle in $\mathbb{R}^N\setminus \{0\}$ we can conclude that they are strictly positive.
\
For assumptions ii), we would apply Lemma~\ref{lemmaPS1a} to guarantee the PS condition.

In case that $\alpha+\beta=2^*$, we follow the same approach for the compactness of the PS sequence, using Lemma~\ref{lemcritic}.

\end{proof}

\begin{remark}
Comparing to Theorem~3.8 and Theorem~4.2 in \cite{AbFePe}, which works in dimension $N=3$ and $N=3,4$ respectively, Theorem~\ref{MPgeom} works until dimension $5$. In some sense, the wider range of the exponents allows the dimension to be bigger.
\end{remark}

\

\begin{remark}Let us stress that:
\begin{itemize}
\item[i)] If $\lambda_1=\lambda_2$, then the \textit{non-semi-trivial solution} provided by Theorem~\ref{thm:lambdaground} is a ground state. As it is observed in \cite{AbFePe}, $\mathcal{J}_\nu$ admits a Mountain--Pass geometry. However, it is not clear if the PS condition is satisfied for the Mountain--Pass level given in Theorem~\ref{MPgeom}. See Remark~3.10 in \cite{AbFePe} for further details.

\item[ii)] If $\lambda_1\leq\lambda_2$ and

\begin{equation*}
2^\frac{-2}{N-1}>\frac{\Lambda_N-\lambda_2}{\Lambda_N-\lambda_1}.
\end{equation*}
It remains an open problem the geometry of the energy functional and if there exists other solutions apart from the semi-trivial solutions and the ground states for $\nu$ small enough.
\end{itemize}
\end{remark}

\vspace{1cm}

\begin{center}{\bf Acknowledgements}\end{center} This work has been supported by the Madrid Government (Comunidad de Madrid-Spain) under the Multiannual Agreement with UC3M in the line of Excellence of University Professors (EPUC3M23), and in the context of the V PRICIT (Regional Programme of Research and Technological Innovation).\\ The authors are partially supported by the Ministry of Economy and Competitiveness of Spain, under research project PID2019-106122GB-I00.

\end{document}